\newcommand{\R}{{\mathbb R}}
\newcommand{\tu}{\tilde{u}}
\numberwithin{equation}{section}
\newtheorem{theorem}{Theorem}[section]
\newtheorem{proposition}[theorem]{Proposition}
\newtheorem{lemma}[theorem]{Lemma}
\newtheorem{corollary}[theorem]{Corollary}
\newtheorem{definition}[theorem]{Definition}
\newtheorem{remark}[theorem]{Remark}
\newtheorem*{basic}{Condition (PE)}
\theoremstyle{definition}
\newcommand{\brm}{\begin{remark}\rm}
\newcommand{\erm}{\end{remark}}
\newcommand{\brms}{\begin{remark}\rm}
\newcommand{\erms}{\end{remark}}
\newcommand{\bte}{\begin{theorem}}
\newcommand{\ete}{\end{theorem}}
\newcommand{\bpr}{\begin{proposition}}
\newcommand{\epr}{\end{proposition}}
\newcommand{\ble}{\begin{lemma}}
\newcommand{\ele}{\end{lemma}}
\newcommand{\beq}{\begin{equation}}
\newcommand{\eeq}{\end{equation}}
\newcommand{\bdm}{\begin{displaymath}}
\newcommand{\edm}{\end{displaymath}}
\numberwithin{equation}{section}
\newcommand{\bos}{\begin{remark}\rm}
\newcommand{\eos}{\end{remark}}
\newcommand{\ben}{\begin{enumerate}}
\newcommand{\een}{\end{enumerate}}
\newcommand{\n }{\nabla }
\newcommand{\be}{\begin{equation}}
\newcommand{\ee}{\end{equation}}
\title[Monotonicity  in half-spaces]{Monotonicity of solutions of
 quasilinear degenerate elliptic equation in half-spaces}
\author[A.\ Farina]{Alberto Farina$^+$}
\address{Universit\'e de Picardie Jules Verne
\newline\indent
LAMFA, CNRS UMR 6140\newline\indent
Amiens, France}
\email{alberto.farina@u-picardie.fr}
\author[L.\ Montoro]{Luigi Montoro$^*$}
\address{Dipartimento di Matematica
\newline\indent
Universit\`a della Calabria
\newline\indent
Ponte Pietro Bucci 31B, I-87036 Arcavacata di Rende, Cosenza, Italy}
\email{montoro@mat.unical.it}
\author[B.\ Sciunzi]{Berardino Sciunzi$^*$}
\address{Dipartimento di Matematica
\newline\indent
Universit\`a della Calabria
\newline\indent
Ponte Pietro Bucci 31B, I-87036 Arcavacata di Rende, Cosenza, Italy}
\email{sciunzi@mat.unical.it}
\thanks{\it 2000 Mathematics Subject
 Classification: 35B05,35B65,35J70}
\thanks{$^+$Universit\'e de Picardie Jules Verne,
LAMFA, CNRS UMR 7352,
Amiens, France,
E-mail:{\em alberto.farina@u-picardie.fr}}
\thanks{$^*$Dipartimento di Matematica,
Universit\`a della Calabria,
Ponte Pietro Bucci 31B, I-87036 Arcavacata di Rende, Cosenza, Italy,
E-mail: {\em montoro@mat.unical.it}, {\em sciunzi@mat.unical.it}}
\thanks{AF and BS  were partially supported by ERC-2011-grant: \emph{Elliptic PDE's and symmetry of interfaces and layers for odd nonlinearities.}}
\thanks{LM and BS  were partially supported by PRIN-2011: {\em Variational and Topological Methods in the Study of Nonlinear Phenomena}}
\begin{document}

\begin{abstract}
We prove a weak comparison principle in narrow unbounded domains   for   solutions to $-\Delta_p u=f(u)$ in the case $2<p< 3$ and $f(\cdot)$ is a power-type nonlinearity, or in the case $p>2$ and $f(\cdot)$ is super-linear.
 We exploit it to prove  the monotonicity of positive  solutions to $-\Delta_p u=f(u)$ in half spaces (with zero Dirichlet assumption) and therefore to prove some Liouville-type theorems.
\end{abstract}

\maketitle

\tableofcontents

%%%%%%%%%%%%%%%%%%%%%%%%%%%%%%%%%%%%%%
\medskip
\section{Introduction and statement of the  main results.}\label{introdue}

In this paper we consider the problem
\begin{equation}\label{E:P}
\begin{cases}
-\Delta_p u=-{\hbox {\rm div}} (|\nabla u|^{p-2} \nabla u )=f(u), & \text{ in }\mathbb{R}^N_+\\
u(x',y) \geqslant 0, & \text{ in } \mathbb{R}^N_+\\
u(x',0)=0,&  \text{ on }\partial\mathbb{R}^N_+
\end{cases}
\end{equation}
where $N \geq 2$ and we denote a generic point belonging to $\mathbb{R}^N_+$ by $(x',y)$ with $x'=(x_1,x_2, \ldots, x_{N-1})$ and $y=x_N$. It is well known that solutions of $p$-Laplace equations are generally of class $C^{1,\alpha}$ (see \cite{Di,Li,T}), and the equation has to be understood in the weak sense. \\

Our aim is to study monotonicity properties of the solutions. The crucial point to achieve  such a result is to obtain
weak comparison principles in narrow domains. These in fact allow to exploit  the Alexandrov-Serrin moving plane method \cite{A,S,GNN,BN}.

We refer the readers to \cite{BCN1,BCN2,BCN3,Dan1,Dan2,Fa,DaGl,FV2,QS}  for previous results concerning  monotonicity of the solutions
in half-spaces, in the non-degenerate case.
In particular we refer to the founding  papers of H.~Berestycki, L.~Caffarelli and L.~Nirenberg that influenced and inspired the subsequent literature, and also to the papers of E. Dancer.\\

\noindent When considering the $p$-Laplace operator, some results in this direction  have been obtained by the authors in \cite{FMS} for the case $\frac{2N+2}{N+2}<p<2$ (the case $p=2$ being well studied).\\

\noindent This paper is devoted to the case $p>2$ that turns out to be very much more complicated.
We will prove our results assuming one of the following:

\begin{itemize}
\item[($f_1$)]
 $f\in C^1(\mathbb{R}^+\cup \{0\})\cap C^2(\mathbb{R}^+)$ and, given $\mathcal{M}>0$, there exist $a=a(\mathcal{M})>0$ and $A=A(\mathcal{M})>0$ such that
\[
a\, s^q\leq f(s)\leq A\, s^q\qquad \text{and } \qquad  \big |f'(s)\big |\leq A \,s^{q-1}\qquad\qquad \text{ in} \,\, [0\,,\, \mathcal{M}]
\]
for some $q> p-1$.  In the case $p-1<q<2$ we further assume that there exists a constant $\tilde A>0$ such that, for any $0<t<s$, it follows:
 \[
 \frac{f(s)-f(t)}{s^q-t^q}\leq \tilde A\,.
 \]

\item[($f_2$)] The nonlinearity $f$ is \emph{positive} ($f(s)>0$ for $s>0$) and Locally Lipschitz continuous, with
 \[
 f(s)\geq c_f s^{p-1}\qquad \text{in}\quad [0\,,\,s_0]
 \]
 for some $s_0>0$ and some positive constant $c_f$.
\end{itemize}

\noindent Nonlinearities that satisfy  $(f_1)$ are referred to as power-type nonlinearities. As examples of nonlinearities that satisfy ($f_2$) one can consider exponential nonlinearities or  nonlinearities like $f(s)=(1+s)^q$, or super-linear power nonlinearities.\\

\noindent It is convenient to resume our assumptions as follows:\\

\begin{itemize}
\item[($H_1$)] The nonlinearity $f$ satisfies ($f_1$), and  $2<p<3$.
\item[($H_2$)] The nonlinearity $f$ satisfies ($f_2$), and  $p> 2$\,.
\end{itemize}
\begin{theorem}\label{th:wcpstrip}
Let   $u \in C^{1, \alpha}_{loc}(\Sigma_{(\lambda, \beta)})$ and $ v \in C^{1, \alpha}_{loc}(\Sigma_{(\lambda-2\bar\delta, \beta+2\bar\delta)})$ satisfy  $u,\nabla u \in L^{\infty}(\Sigma_{(\lambda, \beta)})$ and $ v,\nabla v \in L^{\infty}(\Sigma_{(\lambda-2\bar\delta, \beta+2\bar\delta)})$, where $\bar \delta>0$, $\Sigma_{(\lambda, \beta)}:= \left\{ \mathbb{R}^{N-1}\times [\lambda, \beta]\right \}$ and $0\leq\lambda<\beta$.
Assume that ($H_1$) holds and let $u$ be non-negative and $v$ be positive such that:
\begin{equation}\label{Eq:WCP}
\begin{cases}
-\Delta_p u= f(u) & \text{ in }\Sigma_{(\lambda, \beta)},\\
- \Delta_p v= f(v) & \text{ in }\Sigma_{(\lambda-2\bar\delta, \beta+2\bar\delta)},\\
\qquad u\leq v & \text{ on } \partial \Sigma_{(\lambda, \beta)}.
\end{cases}
\end{equation}
Assume furthermore that
 there exists a constant $C=C(p,u,v,f,N)>0$ such that, for any $x_0'\in\mathbb{R}^{N-1}$, it follows
\begin{equation}\label{condcrucial}
\begin{split}
&\int_{\mathcal{K}(x'_0)} \frac{1}{|\nabla v|^{\tau}}\frac{1}{|x-y|^\gamma}\leq C\,\beta^{(N+\tau-2p-\gamma)}\,v_0^{2p-2q-2-\tau}
\end{split}
\end{equation}
where $v_0= v(x'_0,\frac{\beta+\lambda}{2})$ and
 $\mathcal{K}(x'_0)$ is defined by\footnote{Note that here $B_r(x')$ is the ball in $\mathbb{R}^{N-1} $of radius $r$ centered at $x'$.} $ \mathcal {K}(x'_0)= B_{(\beta-\lambda)\sqrt{N}}(x'_0) \times (\lambda, \beta)$,  $\gamma< N-2$ if $N\geq 3$, or $\gamma=0$ if $N=2$ and $\max\{(p-2)\, ,\,0\}\leqslant \tau<p-1$.

Then there exists $d_0=d_0(p,u,v, f,N)>0$ such that\footnote{ $d_0$ will actually depend on the Lipschitz constant $L_f$  of $f$ in the interval $[-\max\{\|u\|_\infty,\|v\|_\infty\} ,\max\{\|u\|_\infty,\|v\|_\infty\}]$. }  if,  $0~<~\beta-\lambda<~ d_0$, it follows that
\begin{equation}\label{gfgfdtrscvbnmzzxs}
u \leq v \qquad \text{ in } \Sigma_{(\lambda, \beta)}\,.
\end{equation}
On the other hand, if we assume that $f$ is any positive ($f(s)>0$ for $s>0$) locally Lipschitz nonlinearity and
$\lambda>\underline{\lambda}>0$ and
 $v\geq \underline{v}>0$ in $\Sigma_{(\lambda-2\bar\delta, \beta+2\bar\delta)}$, then \eqref{gfgfdtrscvbnmzzxs} follows for any $p>2$.

\end{theorem}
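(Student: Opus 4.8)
The plan is the classical route to weak comparison principles — subtract the two equations, test with the positive part of $u-v$ — but carried out with the degeneracy of $\Delta_p$ ($p>2$) kept under control by \eqref{condcrucial} (respectively, in the last assertion, by the strict positivity $v\ge\underline v>0$). Write $\Sigma:=\Sigma_{(\lambda,\beta)}$, $w:=u-v$ and $w^+:=\max\{w,0\}$; the goal is $w^+\equiv0$ on $\Sigma$. Fix $x'_0\in\mathbb R^{N-1}$ and take a cut-off $\psi=\psi(x')\ge0$, compactly supported in $B_{(\beta-\lambda)\sqrt N}(x'_0)\subset\mathbb R^{N-1}$, equal to $1$ on a fixed fraction of that ball and with $|\nabla\psi|\lesssim(\beta-\lambda)^{-1}$. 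Since $u\le v$ on $\partial\Sigma$ we have $w^+=0$ there, so $\varphi:=w^+\psi^2$ vanishes on $\partial\Sigma$ and is compactly supported in the $x'$-directions; hence it is admissible in the weak formulations of both equations, and its support meets $\{w>0\}$ only inside the bounded cylinder $\mathcal K(x'_0)$. This built-in localization is exactly why \eqref{condcrucial}, an integral over $\mathcal K(x'_0)$, is the natural hypothesis, and it is also what handles the unboundedness of the strip; at the end $x'_0$ is let free over $\mathbb R^{N-1}$.

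\emph{The coercivity inequality.} Subtracting the two weak formulations and testing with $\varphi$ gives
\[
\int_{\Sigma}\big\langle|\nabla u|^{p-2}\nabla u-|\nabla v|^{p-2}\nabla v,\ \nabla\varphi\big\rangle=\int_{\Sigma}\big(f(u)-f(v)\big)\varphi .
\]
On $\operatorname{supp}\varphi\cap\{w>0\}$ one has $u>v>0$ (and, in the last assertion, $u>v\ge\underline v>0$), so boundedness of $u,v$ and local Lipschitz continuity of $f$ give $f(u)-f(v)\le L_f\,w^+$. Expanding $\nabla\varphi=\psi^2\nabla w^++2\psi w^+\nabla\psi$, invoking the two inequalities valid for $p\ge2$,
\[
\big\langle|\xi|^{p-2}\xi-|\eta|^{p-2}\eta,\ \xi-\eta\big\rangle\ge c_1(|\xi|+|\eta|)^{p-2}|\xi-\eta|^2,\qquad \big|\,|\xi|^{p-2}\xi-|\eta|^{p-2}\eta\,\big|\le c_2(|\xi|+|\eta|)^{p-2}|\xi-\eta|,
\]
and absorbing the cross term by Young's inequality (where boundedness of $\nabla u,\nabla v$ is used), I obtain an inequality of the shape
\[
c\int_{\Sigma}\psi^2(|\nabla u|+|\nabla v|)^{p-2}|\nabla w^+|^2\ \le\ \frac{C}{(\beta-\lambda)^2}\int_{\mathcal K(x'_0)}(w^+)^2(|\nabla u|+|\nabla v|)^{p-2}\ +\ L_f\int_{\Sigma}\psi^2(w^+)^2 .
\]

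\emph{Weighted Poincaré and narrowness.} Since $w^+$ vanishes on $\{y=\lambda\}$, writing $w^+(x',y)=\int_\lambda^y\partial_s w^+\,ds$ and applying Cauchy–Schwarz after the splitting $\partial_sw^+=(|\nabla u|+|\nabla v|)^{-(p-2)/2}\cdot(|\nabla u|+|\nabla v|)^{(p-2)/2}\partial_sw^+$ produces a weighted Poincaré inequality on the narrow strip in which the degeneracy enters only through $\int(|\nabla u|+|\nabla v|)^{-(p-2)}$; because $\tau\ge\max\{p-2,0\}$ and $|\nabla u|+|\nabla v|\ge|\nabla v|$, this is controlled — via Hölder and the weight $|x-y|^{-\gamma}$ — by the left-hand side of \eqref{condcrucial}. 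Feeding \eqref{condcrucial} together with the power-type bounds on $f,f'$ from $(f_1)$ (which fix how $L_f$ and $v_0$ scale in $\beta$) back into the previous inequality, the whole right-hand side collapses to $\Theta(\beta-\lambda)\int_{\Sigma}\psi^2(|\nabla u|+|\nabla v|)^{p-2}|\nabla w^+|^2$; the precise exponents $N+\tau-2p-\gamma$ and $2p-2q-2-\tau$ in \eqref{condcrucial}, with $q>p-1$ (and the extra sub-quadratic clause of $(f_1)$ when $p-1<q<2$), are exactly what force $\Theta(\beta-\lambda)\to0$ as $\beta-\lambda\to0$. Choosing $d_0$ so that $\Theta(\beta-\lambda)<c$ for $0<\beta-\lambda<d_0$ kills the gradient term: $\nabla w^+=0$ a.e.\ on $\{\psi>0\}$, using that $\{\nabla v=0\}$ is Lebesgue–negligible for these solutions. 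Then $w^+\ge0$ is locally constant and vanishes on $\{y=\lambda\}$, hence $w^+\equiv0$ on the cylinder $\{\psi>0\}$; letting $x'_0$ run over $\mathbb R^{N-1}$ yields $u\le v$ on $\Sigma$, i.e.\ \eqref{gfgfdtrscvbnmzzxs}.

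\emph{The last assertion, and the main obstacle.} If $f>0$ is only locally Lipschitz but $v\ge\underline v>0$ on $\Sigma_{(\lambda-2\bar\delta,\beta+2\bar\delta)}$ and $\lambda>\underline\lambda>0$, then $-\Delta_p v=f(v)\ge\tilde m:=\min\{f(t):\underline v\le t\le\sup v\}>0$, so $v$ is a \emph{strict} $p$-supersolution on the larger strip. For such a $v$ one has a quantitative local summability estimate $\int_{\mathcal K(x'_0)}|\nabla v|^{-\tau}\le C\,(\beta-\lambda)^{\sigma}$, valid for every $\tau<p-1$ with some $\sigma>N-1$ and $C=C(p,N,\tilde m,\underline v,\sup v,\bar\delta)$ — the threshold $\tau<p-1$ being precisely the summability exponent of $|\nabla v|^{-\tau}$ across the (at worst codimension–one) critical set of a strict supersolution, as the radial $p$-torsion function already shows. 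This estimate plays the role of \eqref{condcrucial}, and the two preceding steps then go through verbatim for every $p>2$, again with $\Theta(\beta-\lambda)\to0$. The delicate point — the one I expect to occupy most of the work — is the interaction in those two steps between the weight $(|\nabla u|+|\nabla v|)^{p-2}$, which cannot be dropped from the coercivity inequality when $p>2$, and the cut-off and $f$ terms: reabsorbing that weight through the weighted Poincaré step is possible only with the sharp $\beta$-scaling built into \eqref{condcrucial}, and in the last assertion it requires first proving the uniform gradient–summability bound for strict $p$-supersolutions with the correct power of $\beta-\lambda$, which rests on local non-degeneracy and Calderón–Zygmund–type estimates for $\nabla v$. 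Everything else is the familiar moving-plane bookkeeping.
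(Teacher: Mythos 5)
Your outline misses the step that is the actual content of the theorem: how to close the estimate on an \emph{unbounded} strip. You localize with a cut-off $\psi$ supported at the fixed scale $\beta-\lambda$, so $|\nabla\psi|\sim(\beta-\lambda)^{-1}$ and the cross term produces $\tfrac{C}{(\beta-\lambda)^{2}}\int_{\mathcal K(x_0')}(w^+)^2(|\nabla u|+|\nabla v|)^{p-2}$. This term cannot be absorbed: it is not of the form $\int\psi^2(\cdot)$ (it lives on the annulus where $\nabla\psi\neq0$, where $w^+$ need not vanish), and even if one ignores that, the gain from the weighted Poincar\'e inequality on a cube of side $\beta-\lambda$ is only a \emph{small} positive power $(\beta-\lambda)^{s}$ (this is exactly what the delicate power counting in the paper yields), which is hopeless against the factor $(\beta-\lambda)^{-2}$. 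Consequently "letting $x_0'$ run over $\mathbb{R}^{N-1}$" does not upgrade your local inequality to $u\le v$; no comparison on the cylinder $\{\psi>0\}$ follows. The paper's proof is structured precisely to avoid this: the cut-off is taken at a horizontal scale $R\to\infty$ (so the cut-off term carries $R^{-2}$, not $(\beta-\lambda)^{-2}$), each cube is split into $\{u>R^{-m}\}$ (weight $|\nabla u|^{p-2}$, Poincar\'e constant blowing up like a power of $R^{m}$, compensated by $R^{-2}$ for $m$ small) and $\{u\le R^{-m}\}$ (weight $1$, contributing $O(R^{N-1-2-m(\alpha-1)})$ after choosing the exponent $\alpha$ in the test function $[(u-v)^+]^{\alpha}\varphi_R^2$ large), and the conclusion comes from the iteration $\mathcal L(R)\le\theta\,\mathcal L(2R)+g(R)$ with $\theta<2^{-N}$ and Lemma \ref{Le:L(R)}. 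None of this architecture (power $\alpha$, splitting in $R^{-m}$, iteration in $R$) appears in your proposal, and without it the argument does not close.

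There are two further gaps. First, you estimate $f(u)-f(v)\le L_f\,w^+$ with a fixed Lipschitz constant and then claim the resulting term is absorbed via \eqref{condcrucial}; but the weighted Poincar\'e constant produced by \eqref{condcrucial} degenerates like a negative power of $v_0$ where $v$ is small, and the only compensation is the power-type decay $|f'(v)|\le A\,v^{q-1}$ (Taylor expansion, the two cases $q\ge2$ and $p-1<q<2$, Harnack on each cube to replace local sup/inf of $v$ by $v_0^i$). The bookkeeping $q-1+[2p-2-(p-1)r-2q]\tfrac{p-2}{(p-1)r}>0$ and $\tfrac{2}{p-1}>p-2$ is exactly where the hypothesis $2<p<3$ enters; your sketch never identifies where $p<3$ is used, and with the crude $L_f$ bound it is not usable at all. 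Moreover, your route to the weighted Poincar\'e inequality (Cauchy--Schwarz along vertical segments) would require pointwise-in-$x'$ control of $\int|\nabla v|^{-(p-2)}\,dy$ along lines, which \eqref{condcrucial} does not give: it controls an $N$-dimensional integral against the Riesz kernel $|x-y|^{-\gamma}$, and the paper needs the potential-estimate based Sobolev/Poincar\'e inequality of Section \ref{section4}, with the two-weight splitting, to exploit it. Second, for the last assertion you posit an unproved quantitative bound $\int_{\mathcal K}|\nabla v|^{-\tau}\le C(\beta-\lambda)^{\sigma}$ for general strict supersolutions (the radial example is not a proof; such summability is the nontrivial content of Proposition \ref{pro:SobConstant}); the paper instead simply observes that when $v\ge\underline v>0$ and $\lambda\ge\underline\lambda>0$ the constants in \eqref{condcrucial} and in the weighted Poincar\'e inequality are uniformly bounded, so the same proof runs verbatim for every $p>2$, only using local Lipschitz continuity of $f$, and this is how \eqref{gfgfdtrscvbnmzzxs} is obtained there.
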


As already pointed out, Theorem \ref{th:wcpstrip} is motivated by the application to the study of the monotonicity of the solutions to problem \eqref{E:P} and it will be exploited as stated in Corollary \ref{th:wcpstripbg}. \\

\noindent In the semilinear non-degenerate case, weak comparison principles are equivalent to weak maximum principles, that  in narrow (possibly unbounded) domains can be proved  arguing as in \cite{BCN1}. \\

\noindent Considering the $p$-Laplace operator,  we have to deal with two obstructions: the \emph{degenerate nature} of the  $p$-Laplace operator and the fact that it is a \emph{nonlinear operator}. \\
The degeneracy of the operator causes many technical difficulties and  the fact that solutions are not $C^2$ solutions.
In particular, in the case $p>2$ that we are considering, the standard Sobolev embedding has to be substituted by a weighted version in weighted Sobolev spaces.\\
\noindent Also, the fact that the $p$-Laplace operator is nonlinear causes that weak comparison principles are not equivalent to weak maximum principles.\\

\noindent A result similar to Theorem \ref{th:wcpstrip}, in the case when $1<p<2$,  was proved by the authors in \cite{FMS} (see Theorem 1.1), and was actually the first weak comparison principle in narrow unbounded domains for $p$-Laplace equations.\\

 Here we continue  the study started in \cite{FMS} considering the more difficult case $p>2$.
 To do this (and to prove Theorem \ref{th:wcpstrip}) we will go beyond the  technique introduced in \cite{FMS} (see  Theorem 1.1 in \cite{FMS}), taking care of the degeneracy
 of the weight $|\nabla u|^{p-2}$ that vanishes on the critical points of the solution since $p>2$.

 The proofs are based on the use
of the Poincar\'{e} inequality and an iteration scheme which makes use  of a particular choice of test-functions.\\
 \noindent The case $p>2$ is more complicated than the case
$1<p<2$ since the use of the classic Poincar\'{e} inequality has to be replaced by the use of a \emph{weighted} Poicar\'{e} type inequality, in the spirit of \cite{DS1}.
However, the constants in the \emph{weighted} Poicar\'{e} type inequality developed in \cite{DS1} depend upon the minimum of the solution $u$ (via $f(u)$) in the considered domain. Consequently the  Poicar\'{e}  constant may blow-up if $u$ approaches zero that may occur
since we do not make any a-priori assumptions on $u$.\\

\noindent Our effort here (in the proof of Theorem \ref{th:wcpstrip}) is to deal with this phenomenon. To do this, in Section \ref{section3} and Section \ref{section4}, we provide
a quantitative version of the \emph{weighted} Poicar\'{e} type inequality developed in \cite{DS1}. In some sense, we measure
how the Poincar\'{e} constant blow-up, when $u$ approaches zero. \\

\noindent This will be used taking also into account the fact that, in case when ($f_1$) holds, when $u$ approaches zero, also $f(u)$ (and $f'(u)$) approaches zero as well,
with a decay depending on the power-like nature of $f(\cdot)$ and this is of some advantage as it will be clear in the proofs.
The competition of this two phenomena, gives rise to the condition $p<3$. \\

\noindent We actually do not know if $p<3$ is or not a sharp condition (in the case of power-type nonlinearities). We only remark that $p<3$ is the sharp condition in order to get the $W^{2,2}_{loc}$ regularity of the solutions.

\noindent Let us now provide some applications that follow once that  Theorem \ref{th:wcpstrip}  is available that is

\begin{theorem}\label{mainthm}
Let $u \in C^{1,\alpha}_{loc}({\overline {{\mathbb{R}^N_+}}})$  be a positive solution of \eqref{E:P} with  $|\n u| \in L^{\infty}(\mathbb{R}^N_+)$ and assume that ($H_1$) holds. \\

 \noindent Then $u$ is monotone increasing w.r.t. the $x_N$-direction with
 \[
\frac{\partial u}{\partial x_N}\,>\,0 \quad \text{in}\quad \mathbb{R}^N_+,
 \]
 and consequently $u \in C^{2}({\overline {{\mathbb{R}^N_+}}})$.\\
 \noindent If moreover  $N=3$  and  $u\in L^\infty(\mathbb{R}^3_+)$,  then $u$ has one-dimensional symmetry\footnote{The case $N=2$ has been already considered in \cite{DS3}.} with $u(x',x_N)=u(x_N)$.
\end{theorem}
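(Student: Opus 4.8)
The plan is to prove Theorem~\ref{mainthm} via the Alexandrov--Serrin moving plane method in the $x_N$-direction, using Theorem~\ref{th:wcpstrip} (in the form of Corollary~\ref{th:wcpstripbg}) as the engine that starts the procedure on narrow strips. For $\lambda>0$ set $\Sigma_\lambda=\{0<x_N<\lambda\}$, let $x^\lambda=(x',2\lambda-x_N)$ be the reflection across $\{x_N=\lambda\}$, and let $u_\lambda(x)=u(x^\lambda)$ be the reflected solution, which solves the same equation $-\Delta_p u_\lambda=f(u_\lambda)$ in the reflected strip. Since $u\geq 0$ and $u=0$ on $\partial\mathbb{R}^N_+$, on the bottom $\{x_N=0\}$ we have $u=0\leq u_\lambda$, and on $\{x_N=\lambda\}$ we have $u=u_\lambda$, so $u\leq u_\lambda$ on $\partial\Sigma_\lambda$. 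First I would check that for $\lambda$ small the hypotheses of Theorem~\ref{th:wcpstrip} are met --- crucially the integrability/decay condition \eqref{condcrucial} --- so that $u\leq u_\lambda$ in $\Sigma_\lambda$; here one uses $|\nabla u|\in L^\infty$ and the power-type lower bound on $f$ together with a Harnack-type or boundary-estimate lower bound on $u_\lambda$ near the bottom boundary (via $f(u)\geq a\,u^q$) to control $\int 1/|\nabla v|^\tau$. Then define $\Lambda=\sup\{\lambda>0:\ u\leq u_\mu \text{ in }\Sigma_\mu \text{ for all }0<\mu\leq\lambda\}$ and show $\Lambda=+\infty$.

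The core dichotomy: if $\Lambda<+\infty$, by continuity $u\leq u_\Lambda$ in $\Sigma_\Lambda$; using the strong comparison principle for the $p$-Laplacian (Vázquez's strong maximum principle / the Harnack inequality of Trudinger--Serrin applied to $w=u_\Lambda-u\geq 0$, which satisfies a linear uniformly elliptic inequality on the set where $\nabla u\neq 0$) one gets either $u\equiv u_\Lambda$ in a connected component or $u<u_\Lambda$ in the interior. The identity $u\equiv u_\Lambda$ is impossible because $u>0$ inside while $u_\Lambda=0$ on the image of $\{x_N=0\}$, i.e.\ on the hyperplane $\{x_N=2\Lambda\}\cap\overline{\Sigma_\Lambda}$ --- contradiction. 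Hence $u<u_\Lambda$ strictly in the open strip. Now one wants to push $\Lambda$ a little further: take $\lambda=\Lambda+\varepsilon$ and split $\Sigma_{\Lambda+\varepsilon}$ into a large compact central piece where $u_{\Lambda+\varepsilon}-u$ stays bounded below by a positive constant (by the strict inequality at $\Lambda$ plus continuity) and a thin region near $\{x_N=0\}$ and near $\{x_N=\Lambda+\varepsilon\}$ where the strip is narrow; on the thin region apply Theorem~\ref{th:wcpstrip} again. The subtlety is that the ``thin region'' here is a union of two narrow strips, not one centered slice, so one applies the narrow-domain comparison on $\{0<x_N<\delta\}\cup\{\Lambda+\varepsilon-\delta<x_N<\Lambda+\varepsilon\}$ after reflecting, exactly as in the classical scheme; contradiction with the definition of $\Lambda$. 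Therefore $\Lambda=+\infty$, which gives $u(x',x_N)\leq u(x',2\lambda-x_N)$ for all $0<x_N<\lambda$ and all $\lambda$, i.e.\ $u$ is monotone non-decreasing in $x_N$; letting $\lambda\to\infty$ and differentiating, $\partial u/\partial x_N\geq 0$, and the strong maximum principle applied to $\partial u/\partial x_N$ (which satisfies the linearized equation in the region $\{\nabla u\neq 0\}$, and $\nabla u\neq 0$ wherever $\partial_{x_N}u$ could vanish by the boundary-point lemma at $\{x_N=0\}$) upgrades this to $\partial u/\partial x_N>0$ in $\mathbb{R}^N_+$. Once $\nabla u$ never vanishes, the equation is uniformly elliptic and smooth, so elliptic regularity gives $u\in C^2(\overline{\mathbb{R}^N_+})$.

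For the one-dimensional symmetry when $N=3$ and $u\in L^\infty$: since $u$ is bounded, monotone in $x_N$, and satisfies a uniformly elliptic equation (as $\nabla u\neq 0$), the limit profile $\tilde u(x')=\lim_{x_N\to\infty}u(x',x_N)$ exists and, by standard translation/compactness arguments in $C^{1,\alpha}_{loc}$, is an entire bounded solution of $-\Delta_p\tilde u=f(\tilde u)$ on $\mathbb{R}^{N-1}=\mathbb{R}^2$ with $\tilde u\geq$ (any interior value of $u$), hence $\tilde u>0$; one then invokes the $2$-dimensional classification (the case $N=2$, cited as \cite{DS3}, or a Gibbons-type / energy argument using the monotonicity and boundedness via the stability of monotone solutions and the dimension-two Liouville theorem) to conclude $\tilde u$ is constant, and then a sliding/moving-plane argument in every horizontal direction $e'\in\mathbb{R}^{N-1}$ --- again powered by Theorem~\ref{th:wcpstrip} on narrow strips in those directions, or directly by the fact that $u$ has no dependence on $x'$ at infinity combined with the comparison principle --- forces $u$ to be independent of $x'$, i.e.\ $u(x',x_N)=u(x_N)$.

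The main obstacle I anticipate is verifying condition \eqref{condcrucial} at the start of the moving plane procedure and, more delicately, at the step where we enlarge $\Lambda$: one must produce the constant $C$ in \eqref{condcrucial} uniformly in $x_0'$ and controlled in terms of $v_0=u_\lambda(x_0',(\lambda)/2)$, which requires a quantitative lower bound on $u_\lambda$ (equivalently on $u$ near $\partial\mathbb{R}^N_+$) of the right order and an upper bound for $\int_{\mathcal K(x_0')}|\nabla u_\lambda|^{-\tau}|x-y|^{-\gamma}$. The bound on the negative power of $|\nabla u_\lambda|$ is exactly the kind of estimate proved in \cite{DS1} (summability of $|\nabla u|^{-\tau}$ for $\tau<p-1$), but tracking its dependence on the strip width $\beta-\lambda$ and on $v_0$ so that the scaling $\beta^{(N+\tau-2p-\gamma)}v_0^{2p-2q-2-\tau}$ on the right of \eqref{condcrucial} is respected --- and checking that the exponent of $\beta$ can be made to help rather than hurt as $\beta-\lambda\to 0$ --- is where the restriction $2<p<3$ and the power-type structure of $f$ must be used; this is the technical heart and I would organize it as a separate lemma before running the moving planes.
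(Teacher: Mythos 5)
Your strategy for the monotonicity statement (moving planes in the $x_N$-direction powered by Corollary \ref{th:wcpstripbg}) coincides with the paper's, which runs the proof of Theorem 1.3 of \cite{FMS} verbatim with Theorem \ref{th:wcpstrip} replacing the comparison principle used there; note also that the verification of \eqref{condcrucial} for $v=u_\beta$, which you defer to ``a separate lemma'', is already supplied by Proposition \ref{pr:gradbis} and built into Corollary \ref{th:wcpstripbg}. The genuine gap is in your step pushing the plane past $\Lambda$: you take ``a large compact central piece where $u_{\Lambda+\varepsilon}-u$ stays bounded below by a positive constant (by the strict inequality at $\Lambda$ plus continuity)''. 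In the half-space the cross-section is all of $\mathbb{R}^{N-1}$, so any central region of the strip that contains every $x'$ is unbounded, and the strict inequality $u<u_\Lambda$ gives no uniform positive lower bound on it; if instead the central piece is genuinely compact, then its complement inside the strip is not a union of narrow slabs (it contains the region where $|x'|$ is large at all heights), and Theorem \ref{th:wcpstrip} does not apply there. This is exactly the lack of compactness the paper emphasizes; it is overcome, as in the proof of Theorem 1.3 of \cite{FMS} (and in the same spirit as Lemma \ref{vdcvs0987654} here), by a translation--compactness argument: assuming the plane cannot be advanced, one takes $\varepsilon_n\to 0$ and points where the inequality fails, translates in $x'$, uses the uniform $C^{1,\alpha}_{loc}$ bounds and Ascoli's theorem to pass to a limiting solution, and derives a contradiction via the strong maximum/comparison principles and the Hopf lemma (Theorems \ref{semihop}, \ref{hthCOMPARISONII}, \ref{hthPMFderrr}). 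Without this device, or an equivalent one, your induction step does not close.

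For the one-dimensional symmetry when $N=3$ and $u\in L^\infty$, your route departs from the paper's and, as sketched, is not a proof. The paper extends $u$ by odd reflection to all of $\mathbb{R}^3$ (using $f(0)=0$, which follows from $(f_1)$), notes that $u^\star$ is monotone with $u^\star_y>0$, and invokes the one-dimensional symmetry results for monotone solutions from \cite{FSV,FSV2} (Theorems 1.1 and 1.2 in \cite{FSV}). Your plan is to show the limit profile $w(x')$ at $y=+\infty$ is constant and then to force independence of $x'$ by sliding or moving planes in horizontal directions. Two problems: constancy of the profile at infinity does not by itself yield independence of $x'$ at finite heights --- that passage is a Gibbons-type statement whose proof requires the full strength of stability/geometric Poincar\'e arguments, not a soft comparison; and Theorem \ref{th:wcpstrip} cannot power moving planes in horizontal directions, since it is formulated only for slabs $\mathbb{R}^{N-1}\times[\lambda,\beta]$ narrow in the $x_N$-direction, and in a direction in which the half-space is translation invariant there is no boundary condition from which to start the procedure. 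Moreover \cite{DS3} is a monotonicity result in the half-plane, not a classification of entire two-dimensional solutions. In this paper the limiting-profile construction is used only in the proof of Theorem \ref{liouvillenextgenerationtris}, and there the relevant property transferred to $w$ is stability, not constancy.
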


The proof of Theorem \ref{mainthm} is based on a refined version of   Alexandrov-Serrin moving plane method \cite{A,S,GNN,BN} that takes into account the lack of compacteness, caused by the fact that we work in unbounded domains.
 We refer to \cite{DP,DS1} for the adaptation of  the moving plane technique to the case of the $p$-Laplace operator in \emph{bounded} domains. Considering the case when the domain is  the half-space, the application of the moving plane technique is much more delicate since weak comparison principles in small domains have to be substituted by weak comparison principles in narrow unbounded domains. This causes  that there are no general results in the literature when dealing with the case of the $p$-Laplace.
 In \cite{DS3} it is considered the \emph{two dimensional} case for  positive solutions  of $-\Delta_p u=f(u)$ with a positive  nonlinearity  $f$.

The strength of Theorem  \ref{mainthm} is that it is proved without a-priori assumptions on the behavior of the solution. Namely  at infinity the solution may decay at zero in some regions, while it can be far from zero in some other regions. It is implicit (we will give some details of the proof) in any case in the proof of Theorem  \ref{mainthm} the following:

\begin{theorem}\label{mainthmfdfdfdfdf}
Let $u \in C^{1,\alpha}_{loc}({\overline {{\mathbb{R}^N_+}}})$  be a positive solution of \eqref{E:P} with  $|\n u| \in L^{\infty}(\mathbb{R}^N_+)$.  Assume that $$p>2$$ and $f$ is positive ($f(s)>0$ for $s>0$) and locally Lipschitz continuous. Assume furthermore that
\begin{equation}\label{gfgfgsdhjsjbvbbvbhcnncndnndhuhu}
u\geq \underline{u}_\beta>0\qquad \text{in}\quad \{y\geq\beta\}\,,
\end{equation}
for some $\beta>0$ and some positive constant $\underline{u}_\beta\in \mathbb{R}^+$.\\

 \noindent Then $u$ is monotone increasing w.r.t. the $x_N$-direction with
 \[
\frac{\partial u}{\partial x_N}\,>\,0 \quad \text{in}\quad \mathbb{R}^N_+,
 \]
 and consequently $u \in C^{2}({\overline {{\mathbb{R}^N_+}}})$.\\

 \noindent The result in particular follows (without assuming \eqref{gfgfgsdhjsjbvbbvbhcnncndnndhuhu}) if ($H_2$) holds,
 since \eqref{gfgfgsdhjsjbvbbvbhcnncndnndhuhu} is satisfied in this case (by Lemma \ref{le:cuccurucucu}).
\end{theorem}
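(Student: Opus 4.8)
Here is my plan for proving the theorem.

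The plan is to prove the monotonicity by the Alexandrov-Serrin moving plane method in the $x_N$-direction, using the weak comparison principle in narrow unbounded slabs (Theorem~\ref{th:wcpstrip}) to start the procedure and a translation-compactness argument, together with the strong comparison principle and Hopf's lemma for the $p$-Laplacian (see \cite{DS1,DP}), to run it in the unbounded setting; the a priori bound \eqref{gfgfgsdhjsjbvbbvbhcnncndnndhuhu} is used crucially both to make $-\Delta_p$ non-degenerate near $\partial\mathbb{R}^N_+$ and to keep reflected functions bounded away from zero. I would begin with two preliminaries. By V\'azquez's strong maximum principle, $u>0$ in $\mathbb{R}^N_+$. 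Comparing $u$ on the slab $\{0<y<\beta\}$ with the linear, hence $p$-harmonic, function $\psi(y)=(\underline{u}_\beta/\beta)\,y$ --- which vanishes on $\{y=0\}$, satisfies $\psi\le\underline{u}_\beta\le u$ on $\{y=\beta\}$, and solves $-\Delta_p\psi=0\le f(u)=-\Delta_p u$ --- one obtains $u(x',y)\ge(\underline{u}_\beta/\beta)\,y$ on $\{0<y<\beta\}$, hence $u(x',y)\ge(\underline{u}_\beta/\beta)\min\{y,\beta\}$ for all $y>0$. With $|\nabla u|\le L:=\|\nabla u\|_{L^\infty}$ and the $C^{1,\alpha}$ estimates of \cite{Di,Li,T} (uniform in $x'$ by translation invariance in $x'$ and boundedness of $u$ and $f(u)$ on bounded slabs), this yields $\delta_1>0$ with $\partial_y u\ge c_0>0$ on $\{0\le y\le\delta_1\}$, so that $-\Delta_p$ is uniformly elliptic there and $u(x',\cdot)$ is strictly increasing on $[0,\delta_1]$.

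For $\lambda>0$ put $\Sigma_\lambda=\mathbb{R}^{N-1}\times(0,\lambda)$, $u_\lambda(x',y)=u(x',2\lambda-y)$ and $\Lambda_0=\{\lambda>0:\ u\le u_\mu\text{ in }\Sigma_\mu\text{ for all }0<\mu\le\lambda\}$; note that $u_\mu-u\ge 0$ on $\partial\Sigma_\mu$ (it equals $u(x',2\mu)\ge 0$ on $\{y=0\}$ and $0$ on $\{y=\mu\}$), and that $u\le u_\mu$ in $\Sigma_\mu$ is equivalent to $u(x',y)\le u(x',z)$ for all $0<y<z$ with $y+z\le 2\mu$. For $\lambda\le\delta_1/2$, splitting $\Sigma_\lambda$ into a thin bottom strip $\{0<y<\sigma\}$ --- on which $u\le Ly$ is small while $u_\lambda\ge(\underline{u}_\beta/\beta)(2\lambda-\sigma)>0$ for $\sigma=\sigma(\lambda)>0$ small enough, so that $u<u_\lambda$ there --- and the narrow strip $\{\sigma<y<\lambda\}$ away from $\{y=0\}$, on which Theorem~\ref{th:wcpstrip} (second part) applies to $u$ and $v=u_\lambda$ (positive and bounded below there by the preliminaries, and with $u\le u_\lambda$ on $\{y=\sigma\}$ since $u(x',\cdot)$ increases on $[0,\delta_1]\supset[0,2\lambda]$), one gets $u\le u_\lambda$ in $\Sigma_\lambda$; hence $(0,\delta_1/2]\subset\Lambda_0$. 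By continuity of $u$ the set $\Lambda_0$ is closed; if it is unbounded we are done, so assume $\Lambda:=\sup\Lambda_0<+\infty$, so that $\Lambda\in\Lambda_0$.

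This is the heart of the argument. For each $n$, since $\Lambda+\tfrac1n\notin\Lambda_0$ there exist $\mu_n\in(\Lambda,\Lambda+\tfrac1n]$ and $x_n=(x_n',y_n)\in\Sigma_{\mu_n}$ with $u(x_n)>u_{\mu_n}(x_n)$. As in the previous paragraph, $u<u_{\mu_n}$ on $\{0<y<\delta\}$ for a suitable $\delta=\delta(\Lambda)>0$ (there $u$ is small and $u_{\mu_n}$ is bounded below), so $y_n\ge\delta$; moreover $u(x_n)>u_{\mu_n}(x_n)=u(x_n',2\mu_n-y_n)\ge(\underline{u}_\beta/\beta)\min\{\Lambda,\beta\}=:2c_\Lambda>0$. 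Translating, $\tilde u_n:=u(\,\cdot+(x_n',0)\,)$ converges, along a subsequence, in $C^1_{loc}(\overline{\mathbb{R}^N_+})$ to a solution $u_\infty$ of \eqref{E:P} satisfying $u_\infty(x',y)\ge(\underline{u}_\beta/\beta)\min\{y,\beta\}$ --- hence $u_\infty>0$ in $\mathbb{R}^N_+$ --- with $u_\infty\le u_{\infty,\mu}$ in $\Sigma_\mu$ for all $\mu\le\Lambda$ (passing to the limit in $\tilde u_n\le\tilde u_{n,\mu}$, valid since $\Lambda\in\Lambda_0$); and, with $y_\infty:=\lim y_n\in[\delta,\Lambda]$, $u_\infty(0,y_\infty)\ge u_\infty(0,2\Lambda-y_\infty)$. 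If $u_{\infty,\Lambda}-u_\infty\equiv 0$ in $\Sigma_\Lambda$, then $u_\infty(\cdot,2\Lambda)=u_\infty(\cdot,0)=0$, contradicting $u_\infty>0$; otherwise, by the strong comparison principle, $u_{\infty,\Lambda}-u_\infty>0$ in $\Sigma_\Lambda$. In this case: if $y_\infty<\Lambda$, then $u_\infty(0,y_\infty)\le u_\infty(0,2\Lambda-y_\infty)$ forces equality, contradicting positivity of $u_{\infty,\Lambda}-u_\infty$ at the interior point $(0,y_\infty)$; if $y_\infty=\Lambda$, Hopf's lemma for the $p$-Laplacian applied to $u_{\infty,\Lambda}-u_\infty$ on $\{y=\Lambda\}$ gives $\partial_y u_\infty(x',\Lambda)>0$, hence $\partial_y\tilde u_n>0$ near $(0,\Lambda)$ for $n$ large, so that $\tilde u_n(0,y_n)<\tilde u_n(0,2\mu_n-y_n)$ (both heights tend to $\Lambda$ and $y_n<2\mu_n-y_n$), contradicting $u(x_n)>u_{\mu_n}(x_n)$. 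Therefore $\Lambda=+\infty$ and $\Lambda_0=(0,+\infty)$, i.e. $u\le u_\lambda$ in $\Sigma_\lambda$ for every $\lambda>0$.

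It follows that $u(x',\cdot)$ is non-decreasing on $(0,\lambda]$ for every $\lambda$, i.e. $u$ is monotone in $x_N$. The strong comparison principle applied to $u\le u_\lambda$ gives $u<u_\lambda$ in the interior of $\Sigma_\lambda$ for every $\lambda>0$, hence $u(x',y_1)<u(x',y_2)$ for $y_1<y_2$ (take $\lambda=\tfrac{y_1+y_2}2$), and Hopf's lemma applied to $u_\lambda-u$ on $\{y=\lambda\}$ yields $\partial_y u(x',\lambda)>0$ for all $\lambda>0$; together with $\partial_y u(x',0)\ge c_0>0$ this gives $|\nabla u|\ge\partial_y u>0$ on all of $\overline{\mathbb{R}^N_+}$. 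Then $-\Delta_p u=f(u)$ is a locally uniformly elliptic equation with $C^{0,\alpha}_{loc}$ coefficients and right-hand side, so Schauder estimates give $u\in C^{2,\alpha}_{loc}(\overline{\mathbb{R}^N_+})\subset C^2(\overline{\mathbb{R}^N_+})$. Finally, under $(H_2)$ the bound \eqref{gfgfgsdhjsjbvbbvbhcnncndnndhuhu} is automatic by Lemma~\ref{le:cuccurucucu}, so the statement follows in that case as well. The genuine obstacle, as I see it, is precisely the continuation of the moving plane in the unbounded slab: compactness is lost and one cannot extract a uniform gap $u_\Lambda-u\ge\eta>0$, so the translation-compactness argument above is needed, and --- like the whole scheme --- it hinges on \eqref{gfgfgsdhjsjbvbbvbhcnncndnndhuhu}.
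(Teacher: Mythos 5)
Your overall architecture (quantitative positivity of $\partial_y u$ near $\{y=0\}$ to start the moving plane, then a translation--compactness argument with the strong comparison principle, with \eqref{gfgfgsdhjsjbvbbvbhcnncndnndhuhu} guaranteeing nontrivial limits) is close in spirit to the paper, but there is a genuine gap at the heart of the continuation step, precisely where the paper's main tool enters and yours does not. When the contradiction points satisfy $y_\infty=\Lambda$, you invoke ``Hopf's lemma for the $p$-Laplacian applied to $u_{\infty,\Lambda}-u_\infty$ on $\{y=\Lambda\}$''. No such boundary-point Hopf lemma for the \emph{difference of two solutions} of the degenerate equation is available, and none is stated in the paper: Theorem \ref{semihop} is Hopf for a single nonnegative supersolution, Theorem \ref{hthCOMPARISONII} is an interior strong comparison principle, and Theorem \ref{hthPMFderrr} concerns the derivatives $u_{x_i}$ on regions where they are already known to be nonnegative. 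At the touching point one has, by your own construction (mean value theorem along the segment between the heights $y_n$ and $2\mu_n-y_n$), $\partial_y u_\infty(0,\Lambda)=0$, so the full gradient of $u_\infty$ may vanish there; then the linearization is degenerate (weight $|\nabla u_\infty|^{p-2}$ with $p>2$ vanishing) and the classical Hopf argument for the difference breaks down --- concluding $\partial_y u_\infty(0,\Lambda)>0$ from it is circular. This degenerate touching on the plane is exactly the scenario the paper's proof is built to avoid: following the proof of Theorem 1.3 in \cite{FMS}, the compactness argument is applied only to contradiction points at distance at least $\delta$ \emph{below} the plane (so the touching in the limit is interior and Theorem \ref{hthCOMPARISONII} suffices), while the remaining narrow strip $\Sigma_{(\Lambda-\delta,\Lambda+\epsilon)}$ around the plane is handled by the second part of Theorem \ref{th:wcpstrip}, with $v=u_{\Lambda+\epsilon}$ bounded away from zero there thanks to \eqref{gfgfgsdhjsjbvbbvbhcnncndnndhuhu} and Remark \ref{vdcvs0987654b}. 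In your proposal Theorem \ref{th:wcpstrip} plays no role in the continuation, which is why the boundary case cannot be closed. (The same ``Hopf for differences'' reappears at the end to get $\partial_y u>0$; there it is harmless, since monotonicity plus Theorem \ref{hthPMFderrr} applied to $u_y\ge 0$ gives the strict inequality.)

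A secondary, repairable issue is your first preliminary: the bound $u(x',y)\ge(\underline{u}_\beta/\beta)\min\{y,\beta\}$ is obtained by ``comparing'' $u$ with a linear $p$-harmonic function on the unbounded slab $\{0<y<\beta\}$. A comparison principle for the $p$-Laplacian between two functions in an unbounded slab is not automatic (controlling unboundedness in the horizontal directions is the very theme of Theorem \ref{th:wcpstrip}, and a naive cut-off/iteration only yields a growth bound, not comparison), so this step needs justification. The conclusions you actually use --- $\partial_y u\ge c_0>0$ in a strip $\{0\le y\le \delta_1\}$ and $u$ uniformly bounded below on $\{y\ge\delta\}$ for each $\delta>0$ --- are available instead from Lemma \ref{vdcvs0987654} (whose proof is itself a translation--compactness plus Hopf argument using \eqref{gfgfgsdhjsjbvbbvbhcnncndnndhuhu}), Remark \ref{vdcvs0987654b}, and Harnack chains as in Proposition \ref{pr:gradbis}; citing these closes that gap. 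The essential missing ingredient, however, remains the use of the weak comparison principle in narrow unbounded strips near the critical plane in place of a Hopf-type lemma for differences.
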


The monotonicity of the solution is an important information, which in particular implies the stability of the solution, see  \cite{DFSV,FSV}.
Note in particular that in  many cases the  monotonicity of the solution (and the stability of the solution) can be exploited to deduce Liouville type theorems.

Following \cite{Fa2,DFSV}, we  set

\begin{equation}\label{**B}
q_c(N,p)=\frac{[(p-1)N-p]^2+p^2(p-2) -p^2(p-1)N+2p^2\sqrt{(p-1)(N-1)}}{(N-p)[(p-1)N-p(p+3)]}\,,
\end{equation}

\noindent that is the critical exponent, that was found in \cite{Fa2} in the case $p=2$, and later introduced in \cite{DFSV} for the case $p>2$.
This exponent is critical in the sense that it gives the sharp condition for the existence (or non-existence) of stable solution of
Lane-Emden-Fowler type equations. We refer to \cite{DFSV} for the definition of stable solutions in our setting, and for a proof of the fact that
monotone solutions are actually stable solutions.\\
The exponent $q_c(N,p)$ is larger than the classic  critical exponent arising from Sobolev embedding.   We refer the reader to  \cite{Zou}
previous Liouville type results for $p$-Laplace equations.\\

\noindent We have the following

\begin{theorem}\label{liouvillenextgenerationtris}
Let  $2<p<3$ and consider $u\in C^{1}(\R^N_+)$ a non-negative
weak solution of~\eqref{E:P} in~$\R^N_+$ with $|\nabla u|\in L^\infty (\R^N_+)$ and
\[
f(s)=s^q\,.
\]
Assume that
$$
\begin{cases}
&(p-1) < q <\infty,\quad\quad\quad \,\,\,\,\text{if }\qquad \displaystyle N \leqslant  \frac{p(p+3)}{p-1},\\
&(p-1) < q < q_c(N,m), \quad \text{if }\qquad \displaystyle  N >\frac{p(p+3)}{p-1}\,,\end{cases}$$

\noindent then $u=0$.\\

\noindent If moreover we assume that $u$ is bounded, then it follows that $u=0$ assuming only that

$$
\begin{cases}
&(p-1) < q <\infty,\quad\quad\quad \quad\qquad\,\,\,\,\text{if }\qquad \displaystyle(N-1) \leqslant  \frac{p(p+3)}{p-1},\\
&(p-1) < q < q_c((N-1),m), \quad \,\,\text{if }\qquad \displaystyle (N-1) >\frac{p(p+3)}{p-1}\,.\end{cases}$$

 If $f(\cdot)$ satisfies ($f_2$), then it follows that $u=0$ for any\footnote{In the case $f(0)>0$, the solution does not exist at all.} $p>2$.
\end{theorem}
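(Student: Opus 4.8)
The plan is to deduce the Liouville-type Theorem~\ref{liouvillenextgenerationtris} from the monotonicity results (Theorems~\ref{mainthm} and \ref{mainthmfdfdfdfdf}) by combining monotonicity with the stability theory for $p$-Laplace Lane--Emden equations developed in \cite{DFSV}. The first step is to observe that, under hypothesis ($H_1$) with $f(s)=s^q$, Theorem~\ref{mainthm} (for the bounded case) or Theorem~\ref{mainthmfdfdfdfdf} (for the general case, once the a-priori lower bound \eqref{gfgfgsdhjsjbvbbvbhcnncndnndhuhu} is available) applies and yields that any positive solution $u$ is strictly monotone increasing in the $x_N$-direction, hence in particular $u\in C^2(\overline{\mathbb R^N_+})$. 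Then I would invoke the key fact, proved in \cite{DFSV}, that a monotone (in one direction) solution of $-\Delta_p u=u^q$ in $\mathbb R^N_+$ is a \emph{stable} solution. The argument is the standard one: if $u$ is a positive solution then $\partial_{x_N} u$ satisfies the linearized equation
\[
-\dvg\!\big(|\nabla u|^{p-2}\nabla (\partial_{x_N} u)+(p-2)|\nabla u|^{p-4}(\nabla u\cdot\nabla \partial_{x_N} u)\nabla u\big)=q\,u^{q-1}\partial_{x_N} u ,
\]
and $\partial_{x_N}u>0$ provides a positive supersolution of the linearized operator, which (by the usual Allegretto--Piepenbrink / Barta-type argument, adapted to the quasilinear setting as in \cite{DFSV}) gives stability of $u$.

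The second step is to feed this stability information into the nonexistence results for stable solutions. By \cite{DFSV}, there are no nontrivial nonnegative stable solutions of $-\Delta_p u=u^q$ in $\mathbb R^N$ — and, more to the point here, in the half-space $\mathbb R^N_+$ with zero Dirichlet data — when $(p-1)<q<\infty$ in the low-dimensional range $N\le \frac{p(p+3)}{p-1}$, and when $(p-1)<q<q_c(N,p)$ in the range $N>\frac{p(p+3)}{p-1}$, where $q_c(N,p)$ is the exponent \eqref{**B}. This directly gives $u\equiv 0$ in the first (unbounded, possibly non-decaying) case. For the bounded case one gains one dimension: a bounded monotone solution on $\mathbb R^N_+$, being increasing and bounded in $x_N$, converges as $x_N\to+\infty$ to a bounded stable solution $\tilde u=\tilde u(x')$ of $-\Delta_p \tilde u=\tilde u^q$ on the \emph{whole} space $\mathbb R^{N-1}$; applying the $\mathbb R^{N-1}$ Liouville theorem of \cite{DFSV} forces $\tilde u\equiv 0$, whence $u\equiv 0$ by monotonicity and the strong maximum principle. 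This accounts for the improved thresholds involving $N-1$. Finally, for nonlinearities satisfying ($f_2$) one uses Theorem~\ref{mainthmfdfdfdfdf}: the hypothesis $f(s)\ge c_f s^{p-1}$ near $0$ together with Lemma~\ref{le:cuccurucucu} guarantees the uniform positive lower bound \eqref{gfgfgsdhjsjbvbbvbhcnncndnndhuhu}, hence monotonicity for all $p>2$; then positivity of $f$ and the monotone-limit argument (or a direct Pohozaev/integral identity on half-balls exploiting monotonicity and $u(x',0)=0$) rules out any nontrivial solution, and if $f(0)>0$ the Dirichlet condition $u(x',0)=0$ is incompatible with the equation at the boundary, so no solution exists at all.

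The main obstacle I expect is justifying the passage to the limit $x_N\to+\infty$ in the bounded case, and more generally making the half-space Liouville theorem for stable solutions genuinely applicable: one must show that the monotone limit $\tilde u$ is not merely a pointwise limit but a bona fide $C^{1,\alpha}_{loc}$ weak solution on $\mathbb R^{N-1}$ inheriting stability, which requires uniform $C^{1,\alpha}_{loc}$ estimates (available here from $|\nabla u|\in L^\infty$ and the regularity theory \cite{Di,T}) and care that stability (a property defined through an integral inequality against compactly supported test functions) is preserved under local uniform convergence of the solutions and their gradients — the quasilinear weighted nature of the linearized quadratic form, with weight $|\nabla u|^{p-2}$ degenerating at critical points, makes this step delicate, exactly as in the regularity discussion surrounding Theorem~\ref{th:wcpstrip}. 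Once this is in place, everything else is a bookkeeping matter of matching the dimensional thresholds $\tfrac{p(p+3)}{p-1}$ and the exponent $q_c$ from \eqref{**B} to the cases $N$ versus $N-1$ stated in the theorem. I would also remark that throughout, the restriction $2<p<3$ is inherited from the monotonicity Theorem~\ref{mainthm} (via the weighted Poincaré argument behind Theorem~\ref{th:wcpstrip}), and is not needed in the ($f_2$) case, consistently with the last sentence of the statement.
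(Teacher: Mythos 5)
Your proposal follows essentially the same route as the paper: monotonicity from Theorems \ref{mainthm} and \ref{mainthmfdfdfdfdf}, stability of monotone solutions via \cite{DFSV}, the Liouville theorem for stable solutions of \cite{DFSV} in the first case, and in the bounded case the limit profile $w(x')=\lim_{t\to\infty}u(x',y+t)$ shown to be a stable solution in $\mathbb{R}^{N-1}$ (with exactly the delicate point you flag — preserving stability under $C^1_{loc}$ convergence — handled in the paper by concentrated test functions $\varphi_1(x')\varphi_2(y)$ and the $C^1$ regularity of $qs^{q-1}$), before applying \cite{DFSV} in dimension $N-1$. The only cosmetic difference is in the ($f_2$) case, where the paper disposes of the limit profile by the quasilinear nonexistence result of \cite{MP} rather than your Pohozaev-type alternative; otherwise the arguments coincide.
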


The paper is organized as follows: for the reader's convenience in Section \ref{schemeproofwww} we give a scheme of the proofs.
 We collect some preliminary results in Section \ref{preliminaries}, Section \ref{section3} and Section \ref{section4}.
In Section \ref{th:wcpstripsect} we prove our main result Theorem \ref{th:wcpstrip}.  In  Section \ref{sec666} we provide the  proof of Theorem \ref{mainthm}, Theorem \ref{mainthmfdfdfdfdf} and  Theorem \ref{liouvillenextgenerationtris}.

\section{Scheme of the proofs}\label{schemeproofwww}
\begin{itemize}
\item[(i)] The proof of Theorem \ref{th:wcpstrip} is quite long and somehow technical, we will divide it in various steps.\\
The main idea is to compare $u$ and $v$ on compact sets, and then pass to the limit in  the whole strip.
The limiting process will be carried by a refined iteration technique.\\
\noindent Let us emphasize that an important ingredient is the use of  a weighted Poincar\'{e} inequality (see Section \ref{section4}), that holds true under the abstract assumption \eqref{condcrucial} on $v$. The reader should  keep in mind that, when exploiting Theorem \ref{th:wcpstrip}
to apply the moving plane method, $v$ will be replaced by the reflection of the solution $u$. Namely, in the set $\{y\leq \beta\}$, we will consider $v(x',y)\,:=\,u(x',2\beta-y)$ and therefore we will need to show that actually \eqref{condcrucial} holds true in this case.
This motivates the assumption \eqref{condcrucial}, since we will  prove that it holds  in the case  $v(x',y)\,:=\,u(x',2\beta-y)$.

\noindent \emph{It will be clear from the proof that, modifying \eqref{condcrucial}, we could prove the result for any $p>2$}. \\

\item[(ii)] Taking into account (i), we are lead to prove properties of the summability of $\frac{1}{|\nabla \,u |}$, say in the strip
$\{\beta\leq y\leq 2\beta\}$ (we consider at this stage this case which is the more difficult one). This in fact correspond to proving the summability of $\frac{1}{|\nabla \,v |}$  in the strip
$\{0\leq y\leq \beta\}$. Note that if $u$ approaches zero (that does not occur far from the boundary, in the case of bounded domains), then the estimates we get blow up, and we have to estimate the way this happens. Recall in fact that possibly $u$ may decay at zero also far from the boundary \\

\item[(iii)] We study the summability of $\frac{1}{|\nabla \,u |}$ in Proposition \ref{pr:gradbis}, where we also exploit Proposition \ref{pr:grad}. Actually in the proof of Proposition \ref{pr:gradbis} we use Proposition \ref{pro:SobConstant} that is a quantitative version of some known results in \cite{DS1}.\\

\noindent There is a technical difficulty given by the fact that the constants given by Proposition \ref{pro:SobConstant} depend on the distance of the domain from the boundary, that is an obstruction when $\beta$ small, namely when we will start the moving plane procedure. To overcame this difficulty we will use some scaling arguments.\\

\item[(iv)] The weighted Sobolev (and Poincar\'{e}) inequality are recalled in  Section \ref{section4}, where we also provide a version of it
that allows to split the domain in two parts, and use a different weight function in each sub-domain. This is needed in the proof of Theorem \ref{th:wcpstrip}.\\

\item[(v)] In the \emph{super-linear} case, the proofs are simpler, since it is possible to use some standard translation arguments to show that the solution is monotone near the boundary, and strictly positive (bounded away from zero) far from the boundary. This is proved in
Lemma \ref{le:cuccurucucu} and Lemma \ref{vdcvs0987654}, and then used to prove Theorem \ref{mainthmfdfdfdfdf}.\\

\item[(vi)] The Liouville type results proved in Theorem \ref{liouvillenextgenerationtris} follows by \cite{DFSV}, once we know that the solution
is monotone, and consequently stable.\\
\noindent If the solution $u$ is also bounded, we can study the limiting profile of $u$ at infinity,
 \begin{equation}\nonumber\begin{split}
& w(x')
:=\lim_{t\rightarrow \infty} u(x',y+t).
\end{split}\end{equation}
 which is a stable solution in $\mathbb{R}^{N-1}$, and we get non-existence below the largest critical exponent $q_c(N-1,p)$.
\end{itemize}

\section{Some useful known results}\label{preliminaries}

\

We start stating  a lemma that will be useful in the proof of Theorem \ref{th:wcpstrip}. For the proof we refer the reader to \cite{FMS}.
\begin{lemma}\label{Le:L(R)}
Let $\theta >0$ and $\nu>0$ such that $\theta < 2^{-\nu}$.
Moreover let $R_0>0$, $c>0$ and
$$\mathcal{L}:(R_0, + \infty) \rightarrow \mathbb{R}$$
a non-negative and non-decreasing function such that
\begin{equation}\label{eq:L}
\begin{cases}
\mathcal{L}(R)\leq \theta \mathcal{L}(2R)+g(R) & \forall R>R_0,\\
\mathcal{L}(R)\leq CR^{\nu} & \forall R >R_0,
\end{cases}
\end{equation}
where $g:(R_0, +\infty)\rightarrow \mathbb{R}^+$ is such that
$$\lim_{R\rightarrow +\infty}g(R)=0 .$$ Then
$$\mathcal{L}(R)=0.$$

\end{lemma}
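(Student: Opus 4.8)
The plan is to iterate the recursive inequality in \eqref{eq:L} and then use the polynomial bound $\mathcal{L}(R)\le CR^{\nu}$ to absorb the leading term. First I would fix $R>R_{0}$ and apply the first line of \eqref{eq:L} repeatedly: since $\mathcal{L}(R)\le\theta\,\mathcal{L}(2R)+g(R)$, and the same inequality holds at $2R$, $4R$, $\dots$, a straightforward induction on $n$ gives
\[
\mathcal{L}(R)\ \le\ \theta^{n}\,\mathcal{L}(2^{n}R)\ +\ \sum_{k=0}^{n-1}\theta^{k}\,g(2^{k}R)\qquad\text{for every }n\in\N .
\]

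Next I would dispose of the first term on the right-hand side. By the second line of \eqref{eq:L} we have $\theta^{n}\mathcal{L}(2^{n}R)\le C\,\theta^{n}(2^{n}R)^{\nu}=CR^{\nu}\,(2^{\nu}\theta)^{n}$, and the hypothesis $\theta<2^{-\nu}$ is exactly $2^{\nu}\theta<1$; hence $\theta^{n}\mathcal{L}(2^{n}R)\to0$ as $n\to\infty$. In particular $\theta<1$ as well, so the geometric series $\sum_{k}\theta^{k}$ converges.

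To control the remaining sum I would argue with an $\varepsilon$. Given $\varepsilon>0$, since $g(R)\to0$ as $R\to+\infty$ there is $\bar R=\bar R(\varepsilon)>R_{0}$ with $g(t)\le\varepsilon$ for all $t\ge\bar R$. Then, for any $R\ge\bar R$, every argument $2^{k}R$ is $\ge\bar R$, so
\[
\sum_{k=0}^{n-1}\theta^{k}\,g(2^{k}R)\ \le\ \varepsilon\sum_{k=0}^{n-1}\theta^{k}\ \le\ \frac{\varepsilon}{1-\theta}\,.
\]
Letting $n\to\infty$ in the iterated inequality, the first term vanishes and we obtain $\mathcal{L}(R)\le\varepsilon/(1-\theta)$ for every $R\ge\bar R$.

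Finally I would remove the dependence on $\varepsilon$ using monotonicity. Fix an arbitrary $R_{*}>R_{0}$. For each $\varepsilon>0$ we may take $\bar R(\varepsilon)\ge R_{*}$ (enlarging it if necessary), and since $\mathcal{L}$ is non-negative and non-decreasing, $0\le\mathcal{L}(R_{*})\le\mathcal{L}(\bar R(\varepsilon))\le\varepsilon/(1-\theta)$. Letting $\varepsilon\to0$ gives $\mathcal{L}(R_{*})=0$, and as $R_{*}>R_{0}$ was arbitrary, $\mathcal{L}\equiv0$. The argument is entirely elementary; the only subtlety is the order of the quantifiers — $g$ is neither assumed bounded near $R_{0}$ nor monotone, so one must first restrict to arguments beyond $\bar R$ before summing, and only afterwards invoke the monotonicity of $\mathcal{L}$ to descend to all $R>R_{0}$.
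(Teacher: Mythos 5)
Your proof is correct, and it is essentially the same dyadic iteration argument as the proof the paper relies on (the paper does not prove Lemma \ref{Le:L(R)} itself but refers to \cite{FMS}, where the lemma is established by iterating $\mathcal{L}(R)\leq\theta\mathcal{L}(2R)+g(R)$, killing the term $\theta^{n}\mathcal{L}(2^{n}R)$ via the growth bound and $2^{\nu}\theta<1$, and handling the $g$-sum with the same $\varepsilon$--$\bar R(\varepsilon)$ choice followed by monotonicity). Your closing remark about the quantifier order and the final descent to all $R>R_{0}$ via monotonicity is exactly the point that makes the argument complete.
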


\noindent
Referring to   \cite{V} for the case of the $p$-Laplace operator, and to  \cite{PSB} for the case of a broad class of quasilinear elliptic operators, we recall the following:
\begin{theorem}(Strong Maximum Principle and Hopf's Lemma).\label{semihop}
 Let $\Omega$ be a domain in $\mathbb{R}^N$ and suppose that $u \in C^1(\Omega)$, $u \geqslant 0$
 in $\Omega$, weakly solves
 \[
 -\Delta_p u+cu^q=g \geqslant 0 \quad \mbox{in  }\quad \Omega\,,
 \]
 with $1 < p < \infty$, $q \geqslant p-1$, $c \geqslant 0$ and $g \in L^\infty_{loc}(\Omega)$. If
 $u \neq 0$ then $u >0$ in $\Omega$. Moreover for any point $x_0 \in \partial \Omega$ where the
 interior sphere condition is satisfied, and such that $u \in C^1(\Omega \cup \{x_0\})$ and
 $u(x_0)=0$ we have that $\frac{\partial u}{\partial s}>0$ for any inward directional derivative
 (this means that if $y$ approaches $x_0$ in a ball $B \subseteq \Omega$ that has $x_0$ on its
 boundary, then $\lim_{y \rightarrow x_0}\frac{u(y)-u(x_0)}{|y-x_0|}>0$).
\end{theorem}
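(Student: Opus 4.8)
The plan is to reduce the statement to the classical strong maximum principle and Hopf boundary lemma for the $p$-Laplacian \emph{perturbed by a monotone zero order term}, and to establish that perturbed version by a touching-ball plus explicit-barrier argument, in the spirit of V\'azquez \cite{V} and Pucci--Serrin \cite{PSB}. (One could also quote V\'azquez's theorem directly for the inequality $\Delta_p u\le c\,u^q$, which follows at once from $g\ge0$: the hypothesis $q\ge p-1$ is exactly the Osgood-type condition $\int_{0^+}(s\,\beta(s))^{-1/p}\,ds=+\infty$ with $\beta(s)=c\,s^q$, and the boundary estimate is its Pucci--Serrin counterpart.) First I would reduce the zero order term: since $u\in C^1(\Omega)$ it is locally bounded, so for any bounded open $U$ with $\overline U\subset\Omega$, putting $C_U:=c\,\|u\|_{L^\infty(\overline U)}^{q-(p-1)}$ (finite exactly because $q\ge p-1$) and using $c\,u^q=c\,u^{p-1}u^{q-(p-1)}\le C_U\,u^{p-1}$, one gets
\[
-\Delta_p u + C_U\,u^{p-1}\;=\;g+\big(C_U u^{p-1}-c u^q\big)\;\ge\;0\qquad\text{weakly in }U .
\]
Thus $u\ge0$ is a weak supersolution in $U$ of the operator $w\mapsto-\Delta_p w+C_U\,w^{p-1}$, whose zero order term is nondecreasing in $w\ge0$; for such operators the weak comparison principle on $U$ is standard (test with $(w-u)_+$ and use the monotonicity of $\xi\mapsto|\xi|^{p-2}\xi$ and of $t\mapsto t^{p-1}$).

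\textbf{Strong maximum principle.} Arguing by contradiction, suppose $u(\bar x)=0$ at some $\bar x\in\Omega$ while $u\not\equiv0$. A standard topological argument (joining $\bar x$ to a point where $u>0$ by a path in the connected open set $\Omega$, using continuity) produces a ball with $\overline{B_R(z)}\subset\Omega$, $u>0$ in $B_R(z)$, and $u(x_1)=0$ for some $x_1\in\partial B_R(z)$. On the annulus $A:=B_R(z)\setminus\overline{B_{R/2}(z)}$ one has $u\ge m:=\min_{\partial B_{R/2}(z)}u>0$, and there I insert the radial barrier $w(x):=\sigma\big(e^{-\lambda|x-z|}-e^{-\lambda R}\big)$, for which a direct computation gives, with $r=|x-z|\in[R/2,R]$,
\[
\Delta_p w=(\sigma\lambda)^{p-1}r^{-1}e^{-(p-1)\lambda r}\big[(p-1)\lambda r-(N-1)\big],\qquad 0\le w^{p-1}\le\sigma^{p-1}e^{-(p-1)\lambda r}.
\]
Choosing first $\lambda$ large (depending only on $N,p,R$ and on $C_A$), so that the bracket is positive and the resulting coefficient dominates $C_A$, yields $-\Delta_p w+C_A\,w^{p-1}\le0$ in $A$; then choosing $\sigma$ small gives $w\le m\le u$ on $\partial B_{R/2}(z)$ and $w=0\le u$ on $\partial B_R(z)$. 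The weak comparison principle (Step 1 with $U=A$) forces $w\le u$ in $A$. But $x_1\in\Omega$ is an interior minimum of the $C^1$ function $u$, hence $\nabla u(x_1)=0$, whereas $w\le u$ with $w(x_1)=u(x_1)=0$ gives $\partial_\nu u(x_1)\ge\partial_\nu w(x_1)=\sigma\lambda e^{-\lambda R}>0$ along the inner radial direction $\nu=(z-x_1)/R$ — a contradiction. Hence $u$ has no interior zero, i.e.\ $u>0$ in $\Omega$.

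\textbf{Hopf's lemma.} Now $u>0$ in $\Omega$. Given $x_0\in\partial\Omega$ with an interior ball $B_R(z)\subset\Omega$, $x_0\in\partial B_R(z)$, and $u(x_0)=0$, I would repeat the construction on $A=B_R(z)\setminus\overline{B_{R/2}(z)}$ with $m:=\min_{\partial B_{R/2}(z)}u>0$, obtaining again $w\le u$ in $A$. Since $w(x_0)=u(x_0)=0$ and $\nabla w(x_0)=\sigma\lambda e^{-\lambda R}\,\nu$ with $\nu=(z-x_0)/R$, for every direction $s$ pointing into the ball $B_R(z)$ at $x_0$ one gets
\[
\liminf_{t\to0^+}\frac{u(x_0+ts)-u(x_0)}{t}\;\ge\;\partial_s w(x_0)\;=\;\sigma\lambda e^{-\lambda R}\,(\nu\cdot s)\;>\;0 ;
\]
and more generally, using $e^{\xi}-1\ge\xi$ one has $w(y)\ge\sigma\lambda e^{-\lambda R}(R-|y-z|)$, so that $\liminf_{y\to x_0}\big(u(y)-u(x_0)\big)/|y-x_0|>0$ for any non-tangential approach inside $B_R(z)$. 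This is the asserted estimate for the inward directional derivatives.

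\textbf{Main difficulty.} I expect the barrier step to be the real issue: one needs a profile whose $p$-Laplacian is not merely positive on the annulus but actually \emph{beats} the zero order term carrying the non-adjustable constant $C_A$. This is where the hypothesis $q\ge p-1$ is genuinely used — once to keep $C_A$ finite, and again, implicitly, through the fact that the exponential profile makes $\Delta_p w$ and $w^{p-1}$ share the same rate $e^{-(p-1)\lambda r}$, so that the comparison collapses to a polynomial inequality in the single parameter $\lambda$; for $q<p-1$ the statement is false (dead cores appear), consistently with the failure of the Osgood condition. All of this is classical, and I would refer to \cite{V,PSB} for the technical details rather than reconstructing them.
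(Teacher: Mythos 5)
Your argument is correct, and it is essentially the classical one: the paper does not prove this theorem at all, but simply recalls it from V\'azquez \cite{V} and Pucci--Serrin \cite{PSB}, whose proofs proceed exactly as you do — absorb $c\,u^q$ into $C_U u^{p-1}$ using $q\ge p-1$ and local boundedness, then run the touching-ball/exponential-barrier comparison on an annulus for both the interior positivity and the boundary (Hopf) estimate. Your reduction, barrier computation and use of the weak comparison principle for $-\Delta_p+C\,(\cdot)^{p-1}$ are all sound, so there is nothing to add beyond the references the paper already gives.
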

\noindent Also we will make repeated use of the following strong comparison principle (see \cite{DS2}):
\begin{theorem}[Strong Comparison Principle]\label{hthCOMPARISONII}
Let $u,v\in C^1(\overline{\Omega})$ where $\Omega$ is a bounded smooth domain  of $\mathbb{R}^N$ with
$\frac{2N+2}{N+2}<p<\infty$. Suppose that either $u$ or $v$ is a weak solution of  $-\Delta _p (w)=f(w)$ with  $f$
positive ($f(s)>0$ for $s>0$) and locally Lipschitz continuous. Assume
\begin{equation}\label{hthCOMP:LAMB22II}
-\Delta_p (u)-f(u) \leqslant -\Delta_p (v)-f(v)\quad\quad u\leqslant v\quad\mbox{in}\quad\Omega\,.
\end{equation}
 Then $u\equiv v$ in $\Omega$ or $u<v$ in $\Omega$.
\end{theorem}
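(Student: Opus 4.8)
The statement is the Strong Comparison Principle (Theorem \ref{hthCOMPARISONII}): on a bounded smooth domain $\Omega$ with $\frac{2N+2}{N+2}<p<\infty$, if $u,v\in C^1(\overline\Omega)$, one of them solves $-\Delta_p w=f(w)$ with $f$ positive and locally Lipschitz, and $-\Delta_p u-f(u)\le -\Delta_p v-f(v)$ together with $u\le v$ in $\Omega$, then either $u\equiv v$ or $u<v$ throughout $\Omega$. Since this is cited from \cite{DS2}, the plan is to reproduce the argument from there. First I would set $w:=v-u\ge0$ and observe that $w\in C^1(\overline\Omega)$. The goal is to show that the (relatively open) set $Z:=\{x\in\Omega:\ w(x)=0\}$ is either empty or all of $\Omega$; this follows once we prove that $Z$ is also relatively closed-with-respect-to-interior, i.e. that $Z$ has empty boundary inside $\Omega$, which in turn will come from a Harnack/strong-maximum-principle argument applied to $w$ away from the critical set of the $p$-Laplacian.

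The key analytic step is to derive a linear differential inequality for $w$ on the open set $\Omega^*:=\Omega\setminus Z_u$, where (say $v$ is the solution) $Z_u$ is excluded so that $|\nabla u|+|\nabla v|>0$ in a neighbourhood of any point we look at — more precisely, near any point $x_0$ with $w(x_0)=0$ one has $\nabla u(x_0)=\nabla v(x_0)$, and if that common gradient is nonzero then the $p$-Laplacian is uniformly elliptic there. Using the elementary vector inequality controlling $|\nabla v|^{p-2}\nabla v-|\nabla u|^{p-2}\nabla u$ from above and below by $(|\nabla u|+|\nabla v|)^{p-2}|\nabla v-\nabla u|$ (with the usual care for $p<2$ versus $p\ge2$, which is exactly why the range $\frac{2N+2}{N+2}<p<\infty$ appears, matching the regularity theory used in \cite{DS2}), together with the local Lipschitz bound $|f(v)-f(u)|\le L|v-u|$, I would show that in a neighbourhood $B$ of such a point $w$ satisfies weakly a linear inequality of the form $-\mathrm{div}(A(x)\nabla w)+b(x)\cdot\nabla w + L\,w\ge0$ with $A$ uniformly elliptic and bounded, $b\in L^\infty$. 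Then the classical strong maximum principle (or Hopf's lemma, as recalled in Theorem \ref{semihop}) applied to this linear operator forces $w\equiv0$ in $B$, since $w\ge0$ and $w(x_0)=0$. Hence $Z$ is open; being also closed in $\Omega$ by continuity and $\Omega$ connected, $Z=\emptyset$ or $Z=\Omega$, i.e. $u<v$ in $\Omega$ or $u\equiv v$.

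The remaining point — and the main obstacle — is the behaviour at points of the critical set, i.e. points $x_0\in\Omega$ with $w(x_0)=0$ and $\nabla u(x_0)=\nabla v(x_0)=0$. There the operator degenerates and the linear theory above does not apply directly. The plan here follows \cite{DS2}: one shows that such a bad point cannot lie on $\partial Z$ relative to $\Omega$. The argument is that if $w>0$ somewhere arbitrarily close to $x_0$, one exploits the equation $-\Delta_p v=f(v)$ with $f(v)>0$ (so $v$ is a strict $p$-supersolution of $-\Delta_p v=0$) to get, via the strong maximum principle / Hopf's lemma for the $p$-Laplacian (Theorem \ref{semihop}) applied to $v$ near $x_0$ in a suitable ball, quantitative information excluding a critical point of $v$ on the "free boundary" — in fact one uses that along the set where $w=0$ the level structure of $u$ and $v$ coincides, and the positivity of $f$ prevents the degeneracy from accumulating on $\partial Z$. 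Concretely: suppose $x_0\in\partial Z\cap\Omega$; pick a ball $B\Subset\Omega$ with $x_0\in\partial B$ and $w>0$ in $B$; apply the Hopf lemma of Theorem \ref{semihop} to the nonnegative function $w$ on $B$ (noting $w$ satisfies a suitable differential inequality on $B$, where now we may or may not be on the critical set but can use the equation for $v$) to conclude $\partial w/\partial s(x_0)>0$ for the inward normal, contradicting $\nabla w(x_0)=0$. Assembling the two cases (nondegenerate: linear strong maximum principle; degenerate boundary points: Hopf via Theorem \ref{semihop} plus positivity of $f$) yields that $\partial Z\cap\Omega=\emptyset$, hence the dichotomy. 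I would expect the technically delicate part to be the careful ellipticity estimates uniform up to $\partial Z$ in the case $p\ne2$ and the precise handling of the degenerate boundary points, for which one should cite \cite{DS2} (and \cite{V,PSB} for the Hopf-type ingredient) rather than reprove everything.
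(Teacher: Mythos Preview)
The paper does not prove this theorem at all: it is quoted in Section~\ref{preliminaries} as a known result with the reference ``(see \cite{DS2})'' and no argument is given. So there is no paper proof to compare against; your proposal is an attempt to reconstruct the argument of \cite{DS2}, which is more than the present paper does.

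On the substance of your sketch: the nondegenerate part is correct in outline --- near a contact point with $\nabla u(x_0)=\nabla v(x_0)\neq 0$ one linearizes, obtains a uniformly elliptic inequality for $w=v-u$, and applies the classical strong maximum principle. The gap is in your treatment of the degenerate contact points. You propose to pick a ball $B$ with $w>0$ in $B$ and $x_0\in\partial B$, and then ``apply the Hopf lemma of Theorem~\ref{semihop} to the nonnegative function $w$ on $B$''. But Theorem~\ref{semihop} is stated for weak solutions of $-\Delta_p w+cw^q=g\ge 0$, and $w=v-u$ does not satisfy any such equation: the difference of two $p$-Laplacians is not a $p$-Laplacian, and the operator governing $w$ is the linearized one with weight $(|\nabla u|+|\nabla v|)^{p-2}$, which degenerates exactly at the point you want to analyze. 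So neither the quasilinear Hopf lemma nor the uniformly elliptic one is available there, and the contradiction $\partial w/\partial s(x_0)>0$ is not justified.

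The actual route in \cite{DS2} does not try a pointwise Hopf argument at degenerate contact points. Instead it exploits the structural fact (proved in \cite{DS1,DS2} and underpinning the whole weighted framework of the present paper) that for a solution of $-\Delta_p v=f(v)$ with $f>0$ the critical set $\{\nabla v=0\}$ has Lebesgue measure zero and is negligible for the weighted Sobolev space $H^{1,2}_\rho$, $\rho=|\nabla v|^{p-2}$. One then proves a Harnack inequality / strong maximum principle for the \emph{linearized} operator $L_v$ on each connected component of $\Omega\setminus\{\nabla v=0\}$ and uses the smallness of the critical set to propagate the strict inequality across it. That is the missing ingredient you would need to complete the degenerate case.
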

\noindent Let us recall that the linearized operator  $L_u(v,\varphi)$ at a fixed solution $u$ of $-\Delta_p (u)=f(u)$ is well
defined, for every  $v\, ,\, \varphi\in H^{1,2}_{\rho}(\Omega)$  with $\rho\equiv |\nabla u|^{p-2}$(see \cite{DS1} for details), by \ \ \
$$
L_u(v,\varphi) \equiv\int_{\Omega}[|\nabla u|^{p-2}(\nabla v,\nabla \varphi)+(p-2)|\nabla u|^{p-4}(\nabla u,\nabla v)(\nabla u,\nabla \varphi) - f'(u)v\varphi]dx\,.
$$
Moreover, $v\in H^{1,2}_{\rho}(\Omega)$  is a weak solution of the linearized equation if
\begin{equation}\label{EQ:LIN}
L_u(v,\varphi)=0\,,
\end{equation}
for any $\varphi\in H^{1,2}_{0,\rho}(\Omega)$.\\
\noindent By \cite{DS1} we have  $u_{x_i}\in H^{1,2}_{\rho}(\Omega)$
 for $i=1,\ldots ,N$, and
$L_u(u_{x_i},\varphi)$ is well defined
 for every $\varphi\in  H^{1,2}_{0,\rho}(\Omega)$, with
\begin{equation}\label{hthLI:VI}
L_u(u_{x_i},\varphi) =0\quad\quad\forall \varphi\in H^{1,2}_{0,\rho}(\Omega).
\end{equation}
In other words, the derivatives of $u$ are weak solutions of the linearized equation. Consequently by a strong maximum principle for the linearized operator (see \cite{DS2}) we have the following:
\begin{theorem}\label{hthPMFderrr}
Let  $u\in C^1(\overline{\Omega})$  be a weak solution of $-\Delta_p (u)=f(u)$ in a bounded smooth domain $\Omega$ of
$\mathbb{R}^N$ with $\frac{2N+2}{N+2}<p<\infty$, and $f$ positive and locally Lipschitz continuous. Then, for any $i \in \{1,
\dots ,N \}$ and any domain $\Omega '\subset\Omega$ with $ u_{x_{i}}\geqslant 0$ in $\Omega '$, we have
either $u_{x_{i}} \equiv 0$ in $\Omega '$ or $u_{x_{i}} >0$ in $\Omega '$.
\end{theorem}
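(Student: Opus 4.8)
The plan is to deduce the statement from the fact, recorded in \eqref{hthLI:VI}, that each partial derivative $w:=u_{x_i}$ is a weak solution in $H^{1,2}_{\rho}(\Omega)$, $\rho=|\nabla u|^{p-2}$, of the linearized equation $L_u(w,\varphi)=0$, and then to run a strong maximum principle for the degenerate linear operator $L_u$, along the lines of \cite{DS2}. Spelled out, $w$ solves weakly
\[
-\operatorname{div}\big(\mathcal A(x)\nabla w\big)=f'(u)\,w\qquad\text{in }\Omega,
\]
where $\mathcal A(x)=|\nabla u|^{p-2}\big(\mathrm{Id}+(p-2)\,|\nabla u|^{-2}\,\nabla u\otimes\nabla u\big)$, with the convention $\mathcal A\equiv 0$ on the critical set $Z:=\{x\in\Omega:\nabla u(x)=0\}$. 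Since $u\in C^{1,\alpha}_{loc}$, the matrix $\mathcal A$ is symmetric, bounded, nonnegative, and \emph{uniformly} elliptic on every compact subset of $\Omega\setminus Z$; since $f$ is locally Lipschitz, $f'(u)\in L^{\infty}_{loc}$. As $w\ge 0$ on the open connected set $\Omega'$, there we may rewrite the equation as $-\operatorname{div}(\mathcal A\nabla w)+c(x)\,w\ge 0$ with $c:=\max\{-f'(u),0\}\ge 0$ in $L^{\infty}_{loc}$; that is, $w$ is a nonnegative supersolution of a linear operator with a nonnegative zero-order coefficient.

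First I would invoke the classical strong maximum principle away from the degeneracy. On any ball $B$ with $\overline B\subset\Omega'\setminus Z$ the operator above is uniformly elliptic with bounded measurable coefficients, so the weak Harnack inequality applies: if $w$ vanishes at a point of $B$, then $w\equiv 0$ in $B$. Hence $\{w=0\}$ is both relatively closed (by continuity of $w$) and relatively open in $\Omega'\setminus Z$, so on each connected component of $\Omega'\setminus Z$ one has the dichotomy $w\equiv 0$ or $w>0$.

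The hard part is to pass from this component-wise statement to a maximum principle on the connected set $\Omega'$, i.e.\ to cross the critical set $Z$. Two facts make this possible. First, $Z$ is Lebesgue-negligible where $u$ is positive: from $-\Delta_p u=f(u)$, $f>0$, and the regularity theory of \cite{DS1} (which gives $\Delta_p u=0$ a.e.\ on $\{\nabla u=0\}$ as a consequence of $|\nabla u|^{p-2}\nabla u$ being weakly differentiable with gradient vanishing a.e.\ on its zero set) one obtains $|Z\cap\{u>0\}|=0$; in particular $Z$ has empty interior there and it is invisible to the weak formulation. Second, the summability of $1/|\nabla u|$ established in the spirit of \cite{DS1} (cf.\ Proposition \ref{pr:grad}) quantifies the degeneracy of the weight $\rho=|\nabla u|^{p-2}$ and allows one to run a \emph{weighted} Harnack / maximum-principle argument (in the style of Fabes--Kenig--Serapioni) on balls that meet $Z$.

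Putting these together: if $w\not\equiv 0$ in $\Omega'$, then $\{w>0\}$ is nonempty and open, and every boundary point of it inside $\Omega'$ lies in $Z$ (at any other such point the uniformly elliptic strong maximum principle would force $w\equiv 0$ in a neighborhood); since $Z$ has empty interior, $\{w>0\}$ is dense in $\Omega'$, and the weighted estimates across $Z$ then upgrade this to $\{w>0\}=\Omega'$. Otherwise $w\equiv 0$ in $\Omega'$. I expect the genuinely delicate step to be precisely this crossing of the critical set --- the passage from a maximum principle valid off $Z$ to one valid on all of $\Omega'$ --- and the complete argument is the one carried out in \cite{DS2}, which we quote. (In the application to problem~\eqref{E:P} one has $u>0$ in $\mathbb{R}^N_+$ by Theorem~\ref{semihop}, so restricting to $\{u>0\}$ above costs nothing.)
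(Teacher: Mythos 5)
Your proposal takes essentially the same route as the paper: the paper merely records (via \cite{DS1}) that $u_{x_i}\in H^{1,2}_{\rho}(\Omega)$, $\rho=|\nabla u|^{p-2}$, is a weak solution of the linearized equation \eqref{hthLI:VI} and then invokes the strong maximum principle for the linearized operator proved in \cite{DS2}, which is exactly what you do, and your sketch of how that principle is obtained (uniform ellipticity and Harnack off the critical set $Z$, negligibility of $Z$ where $f(u)>0$, and weighted estimates based on the summability of $|\nabla u|^{-1}$ to cross $Z$) is consistent with the argument of \cite{DS2}. Since you, like the paper, ultimately quote \cite{DS2} for the delicate step across the critical set, the two proofs coincide.
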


\section{Preliminary results}\label{section3}
In this paragraph we shall prove some useful results that we need in the proof of the main result. Let us start stating the following:
\begin{basic}\label{rem:GT}
We say that  $u(x)$ satisfies the Condition (PE) in $\Omega$, if
 \begin{equation}\label{eq:hip}
 |u(x)|\leq \hat C \int_\Omega \frac{|\nabla u(y)|}{|x-y|^{N-1}}dy.
\end{equation}
This generally follows by potential estimates, see \cite[Lemma 7.14, Lemma 7.16]{GT}, that gives
\[
u(x)=\hat C\int_{\Omega}\frac{(x_i-y_i)\frac{\partial u}{\partial x_i}(y)}{|x-y|^N}dy\quad\text{a.e.}\,\,(\Omega),
\]
with
\begin{itemize}
\item[$(i)$] $\hat C= \frac{1}{N \omega_N}$ if $u\in W_0^{1,1}(\Omega)$,\\
where $\omega_N$ is the volume of the unit ball in $\mathbb{R}^N$;
\item[$(ii)$] $\hat C= \frac{d^N}{N|S|}$ if $u\in W^{1,1}(\Omega)$ with ${\displaystyle \int_S u=0}$ and $\Omega$ convex, \\
where $d=\text{diam } \Omega$ and $S$ any measurable subset of $\Omega$.
\end{itemize}

\end{basic}
\

Moreover let  $\mu \in (0,1]$, we define
\begin{equation}\label{eq:V}V_\mu[f,U](x)=\int_{U} \frac{f(y)}{|x-y|^{N(1-\mu)}}dy.\end{equation}
  It is well known that (see \cite[pag.159]{GT})
\begin{equation}\label{eq:gilbard}
V_\mu[1,U](x)\leq \mu^{-1}\omega_N^{1-\mu}|U|^\mu.
\end{equation}
Let us state the following:
\begin{lemma}\label{le:potential}
Let us consider $\tilde{\Omega}\subset{\Omega}$ and $V_\mu[f,\Omega](x)$ as in
\eqref{eq:V}. Then for any $1\leq q \leq \infty $ one has
\begin{equation}\label{eq:potential}
||V_\mu[f,\tilde{\Omega}](x)||_{L^q(\Omega)}\leq\left(\frac{1-\delta}{\mu-\delta}\right)^{1-\delta} \omega_n^{1-\mu}|\Omega|^{\mu- \delta}||f||_{L^m(\tilde{\Omega})},
\end{equation}
with $0\leq \delta= {\displaystyle \frac1m - \frac1q}<\mu.$
\end{lemma}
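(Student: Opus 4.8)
\emph{Proof plan.} Estimate \eqref{eq:potential} is essentially Lemma~7.12 of \cite{GT}; the only difference is that in \eqref{eq:V} the potential is integrated over a subdomain $\tilde\Omega\subseteq\Omega$ while the $L^q$-norm is taken over all of $\Omega$, which only helps. I would prove it as a bounded-domain version of Young's convolution inequality for the Riesz kernel $K(z):=|z|^{N(\mu-1)}=|z|^{-N(1-\mu)}$, realized through a three-exponent Hölder inequality, with the elementary volume bound \eqref{eq:gilbard} standing in for the (globally infinite) $L^r$-norm of the kernel. One may assume $f\geq 0$. Set $r:=(1-\delta)^{-1}$, which is exactly the Young exponent fixed by $\tfrac1m+\tfrac1r=1+\tfrac1q$ (equivalently $\tfrac1m-\delta=\tfrac1q$); note $1\leq r\leq q$ since $m\geq1$ and $\delta<\mu\leq1$.

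\emph{Step 1 (pointwise Hölder splitting).} For fixed $x\in\Omega$ and $y\in\tilde\Omega$ I would write
$$K(x-y)\,f(y)=\Big(K(x-y)^{r/q}f(y)^{m/q}\Big)\,\Big(K(x-y)^{\,1-r/q}\Big)\,\Big(f(y)^{\,1-m/q}\Big)$$
and apply Hölder with the three exponents $q$, $a:=(\tfrac1r-\tfrac1q)^{-1}$, $b:=(\tfrac1m-\tfrac1q)^{-1}$; these are admissible and sum correctly, since $\tfrac1q+\tfrac1a+\tfrac1b=\tfrac1r+\tfrac1m-\tfrac1q=1$ by the choice of $r$. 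This gives
$$V_\mu[f,\tilde\Omega](x)\le\Big(\int_{\tilde\Omega}K(x-y)^r f(y)^m\,dy\Big)^{1/q}\,\big\|K(x-\cdot)\big\|_{L^r(\tilde\Omega)}^{\,1-r/q}\,\|f\|_{L^m(\tilde\Omega)}^{\,1-m/q}.$$
The case $q=\infty$ is the degenerate instance in which the first and third factors disappear and one simply uses Hölder with the conjugate pair $(m',m)$, $m'=r$; no separate treatment is then needed.

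\emph{Step 2 (the kernel exponent) and Step 3 (integration in $x$).} The key point is that $K(z)^r=|z|^{N(\mu-1)r}=|z|^{-N(1-\mu^*)}$ with $\mu^*:=1-(1-\mu)r=\tfrac{\mu-\delta}{1-\delta}\in(0,1]$ — this is where $\delta<\mu$ is used, to ensure $\mu^*>0$. Hence for any measurable $D\subseteq\Omega$ and any $z$, $\int_D K(z-w)^r\,dw=V_{\mu^*}[1,D](z)$, which by \eqref{eq:gilbard} is at most $(\mu^*)^{-1}\omega_n^{\,1-\mu^*}|\Omega|^{\mu^*}$. Raising to the power $\tfrac1r=1-\delta$ and using $(1-\mu^*)(1-\delta)=1-\mu$ and $\mu^*(1-\delta)=\mu-\delta$ gives exactly $\|K(z-\cdot)\|_{L^r(D)}\le(\tfrac{1-\delta}{\mu-\delta})^{1-\delta}\omega_n^{\,1-\mu}|\Omega|^{\mu-\delta}=:\mathcal C$, the constant in the statement. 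Applying this with $D=\tilde\Omega$ bounds the middle factor in Step~1 by $\mathcal C^{\,1-r/q}$; then, raising the inequality of Step~1 to the $q$-th power and integrating over $x\in\Omega$, using Tonelli together with the bound (Step~2 with $D=\Omega$) $\int_\Omega K(x-y)^r\,dx\le\mathcal C^{\,r}$ uniform in $y\in\tilde\Omega$, I get
$$\big\|V_\mu[f,\tilde\Omega]\big\|_{L^q(\Omega)}^q\le\Big(\int_{\tilde\Omega}f(y)^m\!\int_\Omega K(x-y)^r\,dx\,dy\Big)\,\mathcal C^{\,q-r}\|f\|_m^{\,q-m}\le\mathcal C^{\,r}\|f\|_m^{\,m}\cdot\mathcal C^{\,q-r}\|f\|_m^{\,q-m}=\mathcal C^{\,q}\|f\|_m^{\,q},$$
and the claim follows upon taking $q$-th roots.

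I do not anticipate a genuine obstacle: this is a classical potential estimate. The one point demanding care is precisely the bounded-domain realization of Young's inequality — the kernel $|z|^{N(\mu-1)r}$ is not integrable on $\R^N$, so one must avoid invoking $\|K\|_{L^r(\R^N)}$ and instead control only the truncated integrals $\int_D|z-w|^{N(\mu-1)r}\,dw$ through \eqref{eq:gilbard}, and then book-keep the exponents so that the separate powers of $\mathcal C$ and of $\|f\|_m$ recombine to exactly $\mathcal C\|f\|_m$. Recovering the sharp constant $(\tfrac{1-\delta}{\mu-\delta})^{1-\delta}\omega_n^{\,1-\mu}|\Omega|^{\mu-\delta}$ is what dictates both the choice $r=(1-\delta)^{-1}$ and the identification $\mu^*=\tfrac{\mu-\delta}{1-\delta}$; any other split would lose it.
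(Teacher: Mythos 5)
Your argument is correct, and it is essentially the paper's proof: the paper simply invokes Lemma~7.12 of \cite{GT}, whose standard proof is exactly your three-exponent H\"older splitting with the truncated kernel bound \eqref{eq:gilbard} (the restriction of the potential to $\tilde\Omega\subset\Omega$ changing nothing). Your exponent bookkeeping ($r=(1-\delta)^{-1}$, $\mu^*=\frac{\mu-\delta}{1-\delta}$) reproduces the stated constant precisely, so no further comment is needed.
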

\begin{proof}
The proof follows  by \cite[Lemma 7.12]{GT}.
\end{proof}

Let us recall from \cite{DS1,DCS} the following:
\begin{proposition}\label{pro:SobConstant}
Let $1<p<\infty$ and  $u\in C^{1,\alpha}(\overline{\Omega})$ a solution to
$$
\begin{cases}
-\Delta_p u=h(x), & \text{ in }\Omega\\
u(x) > 0, & \text{ in }\Omega\\
u(x)=0.&  \text{ on }\partial\Omega
\end{cases}
$$
with $h\in C^1 (\Omega)$.
 Let $\Omega'\subset \subset \Omega$ and $0<\delta <dist(\Omega',\partial\Omega)$ such that $h>0$ in $\overline{\Omega'_\delta}$, where
$$\Omega'_\delta=\{x\in \Omega : d(x,\Omega')< \delta\}\subset \subset \Omega.$$
Consider the finite covering  $\Omega'\subseteq \underset{i=1}{\overset{S}{\cup}}B_\delta(x_i)$  with $x_i\in\Omega'$ and $S=S(\delta)$.
We set

\begin{equation}\label{Notationbar}
\begin{split}
&\overline{M}=\max\,\,\{\underset{y\in\Omega'_\delta}{\sup}\int_{\Omega'_\delta}\frac{1}{|x-y|^\gamma}\,dx\, ;\,\underset{y\in\Omega'_\delta}{\sup}\int_{\Omega'_\delta}\frac{1}{|x-y|^{\gamma+1}}\,dx\, ;\, \underset{y\in\Omega'_\delta}{\sup}\int_{\Omega'_\delta}\frac{1}{|x-y|^{\gamma +2}}\,dx\,\,\},\\
&\overline{K}=\underset{x\in \Omega'_\delta}{\sup} |\nabla u(x)|<\infty,\\
&\overline{W}=\underset{x\in \Omega'_\delta}{\sup} |\nabla h (x)|,\\
\end{split}
\end{equation}
for $\gamma< N-2$ if $N\geq 3$, or $\gamma=0$ if $N=2$.
Also let
\begin{equation}
\begin{split}
 &  0<\overline{\mu}\leqslant \underset{x\in \Omega'_\delta}{\inf}\, h(x).\\
 \end{split}
\end{equation}

Then we get:
\begin{equation}\label{drdrdbisssete}
\begin{split}
&\int_{\Omega'} \frac{1}{|\nabla u|^{\tau}}\frac{1}{|x-y|^\gamma}\leqslant \overline{\mathcal{C}}^*\\
\end{split}
\end{equation}
with $\max\{(p-2)\, ,\,0\}\leqslant \tau<p-1$ and

\begin{equation}
\begin{split}
&\overline{\mathcal{C}}^*=\overline{\mathcal{C}}^*(\bar \mu\, ,\,p\, ,\,\gamma\, ,\, \tau\, ,\, f\, ,\, \|u\|_\infty\, ,\, \|\nabla u\|_\infty\, ,\,\delta\,,\,N)=
\\
&\frac{2\,S}{\overline{\mu}}\Bigg[
\frac{N^2\tau^2\,\cdot\overline{M}\cdot \max\{(p-1)\, ,\, 1\}^2 }{\overline\mu(p-\tau-1)} \left ( \frac{\gamma ^2\cdot \overline{K}^{2p-2-\tau}}{p-\tau-1}+\frac{4\overline{K}^{2p-2-\tau}}{(p-\tau-1)\delta^2}+
\frac{\overline{K}^{p-\tau-1}}{p-1}  \overline{W} \right )
\\
&+\gamma \overline{K}^{p-1-\tau}\cdot \overline{M}+\frac{2}{\delta} \cdot \overline{K}^{p-1-\tau}\cdot \overline{M}\Bigg].
\end{split}
\end{equation}
\end{proposition}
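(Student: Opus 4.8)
\emph{Proof strategy.}
The estimate is a quantitative sharpening of the summability bounds for $\tfrac{1}{|\nabla u|}$ from \cite{DS1,DCS}, and the plan is to rerun that scheme while keeping explicit track of every constant. The first step is a reduction to a single ball. Given the finite covering $\Omega'\subseteq\bigcup_{i=1}^{S}B_\delta(x_i)$, for each $i$ I would fix $\zeta_i\in C_c^\infty(B_\delta(x_i))$ with $\zeta_i\equiv1$ on the part of $\Omega'$ it covers and $|\nabla\zeta_i|\le c/\delta$, so that
$$\int_{\Omega'}\frac{1}{|\nabla u|^{\tau}}\frac{1}{|x-y|^{\gamma}}\,dx\ \le\ \sum_{i=1}^{S}\int_{B_\delta(x_i)}\frac{\zeta_i^{2}}{|\nabla u|^{\tau}}\frac{1}{|x-y|^{\gamma}}\,dx\,,$$
which explains the factor $S$ in $\overline{\mathcal C}^*$; since all relevant quantities on $B_\delta(x_i)\subset\Omega'_\delta$ are dominated by $\overline M,\overline K,\overline W,\overline\mu$, and $\overline M$ is already a supremum over $y$, it suffices to bound one such summand by a constant that is uniform in $i$ and in $y\in\Omega'_\delta$. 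From here I work on a single ball $B=B_\delta(x_i)$ with cut-off $\zeta=\zeta_i$.

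The mechanism that makes the weighted integral finite is $h\ge\overline\mu>0$. I would test a regularized version of $-\dvg(|\nabla u|^{p-2}\nabla u)=h$ (replacing $|\nabla u|^{-\tau}$ by $(|\nabla u|^2+\eta)^{-\tau/2}$ and $|x-y|^{-\gamma}$ by $(|x-y|^2+\eta)^{-\gamma/2}$, estimating uniformly in $\eta$ and letting $\eta\to0^+$) against $\psi:=\zeta^{2}\,|x-y|^{-\gamma}\,|\nabla u|^{-\tau}\ge0$, which is admissible because $u_{x_k}\in H^{1,2}_{\rho}(B)$ with $\rho=|\nabla u|^{p-2}$ (by \cite{DS1}), i.e.\ $\int_B|\nabla u|^{p-2}|D^2u|^2<\infty$ — so no $W^{2,2}$ regularity is needed — and because $\gamma<N-2$ keeps $|x-y|^{-\gamma-2}$ locally integrable. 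Since $\psi\ge0$, the right-hand side is $\ge\overline\mu\int_B\psi$, which is the target quantity; expanding $\nabla\psi$ on the left produces, besides this, two kinds of terms. The terms where a derivative hits $\zeta^2$ or $|x-y|^{-\gamma}$ are lower order: $p-1-\tau>0$ makes $|\nabla u|^{p-1-\tau}\le\overline K^{p-1-\tau}$, and with $|\nabla\zeta|\le c/\delta$ and $|\nabla|x-y|^{-\gamma}|\le\gamma|x-y|^{-\gamma-1}$ they contribute at most $(\gamma+\tfrac2\delta)\,\overline K^{p-1-\tau}\overline M$, exactly the last bracket of $\overline{\mathcal C}^*$. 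The only delicate term is the one where the derivative hits $|\nabla u|^{-\tau}$, namely $-\tau\int_B|\nabla u|^{p-2-\tau}\zeta^2|x-y|^{-\gamma}\,\tfrac{(D^2u\,\nabla u)\cdot\nabla u}{|\nabla u|^{2}}$.

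To dispose of this term I would bound it by $\tau\int_B|\nabla u|^{p-2-\tau}|D^2u|\,\zeta^2|x-y|^{-\gamma}$, split $|\nabla u|^{p-2-\tau}=|\nabla u|^{-\tau/2}\cdot|\nabla u|^{p-2-\tau/2}$, and apply Young's inequality tuned so that the $|\nabla u|^{-\tau}$-part is reabsorbed into $\tfrac12\overline\mu\int_B\psi$; what remains is $\tfrac{\tau^{2}}{2\overline\mu}\int_B|\nabla u|^{2p-4-\tau}|D^2u|^2\zeta^2|x-y|^{-\gamma}$. Because $\tau\ge p-2$, this equals $\int_B|\nabla u|^{p-2}|D^2u|^2\,|\nabla u|^{-\sigma}\zeta^2|x-y|^{-\gamma}$ with $\sigma=\tau-p+2\in[0,1)$: a \emph{weighted} Caccioppoli quantity. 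I would estimate it by testing the linearized equations $L_u(u_{x_k},\varphi)=\int h_{x_k}\varphi$ with $\varphi=u_{x_k}\,\zeta^2|x-y|^{-\gamma}|\nabla u|^{-\sigma}$ and summing over $k=1,\dots,N$; using $\min\{1,p-1\}|\nabla u|^{p-2}|\xi|^2$ from below and $\max\{1,p-1\}|\nabla u|^{p-2}$ from above for the linearized coefficients, integrating by parts the $\nabla\zeta$, $\nabla|x-y|^{-\gamma}$ and $\nabla|\nabla u|^{-\sigma}$ contributions and reabsorbing once more, one arrives at a bound in which $|\nabla u|^{2(p-1)-\tau}\le\overline K^{2p-2-\tau}$ (here $2p-2-\tau>0$), $|\nabla|x-y|^{-\gamma}|^2/|x-y|^{-\gamma}\sim\gamma^2|x-y|^{-\gamma-2}$ gives $\gamma^2\overline M$, $|\nabla\zeta|^2$ gives $\tfrac1{\delta^2}\overline M$, and $|\nabla h|\le\overline W$ together with a further use of $h\ge\overline\mu$ gives the $\tfrac1{p-1}\overline W$ term, the whole expression carrying a factor $N^2\tau^2\max\{1,p-1\}^2$ (dimension, the $\sigma\le\tau$ derivative factor squared by Young, ellipticity) and a denominator $\overline\mu(p-\tau-1)$ — the latter being precisely the integrability threshold of the power-law singularity of the weight near $\{\nabla u=0\}$, which is where the hypothesis $\tau<p-1$ enters. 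Collecting the three groups, absorbing, multiplying by $S$ and by the overall $1/\overline\mu$ reproduces $\overline{\mathcal C}^*$.

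The crux is this last term: one must control $D^2u$ near the critical set of $u$ without any pointwise lower bound on $|\nabla u|$, which forces the coupled weighted Caccioppoli estimate and a double absorption; turning the qualitative argument of \cite{DS1,DCS} into the stated explicit constant — in particular carrying along the numerous harmless dimensional and algebraic factors — is the bulk of the work. It is also worth noting that $\overline{\mathcal C}^*$ genuinely degenerates as $\delta\to0$ (through the $\delta^{-2}$ terms and through $S=S(\delta)$), which is exactly why, when this Proposition is later applied at the start of the moving-plane procedure with $\beta$ (hence $\delta$) small, it must be combined with a scaling argument, as anticipated in Section \ref{schemeproofwww}.
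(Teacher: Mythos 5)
The paper itself contains no proof of Proposition \ref{pro:SobConstant}: it is recalled verbatim from \cite{DS1,DCS}, and your reconstruction follows exactly the route of those references — localize on the covering balls (factor $S$), test the regularized equation with $\zeta^2|x-y|^{-\gamma}|\nabla u|^{-\tau}$, use $h\geq\overline{\mu}$ to generate the target integral, absorb the Hessian term by Young's inequality (producing the $\tau^2/\overline\mu$ factor), and control the resulting weighted Caccioppoli quantity $\int|\nabla u|^{p-2-\sigma}|D^2u|^2\zeta^2|x-y|^{-\gamma}$, $\sigma=\tau-(p-2)\in[0,1)$, by testing the differentiated (linearized) equation with $u_{x_k}\zeta^2|x-y|^{-\gamma}|\nabla u|^{-\sigma}$. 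The structure of $\overline{\mathcal C}^*$ (the terms $\gamma\overline K^{p-1-\tau}\overline M$ and $\frac{2}{\delta}\overline K^{p-1-\tau}\overline M$ from the cut-off and kernel derivatives, the $\overline K^{2p-2-\tau}$ and $\overline W\overline K^{p-\tau-1}$ terms, the overall $2S/\overline\mu$) is consistent with your computation. (Minor cosmetic point: a cut-off supported in $B_\delta(x_i)$ cannot equal $1$ on all of $B_\delta(x_i)\cap\Omega'$; take the covering by $B_{\delta/2}(x_i)$ and cut-offs vanishing outside $B_\delta(x_i)$, which is what produces the $2/\delta$, $4/\delta^2$ factors.)

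There is, however, one step where your argument as written would fail, and it is precisely the step on which the full range $\max\{p-2,0\}\leq\tau<p-1$ depends. In the Caccioppoli estimate you propose to handle the term coming from $\nabla\big(|\nabla u|^{-\sigma}\big)$ by using the two-sided ellipticity bounds $\min\{1,p-1\}|\nabla u|^{p-2}|\xi|^2$ and $\max\{1,p-1\}|\nabla u|^{p-2}|\xi|^2$ and "reabsorbing". With those crude bounds the bad term is only dominated by $\sigma\max\{1,p-1\}\int|\nabla u|^{p-2-\sigma}|\nabla|\nabla u||^2\zeta^2|x-y|^{-\gamma}$, while the coercive term gives $\min\{1,p-1\}\int|\nabla u|^{p-2-\sigma}|D^2u|^2\zeta^2|x-y|^{-\gamma}$, so absorption requires $\sigma\max\{1,p-1\}<\min\{1,p-1\}$, i.e.\ (for $p>2$) $\tau<p-2+\frac{1}{p-1}$, which is strictly smaller than $p-1$ — and the regime actually used later (Theorem \ref{th:wcpstrip}, where $\tau=(p-1)r$ with $r\approx1$) is exactly the one you would miss. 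The fix, which is the heart of \cite[Theorem 2.2]{DS1}, is to keep the exact structure of the summed terms: since $\sum_k u_{x_k}\nabla u_{x_k}=|\nabla u|\,\nabla|\nabla u|$ and $\sum_k(\nabla u\cdot\nabla u_{x_k})^2=|\nabla u|^2|\nabla|\nabla u||^2$, the bad term equals $-\sigma$ times the \emph{same} quadratic form (principal part plus the $(p-2)$-part) evaluated at $\nabla|\nabla u|$, so it is absorbed by the corresponding portion of the coercive term for every $\sigma<1$, with a constant of order $(1-\sigma)^{-1}=(p-1-\tau)^{-1}$ — which is exactly the denominator appearing in $\overline{\mathcal C}^*$. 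With that correction (and the usual $\varepsilon$-regularization of $|\nabla u|^{-\sigma}$ to justify the test functions), your outline does reproduce the quantitative statement.
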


%%%%%%%%%%%%%%%%%%%%%%%%%%%%%%%%%%%%%%%%%%%%%%%%%%%%%%%%%%%%%%%%%%%%%%%%%%%%%%%%%%%%%%%%%%%%%

\begin{corollary}\label{cor:SobConstant}
With the same hypotheses of Proposition \ref{pro:SobConstant}, assume that  ($f_1$) holds and that
$|\nabla u|$ is bounded. Then
\begin{equation}\label{eq:SobConstant}
\overline{\mathcal{C}}^*\leq C\,\frac{S(\delta)}{\delta^2}\frac{1}{{\displaystyle{a^2\big(\inf_{\Omega'_\delta} u\big)^{2q}}}}\,.
\end{equation}
If else ($f_2$) holds, setting $\sigma =\sigma (\|u\|_\infty)$, we get
\begin{equation}\label{eq:SobConstantbissettretre}
\overline{\mathcal{C}}^*\leq C\,\frac{S(\delta)}{\delta^2}\frac{1}{\sigma^{2q}}\,.
\end{equation}

\end{corollary}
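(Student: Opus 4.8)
The statement to prove is Corollary \ref{cor:SobConstant}, which asserts that the explicit constant $\overline{\mathcal{C}}^*$ from Proposition \ref{pro:SobConstant} can be bounded, under hypothesis ($f_1$) (resp. ($f_2$)), by a simpler expression depending essentially on $S(\delta)/\delta^2$ and a negative power of $\inf_{\Omega'_\delta} u$ (resp. of $\sigma$). The plan is to revisit the closed-form expression for $\overline{\mathcal{C}}^*$ and estimate each of its constituent pieces from above in terms of the quantities that are allowed to appear on the right-hand side of \eqref{eq:SobConstant}, absorbing everything that depends only on fixed data ($p,\gamma,\tau,N$, $\|u\|_\infty$, $\|\nabla u\|_\infty$, and $f$ through $a,A,q$) into a single constant $C$.

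\textbf{Step 1: control $\overline{\mu}$ from below.} Under ($f_1$) we have $h=f(u)$ on $\Omega'_\delta$ (since $u$ solves $-\Delta_p u = f(u)$), hence $h(x)=f(u(x))\geq a\, u(x)^q \geq a\big(\inf_{\Omega'_\delta} u\big)^q$, so we may take $\overline{\mu} = a\big(\inf_{\Omega'_\delta} u\big)^q$. Every occurrence of $1/\overline{\mu}$ in the formula for $\overline{\mathcal{C}}^*$ then becomes $\le \big(a(\inf_{\Omega'_\delta}u)^q\big)^{-1}$; since $\overline{\mathcal{C}}^*$ contains the factor $\tfrac{2S}{\overline\mu}$ and one further factor $\tfrac{1}{\overline\mu}$ inside the bracket, these two together produce the $a^{-2}(\inf_{\Omega'_\delta}u)^{-2q}$ appearing in \eqref{eq:SobConstant}. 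For ($f_2$), using $f(s)\geq c_f s^{p-1}$ on $[0,s_0]$ together with the positivity and local Lipschitz continuity of $f$ on the compact range $[0,\|u\|_\infty]$, one gets a uniform lower bound $h\ge \sigma^q$ for a suitable $\sigma=\sigma(\|u\|_\infty)>0$ (this is exactly the definition of $\sigma$ in the statement), giving $1/\overline{\mu}\le \sigma^{-q}$ and hence the $\sigma^{-2q}$ factor in \eqref{eq:SobConstantbissettretre}.

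\textbf{Step 2: control the remaining geometric and gradient quantities.} Since $|\nabla u|$ is bounded, $\overline{K}=\sup_{\Omega'_\delta}|\nabla u|\le \|\nabla u\|_\infty$, and all the powers $\overline{K}^{2p-2-\tau}$, $\overline{K}^{p-\tau-1}$, $\overline{K}^{p-1-\tau}$ are bounded by constants depending only on $\|\nabla u\|_\infty$, $p$, $\tau$ (note $\tau<p-1$ keeps $p-1-\tau>0$, and $2p-2-\tau>0$ as well). For $\overline{W}=\sup_{\Omega'_\delta}|\nabla h|$: under ($f_1$), $\nabla h = f'(u)\nabla u$, so $|\nabla h|\le A\,u^{q-1}\|\nabla u\|_\infty \le A\|u\|_\infty^{q-1}\|\nabla u\|_\infty$ (for $q\ge 1$; if $p-1<q<1$ one uses the extra structural hypothesis in ($f_1$), but in the range $2<p<3$ relevant to $(H_1)$ one has $q>p-1>1$), again a fixed constant; under ($f_2$), $|\nabla h|\le L_f\|\nabla u\|_\infty$ with $L_f$ the Lipschitz constant of $f$ on $[0,\|u\|_\infty]$. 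The geometric quantity $\overline{M}$ is a supremum of integrals of $|x-y|^{-\gamma}$, $|x-y|^{-\gamma-1}$, $|x-y|^{-\gamma-2}$ over $\Omega'_\delta$; with $\gamma<N-2$ (or $\gamma=0$ if $N=2$) these are convergent and bounded by a constant depending only on $N$, $\gamma$ and $\mathrm{diam}(\Omega'_\delta)$, which itself is controlled by the geometry — in the application $\Omega'$ is a fixed-shape slab so this is harmless. The powers of $\delta$: the bracket contains terms with $1/\delta^2$ and $1/\delta$; since in the regime of interest $\delta$ is small, $1/\delta \le 1/\delta^2$ up to a constant, so all $\delta$-dependence is absorbed into $1/\delta^2$.

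\textbf{Step 3: assemble.} Collecting Steps 1–2, every factor in $\overline{\mathcal{C}}^*$ other than $S=S(\delta)$, the two copies of $1/\overline{\mu}$, and the powers of $1/\delta$ is bounded by a constant $C=C(p,\gamma,\tau,N,\|u\|_\infty,\|\nabla u\|_\infty,f)$; the $1/\delta$ and $1/\delta^2$ terms combine to $C/\delta^2$; and the two $1/\overline{\mu}$ factors give $a^{-2}(\inf_{\Omega'_\delta}u)^{-2q}$ in case ($f_1$) and $\sigma^{-2q}$ in case ($f_2$). This yields \eqref{eq:SobConstant} and \eqref{eq:SobConstantbissettretre} respectively. \emph{The only mildly delicate point} is bookkeeping the behavior as $\delta\to 0$: one must check that $S(\delta)$ (the number of $\delta$-balls needed to cover $\Omega'$) and $\overline{M}$ interact correctly — but $\overline{M}$ does not blow up as $\delta\to 0$ (the integrals are over the fixed set $\Omega'_\delta\subset\Omega'_{\delta_0}$, which shrinks), while $S(\delta)/\delta^2$ is exactly the quantity retained, so no hidden blow-up occurs. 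I expect Step 1 (the lower bound on $\overline\mu$, especially the construction of $\sigma$ in case ($f_2$)) to be the substantive point, everything else being a routine term-by-term majorization.
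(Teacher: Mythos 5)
Your proposal is correct and follows essentially the same route as the paper: the paper's proof of Corollary \ref{cor:SobConstant} is just the remark that it suffices to apply Proposition \ref{pro:SobConstant}, i.e. precisely the term-by-term majorization you carry out (choosing $\overline{\mu}=a\big(\inf_{\Omega'_\delta}u\big)^q$ under ($f_1$), a uniform positive lower bound for $h$ under ($f_2$), bounding $\overline{K}$, $\overline{W}$, $\overline{M}$ by constants depending on the fixed data, and absorbing the $1/\delta$ terms into $1/\delta^2$). Your bookkeeping simply makes explicit what the paper leaves implicit.
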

\begin{proof}
It is sufficient to apply Proposition \ref{pro:SobConstant}.
\end{proof}

\begin{remark}\label{countingSdelta}
Later we will exploit Corollary \ref{cor:SobConstant} in the case when the domain is a cube in $\mathbb{R}^N$ of diameter $d$. In this case, the reader may easy deduce that
\[
S(\delta)\leq C\left(\big[\,\frac{d}{\delta}\,\big]+1\right)^N
\]
for some constant  $C$ that only depends on the dimension of  $\mathbb{R}^N$.
\end{remark}

Later we will take advantage of Corollary \ref{cor:SobConstant} in some cases. We will anyway need some refined estimates
on the constant   $\overline{\mathcal{C}}^*$ in the case when ($f_1$) holds. To do this we start with the following:

%%%%%%%%%%%%%%%%%%%%%%%%%%%%%%%%%%%%%%%%%%%%%%%%%%%%%%%%%%%%%%%%%%%%%%%%%%%%%%%%%%%%%%%%%%%%%%%%%%%%%%

\begin{proposition}\label{pr:grad}
Let $u\in C^{1,\alpha}$ be a  solution to \eqref{E:P},
and assume that  $|\nabla u|$ is bounded.\\
\noindent Consider  $0<\beta<\beta_0$ and denote
$$ \Sigma_{(\beta,2\beta)}=\left\{ (x',y): x'\in \mathbb{R}^{N-1}, y\in[\beta,2\beta]\right\}\,.$$
If ($f_1$) or ($f_2$) is satisfied, then it follows that there exists a positive constant $C=C(\beta_0 )$
such that
$$|\nabla u|\leq \frac{C}{\beta}\,u,\qquad \forall x\in \Sigma_{\beta-\frac{\beta}{4},\beta+\frac{\beta}{4}}. $$
\end{proposition}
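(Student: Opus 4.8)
The statement asserts a pointwise gradient bound $|\nabla u| \leq \frac{C}{\beta}\,u$ in a thin slab near $y=\beta$, which is a Harnack-type estimate for the gradient coupled with the equation. The natural strategy is a rescaling argument that normalizes the slab $\Sigma_{(\beta,2\beta)}$ to a fixed-size slab, applies interior $C^{1,\alpha}$ estimates together with the Harnack inequality, and then reads the result back in the original variables. Concretely, I would fix a point $x_0 = (x_0',y_0)$ with $y_0 \in [\beta - \tfrac\beta4, \beta + \tfrac\beta4]$ and set $u_\beta(z) := \frac{1}{\beta}\, u(x_0' + \beta z', \, \beta z_N)$, or some similar normalization, so that $u_\beta$ solves a $p$-Laplace equation $-\Delta_p u_\beta = \beta^{p} f(\beta\, \cdot\, u_\beta)$ — wait, one must track the scaling exponent carefully: since $\Delta_p$ is $(p-1)$-homogeneous in second derivatives and the dilation by $\beta$ contributes $\beta^{-p}$, the rescaled right-hand side is $\tilde f(z) := \beta^{p-1} f(u(x_0' + \beta z', \beta z_N))$. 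The point is that under $(f_1)$ or $(f_2)$, using that $|\nabla u|$ is bounded and $\beta \leq \beta_0$, this rescaled right-hand side is controlled in $L^\infty_{loc}$ by a constant depending only on $\beta_0$ and $\|u\|$, uniformly in $\beta$ and $x_0$.

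The key steps, in order, would be: (1) perform the rescaling so that the slab becomes of unit width and the equation becomes uniformly controlled; (2) invoke the local $C^{1,\alpha}$ / gradient estimates of DiBenedetto–Tolksdorf–Lieberman (cited in the paper as \cite{Di,Li,T}) on a unit ball contained in the rescaled slab to obtain $\sup |\nabla u_\beta| \leq C\big(\sup u_\beta + 1\big)$ on a slightly smaller ball; (3) apply the Harnack inequality for $p$-Laplace-type equations to $u_\beta \geq 0$ to bound $\sup u_\beta$ on that ball by $C\, u_\beta(0)$ (plus a lower-order term coming from the right-hand side, which again rescales favorably) — here one uses that $u_\beta$ vanishes nowhere in the interior since $u>0$ in $\mathbb R^N_+$ by the strong maximum principle, Theorem \ref{semihop}; (4) undo the rescaling: $|\nabla u_\beta|(0) = |\nabla u|(x_0)$ and $u_\beta(0) = \frac{1}{\beta} u(x_0)$, so $|\nabla u|(x_0) \leq \frac{C}{\beta} u(x_0)$ as desired, with $C$ depending only on $p$, $N$, $\beta_0$, $\|u\|_\infty$, $\|\nabla u\|_\infty$ and the structural constants in $(f_1)$ or $(f_2)$.

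The main obstacle I anticipate is making the scaling of the right-hand side and of the Harnack/gradient estimates genuinely uniform in $\beta$ near $0$: one must verify that the lower-order contributions from $\tilde f$ (which carries a factor $\beta^{p-1}$ for $p>2$, hence is small, a favorable direction) do not spoil the Harnack constant, and that the $C^{1,\alpha}$ estimate's dependence on the sup-norm of the solution is handled by the rescaling $u_\beta = \beta^{-1} u$ rather than introducing a new large factor. A secondary delicate point is the choice of slab: the estimate is claimed on $\Sigma_{\beta - \beta/4,\, \beta + \beta/4}$ while the equation is used on the larger $\Sigma_{(\beta,2\beta)}$, so the rescaled ball around each $x_0$ must be taken small enough to stay inside the rescaled image of $\Sigma_{(\beta,2\beta)}$, which fixes the geometric constants but requires a brief check that $[\beta-\tfrac\beta4,\beta+\tfrac\beta4]$ together with a ball of the right radius in the $x'$-directions still fits. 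Once these uniformities are in place, the conclusion follows by combining the three classical ingredients exactly as above.
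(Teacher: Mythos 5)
Your strategy (rescale the slab to unit size, then combine interior gradient estimates with the Harnack inequality) is in the same family as the paper's argument, but the way you have set it up contains a genuine gap. With the normalization $u_\beta(z)=\beta^{-1}u(x_0'+\beta z',\beta z_N)$ and the gradient estimate in the form $\sup|\nabla u_\beta|\le C(\sup u_\beta+1)$, the best you can conclude after undoing the scaling is $|\nabla u|(x_0)\le \frac{C}{\beta}u(x_0)+C$, which is strictly weaker than the claimed proportional bound $|\nabla u|\le\frac{C}{\beta}u$. The additive ``$+1$'', and likewise the inhomogeneous lower-order term in the Harnack inequality for $-\Delta_p u_\beta=g$, cannot be absorbed into $C\,u_\beta(0)=C\,u(x_0)/\beta$, because no positive lower bound on $u$ in the slab is available: $u(x_0)$ may be arbitrarily small compared with any fixed power of $\beta$ as $x_0'\to\infty$ (this possible decay of $u$ far from the boundary is exactly the difficulty the paper emphasizes, and the proportional form of the estimate is what Proposition \ref{pr:gradbis} later needs). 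So steps (2)--(3), as you state them, do not close. There is also a computational slip: with your normalization one gets $-\Delta_p u_\beta=\beta f(u(\cdot))$, not $\beta^{p-1}f(u(\cdot))$; this is harmless for boundedness since $\beta\le\beta_0$, but it signals that the scaling was not carried through.

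The paper's proof avoids this by normalizing with the value of the solution rather than with $\beta$: it sets $w_\beta^0=u(\beta x'+x_0',\beta y)/u_0$ with $u_0=u(x_0',\tfrac32\beta)$, and uses ($f_1$), namely $f(u)\le A u^q$ with $q\ge p-1$ and $u$ bounded, to rewrite the rescaled equation as $-\Delta_p w_\beta^0=d(x)\,(w_\beta^0)^{p-1}$ with $\|d\|_\infty$ bounded uniformly in $\beta$ and $x_0'$. Because the right-hand side then has the same $(p-1)$-homogeneity as the operator, the Harnack inequality of \cite{PSB} applies in its homogeneous form and gives $\sup w_\beta^0\le C_H\inf w_\beta^0\le C_H$ (since $w_\beta^0=1$ at the center), with no additive term; the interior gradient estimate of \cite{Di} then yields $|\nabla w_\beta^0|\le C$ uniformly, and scaling back gives $|\nabla u|\le\frac{C}{\beta}u_0\le\frac{C\,C_H}{\beta}u$, all constants independent of how small $u_0$ is. If you insist on the $u/\beta$ normalization, you would have to supply the analogous bookkeeping yourself: note that $g:=\beta f(u)\le A\beta^{\,q+1}u_\beta^{\,q}\le d(x)\,u_\beta^{\,p-1}$ with $d$ bounded (using $q>p-1$ and $u_\beta\le C\|\nabla u\|_\infty$ by the mean value theorem), so that the homogeneous Harnack inequality applies, and use the gradient estimate in its scaling-correct form $\|\nabla u_\beta\|_{L^\infty(B_{1/2})}\le C\bigl(\|u_\beta\|_{L^\infty(B_1)}+\|g\|_\infty^{1/(p-1)}\bigr)$ together with the verification $\|g\|_\infty^{1/(p-1)}\le C\,u_\beta(0)$, which again hinges on $q>p-1$. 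Your phrases ``$+1$'' and ``rescales favorably'' sit precisely where this structural use of ($f_1$) must enter, and that is the part left unproved.
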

\begin{proof}

\noindent Let us assume that ($f_1$) holds and  let $x'_0$ be fixed and $\mathcal{K}(x'_0)$ be defined by
\[
\mathcal {K}(x'_0)= B_{\beta\sqrt{N}}(x'_0) \times (\beta, 2\beta).
\]
and $B_{\beta\sqrt{N}}(x'_0)$ is the ball in $\mathbb{R}^{N-1}$ of radius $\beta\sqrt{N}$ around $x'_0$.
 Also let us set $u_0= u(x'_0,\frac{3}{2}\beta)$ and
\begin{equation}\label{ghhghgjfjfj}
w_\beta^0(x',y)=\frac{ u((\beta\,x'+x'_0),\beta y)}{u_0} \,.
\end{equation}
Moreover let  $\mathcal{K}_T(x'_0)$ to be the set corresponding to $\mathcal{K}(x'_0)$ via the map:
\[
(x'\,,\,y)\longrightarrow T(x',y)\,:=\, (\beta \,x'+x'_0,\beta\,y)\,,
\]
 that is, $\mathcal{K}_T(x'_0)\,:=\,T^{-1}(\mathcal{K}(x'_0))$. \\
 \noindent In the following we will use the fact that in  domains like $\mathcal{K}(x'_0)$ and  $\mathcal{K}_T(x'_0)$ (or in some their neighborhood) the Harnack constant in the the Harnack inequality
 ( see~\cite[Theorem 7.2.2]{PSB}) is uniformly bounded (not depending on $\beta$).

It follows by ($f_1$) that
\begin{equation}\label{eq:grad:u1}
-\Delta_p\,w_\beta^0=\beta ^p\frac{f(u( (\beta x'+x'_0),\beta y))}{u_0^{p-1}}=d(x)(w_\beta^0)^{p-1} \quad \text{in}\,\,\, \mathcal{K}_T^\mu(x'_0)\,,
\end{equation}
being $\mathcal{K}_T^\mu(x'_0)$ a neighborhood of radius $\mu>0$ of $\mathcal{K}_T(x'_0)$. We may consider e.g. $\mu=\frac{1}{2}$.
Since  $q\geq p-1$, $d(x)$ is uniformly bounded in $\mathcal{K}_T^\mu(x'_0)$ and  we can  exploit Harnack inequality
to get
$$\sup_{\mathcal{K}_T^\mu(x'_0) } w_\beta^0(x',y) \leq C_H \inf_{\mathcal{K}_T^\mu(x'_0)}w_\beta^0(x',y)\leq C_H ,$$
where we also used \eqref{ghhghgjfjfj}.

\

\noindent We can therefore exploit the interior gradient estimates  in \cite{Di}, see in particular Theorem~1 in \cite{Di}, to  get
\[
|\nabla w_\beta^0|\leq C\, \quad \text{in}\,\,\, \mathcal{K}^{\frac{1}{4}}_T(x'_0)\,.
\]
Note that the distance from the boundary of the set $\mathcal{K}_T(x'_0)$ is fixed by construction. Scaling back we get
\[
|\nabla u|\leq \frac{C}{\beta}\, u_0\leq \frac{C\,C_H}{\beta} u\quad \text{in}\,\,\, T(\mathcal{K}^{\frac{1}{4}}_T(x'_0))\,,
\]
using again Harnack inequality. The thesis follows now recalling that  $x_0$ was arbitrary chosen.

\end{proof}

\begin{proposition}\label{pr:gradbis}
Let $u\in C^{1,\alpha}$ be a  solution to \eqref{E:P},
and assume that  $|\nabla u|$ is bounded.\\
\noindent Consider  $0<\beta<\beta_0$ and let $ \Sigma_{(\beta,2\beta)}$ defined as above.
If ($f_1$)  is satisfied, then there exists a constant $C=C(\beta_0)>0$ such that, for any $x_0'\in\mathbb{R}^{N-1}$, it follows
\begin{equation}\label{drdrdbisssetetritthuhueses}
\begin{split}
&\int_{\mathcal{K}(x'_0)} \frac{1}{|\nabla u|^{\tau}}\frac{1}{|x-y|^\gamma}\leq C\,\beta^{(N+\tau-2p-\gamma)}\,u_0^{2p-2q-2-\tau}
\end{split}
\end{equation}
where $u_0= u(x'_0,\frac{3}{2}\beta)$ and
 $\mathcal{K}(x'_0)$ is defined by $ \mathcal {K}(x'_0)= B_{\beta\sqrt{N}}(x'_0) \times (\beta, 2\beta)$,  $\gamma< N-2$ if $N\geq 3$, or $\gamma=0$ if $N=2$ and $\max\{(p-2)\, ,\,0\}\leqslant \tau<p-1$.
\end{proposition}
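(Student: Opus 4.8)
My approach is to derive \eqref{drdrdbisssetetritthuhueses} from the quantitative weighted estimate of Proposition~\ref{pro:SobConstant} — in the packaged form of Corollary~\ref{cor:SobConstant} — applied not to $u$ directly but to a \emph{rescaled} solution, the rescaling being chosen so that the underlying domain becomes independent of $\beta$; all the dependence on $\beta$ and on $u_0$ will then be pushed into the rescaled lower constant ``$a$''. We may assume $u\not\equiv 0$: if $u\equiv 0$ then $u_0=0$ is raised to the negative exponent $2p-2q-2-\tau$ (negative since $q>p-1$ forces $2p-2q-2<0$, and $\tau\ge 0$), so the right--hand side of \eqref{drdrdbisssetetritthuhueses} is $+\infty$ and there is nothing to prove. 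Then, by Theorem~\ref{semihop}, $u>0$ in $\mathbb{R}^N_+$, so $u_0:=u(x'_0,\tfrac32\beta)>0$.

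Following the rescaling already used in the proof of Proposition~\ref{pr:grad}, I would set
\[
\tilde w_\beta(X):=\frac{u(\beta X)}{u_0},\qquad X\in\mathbb{R}^N_+ ,
\]
so that $\tilde w_\beta>0$ solves $-\Delta_p\tilde w_\beta=g(\tilde w_\beta)$ in $\mathbb{R}^N_+$ with $\tilde w_\beta=0$ on $\partial\mathbb{R}^N_+$, where $g(s):=\beta^p u_0^{1-p} f(u_0 s)$. A direct check from the pointwise bounds in $(f_1)$ shows that $g$ again satisfies $(f_1)$, now with constants
\[
\tilde a=a\,\beta^p u_0^{\,q+1-p},\qquad \tilde A=A\,\beta^p u_0^{\,q+1-p}
\]
(and, in the range $p-1<q<2$, an incremental constant of the same order $\tilde A$); since $q+1-p>0$, $0<\beta<\beta_0$ and $0<u_0\le\|u\|_\infty$, the constant $\tilde A$ is bounded uniformly, while $\tilde a^{-2}=a^{-2}\beta^{-2p}u_0^{\,2p-2q-2}$ is exactly the quantity we expect to appear. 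Under $X=x/\beta$ the set $\mathcal{K}(x'_0)=B_{\beta\sqrt N}(x'_0)\times(\beta,2\beta)$ becomes the \emph{fixed--size} set $\Omega':=B_{\sqrt N}(x'_0/\beta)\times(1,2)$, lying at distance $1$ from $\partial\mathbb{R}^N_+$, with $\tilde w_\beta(x'_0/\beta,\tfrac32)=1$.

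Next I would record the $\beta$--uniform bounds on $\tilde w_\beta$ over a fixed neighbourhood $\Omega'_\delta$ of $\Omega'$ (say $\delta=\tfrac14$). Harnack's inequality, with constant $C_H$ independent of $\beta$ because the right--hand side of the equation for $\tilde w_\beta$ is uniformly bounded on $\Omega'_\delta$, together with $\tilde w_\beta(x'_0/\beta,\tfrac32)=1$, gives $\sup_{\Omega'_\delta}\tilde w_\beta\le C_H$ and $\inf_{\Omega'_\delta}\tilde w_\beta\ge C_H^{-1}$; the interior gradient estimates of \cite{Di} (applied exactly as in the proof of Proposition~\ref{pr:grad}) give $\|\nabla\tilde w_\beta\|_{L^\infty(\Omega'_\delta)}\le C$, all constants depending only on $\beta_0$, $N$, $p$, $f$ and $u$. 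Since now the geometric constant $\overline M$, the covering number $S(\delta)$ (see Remark~\ref{countingSdelta}) and $\delta$ are fixed, Corollary~\ref{cor:SobConstant} applied to $\tilde w_\beta$ on $\Omega'\subset\subset\mathbb{R}^N_+$ (the estimate \eqref{drdrdbisssete} being an interior one, only quantities on $\Omega'_\delta$ enter) gives
\[
\int_{\Omega'}\frac{1}{|\nabla\tilde w_\beta|^{\tau}}\frac{1}{|X-Y|^\gamma}\,dX\ \le\ \overline{\mathcal{C}}^*\ \le\ C\,\frac{S(\delta)}{\delta^{2}}\,\frac{1}{\tilde a^{2}\,\big(\inf_{\Omega'_\delta}\tilde w_\beta\big)^{2q}}\ \le\ \frac{C}{a^{2}}\,\beta^{-2p}\,u_0^{\,2p-2q-2},
\]
uniformly in $\beta\in(0,\beta_0)$ and $x'_0\in\mathbb{R}^{N-1}$ (in the last step one uses $\inf_{\Omega'_\delta}\tilde w_\beta\ge C_H^{-1}$ and that $S(\delta),\delta$ are fixed, and translation invariance disposes of the centre $x'_0/\beta$). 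Undoing the rescaling ($x=\beta X$, $dx=\beta^N\,dX$, $|x-y|=\beta|X-Y|$, $|\nabla u(x)|=\tfrac{u_0}{\beta}|\nabla\tilde w_\beta(x/\beta)|$) then yields
\[
\int_{\mathcal{K}(x'_0)}\frac{1}{|\nabla u|^{\tau}}\frac{1}{|x-y|^\gamma}\,dx=\beta^{\,N+\tau-\gamma}\,u_0^{-\tau}\int_{\Omega'}\frac{1}{|\nabla\tilde w_\beta|^{\tau}}\frac{1}{|X-Y|^\gamma}\,dX\ \le\ \frac{C}{a^{2}}\,\beta^{\,N+\tau-2p-\gamma}\,u_0^{\,2p-2q-2-\tau},
\]
which is \eqref{drdrdbisssetetritthuhueses} with $C=C(\beta_0)$ after absorbing $a=a(\|u\|_\infty)$.

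I expect the main obstacle to be the \emph{uniform} control of $\overline{\mathcal{C}}^*$: one must verify that, once the rescaling has fixed $\Omega'$, every $\beta$--dependent ingredient of Proposition~\ref{pro:SobConstant} — the boundary distance $\delta$, the covering number $S(\delta)$, the geometric constant $\overline M$, the sup of $|\nabla\tilde w_\beta|$, the sup of the gradient of the right--hand side, and the infimum of $\tilde w_\beta$ — is either $\beta$--independent or gets absorbed, so that the only surviving $\beta$-- and $u_0$--dependence is the explicit factor $\tilde a^{-2}=a^{-2}\beta^{-2p}u_0^{2p-2q-2}$. This is precisely the point of rescaling to a $\beta$--independent $\Omega'$: it neutralises the obstruction noted in the Scheme of the proofs, namely that the constant in Proposition~\ref{pro:SobConstant} degenerates with the distance from the boundary (which would be fatal for small $\beta$, i.e. at the start of the moving plane procedure); and it is why the $\beta$--uniform Harnack and interior $C^1$ bounds for $\tilde w_\beta$ are indispensable. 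The only routine side check is that $g(s)=\beta^p u_0^{1-p} f(u_0 s)$ genuinely satisfies $(f_1)$ with the constants displayed above, which follows immediately from the pointwise inequalities in $(f_1)$.
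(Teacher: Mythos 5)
Your proposal is correct and is essentially the paper's own argument: the paper likewise rescales to a fixed-size box at unit distance from the boundary and applies the quantitative estimate of Proposition \ref{pro:SobConstant} there with $\beta$-independent geometric constants before scaling back, the only difference being that it normalizes by $\beta$ (setting $w_\beta=u(\beta x'+x_0',\beta y)/\beta$) and feeds the bounds $\overline{K}\lesssim u_0/\beta$, $\overline{\mu}\gtrsim \beta\,a\,u_0^{q}$, $\overline{W}\lesssim \beta\, A\,u_0^{q}$ (obtained via Proposition \ref{pr:grad} and Harnack) into the explicit constant, whereas you normalize by $u_0$ and push the same powers of $\beta$ and $u_0$ into the rescaled nonlinearity constant $\tilde a=a\,\beta^p u_0^{\,q+1-p}$ before invoking Corollary \ref{cor:SobConstant}; both bookkeepings yield the identical exponents. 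One small repair: your justification of the uniform Harnack constant is circular as written, since $g(\tilde w_\beta)=\beta^p u_0^{1-p}f(u)$ is not a priori uniformly bounded when $u_0$ is small; instead, as in the proof of Proposition \ref{pr:grad}, write the equation as $-\Delta_p\tilde w_\beta=d(X)\,\tilde w_\beta^{\,p-1}$ with $d(X)=\beta^p f(u)/u^{p-1}\leq A\,\beta^p u^{\,q+1-p}\leq C(\beta_0)$ (by the mean value theorem and $q>p-1$), and then apply the Harnack inequality of \cite[Theorem 7.2.2]{PSB} in that form.
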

\begin{remark}\label{ghhghgjfjfjbisrem}
Let us point out for future use that, the value $u_0= u(x'_0,\frac{3}{2}\beta)$ may be substituted by the value of $u$ at some other point of
$\mathcal{K}(x'_0)$. Because of the Harnack inequality, this only causes a changing of the constant $C$ in \eqref{drdrdbisssetetritthuhueses}.
\end{remark}
\

\begin{proof}
   Let
\begin{equation}\label{ghhghgjfjfjbis}
w_\beta(x',y)=\frac{ u( (\beta\,x'+x'_0),\beta y)}{\beta} \,,
\end{equation}
\noindent and consider, as in Proposition \ref{pr:grad},   $\mathcal{K}_T(x'_0)$ to be the set corresponding to $\mathcal{K}(x'_0)$ via the map:
$
(x'\,,\,y)\longrightarrow T(x',y)\,:=\, (\beta\,x'+x'_0,\beta\,y)$. Note that $w_\beta$ is bounded in $\mathcal{K}_T^\mu(x'_0)$ (the neighborhood of radius $\mu>0$ of $\mathcal{K}_T(x'_0)$) by the mean value theorem, and
 it follows by ($f_1$) that
\begin{equation}\label{eq:grad:u1bis}
-\Delta_p\,w_\beta=\beta f(u( (\beta\,x'+x'_0),\beta y)) \quad \text{in}\,\,\, \mathcal{K}_T^\mu(x'_0)\,.
\end{equation}
We  consider  in particular $\mu=\frac{1}{4}$, so that Proposition \ref{pr:grad} applies (see \eqref{drdrdbisssetetritt}).

Also, setting as above $u_0= u(x'_0,\frac{3}{2}\beta)$,  we can  exploit Harnack inequality,  and get
$$u _0\leq \sup_{T(\mathcal{K}_T^\mu(x'_0)) } u \leq C_H \inf_{T(\mathcal{K}_T^\mu(x'_0))}u \leq C_H \,u_0$$

Then, for $\tau <(p-1)$ (actually we will let $\tau\approx (p-1)$) and
$\gamma <N-2$ (actually we will let $\gamma \approx N-2$),
considering $y'\in \mathcal{K}_T(x'_0)$, by exploiting Proposition \ref{pro:SobConstant} for \eqref{eq:grad:u1bis} (fixing e.g. $\delta=1/8$),  we get
\begin{equation}\label{drdrdbisssetetritt}
\begin{split}
&\int_{\mathcal{K}_T(x'_0)} \frac{1}{|\nabla w_\beta|^{\tau}}\frac{1}{|x-y'|^\gamma}\\
 &\leq\frac{C\,\, }{\beta^2\,a^2\,u_0^{2q}\, }\bigg[\big(\frac{u_ 0}{\beta}\big)^{2p-2-\tau}\,+\, \big(\frac{u_0}{\beta}\big)^{p-\tau}\beta^2\,A\,u_0^{q-1}\bigg]+
\frac{C\,\, }{\beta\,a\,u_0^{q}\, }\big(\frac{u_0}{\beta}\big)^{p-1-\tau}\leq\\
&\leq C\,\frac{u_0^{2p-2-\tau} }{\beta^2\beta ^{2p-2-\tau}u_0^{2q}\, }\,,
\end{split}
\end{equation}
where $C=C(\tau,p,\gamma,f)$.\\
\noindent We have used here Proposition \ref{pro:SobConstant} via Proposition \ref{pr:grad}, and we exploited the fact that
 the Harnack constant is uniformly bounded because of the geometry of the domain, see~\cite[Theorem 7.2.2]{PSB}.\\

Scaling,  it is now easy to see that, for $y\in T^{-1}(\mathcal{K}_T(x'_0))$
\begin{equation}\label{drdrdbisssetetritthuhu}
\begin{split}
\int_{\mathcal{K}(x'_0)} \frac{1}{|\nabla u|^{\tau}}\frac{1}{|x-y|^\gamma}&=\beta^{(N-\gamma)}\int_{\mathcal{K}_T(x'_0)} \frac{1}{|\nabla w_\beta|^{\tau}}\frac{1}{|x-\frac{y}{\beta}|^\gamma}\\
&\,\leqslant C\,\beta^{(N-2-\gamma)}\frac{u_0^{2p-2-\tau} }{\beta ^{2p-2-\tau}u_0^{2q}\, }\,,
\end{split}
\end{equation}
where we used \eqref{drdrdbisssetetritt} with  $y'=\frac{y}{\beta}\in (\mathcal{K}_T(x'_0))$.

\end{proof}

The above results will be crucial when dealing with the proof of our main results in the sub-linear case. We now prove
two lemma, that will be mainly used in the super-linear case. We will use in particular some translation arguments which,
in the semilinear case, go back to  \cite{BCN2,Dan1}.

\begin{lemma}\label{le:cuccurucucu}
Let $u\in C^{1,\alpha}(\mathbb{R}^N_+)$ be a  solution to \eqref{E:P} and assume that  ($H_2$) holds, namely
 \[
 f(s)\geq c_f s^{p-1}\qquad \text{in}\quad [0\,,\,s_0]\,,
 \]
 for some $s_0>0$ and some positive constant $c_f$ and $p>2$. Denote
$$ \Sigma_{(\beta,\infty)}=\left\{ (x',y): x'\in \mathbb{R}^{N-1}, y\in(\beta,+\infty)\right\}\,.$$Then there exits $\beta>0$ and $\delta(\beta)>0$ such that
$$ u(x)>\delta(\beta) \quad \forall x \in \Sigma_{(\beta, \infty)}. $$
\end{lemma}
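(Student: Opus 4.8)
\textbf{Proof plan for Lemma \ref{le:cuccurucucu}.}

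The plan is to argue by contradiction using translations, exploiting the fact that under the assumption $f(s)\geq c_f s^{p-1}$ on $[0,s_0]$ there is no bounded positive solution of $-\Delta_p w = c_f w^{p-1}$ on all of $\mathbb{R}^N$ (or on a half-space) that can be arbitrarily small — more precisely, the nonlinearity is ``too large near zero'' for a small positive solution to survive. First I would suppose, for contradiction, that the conclusion fails: then for every $\beta>0$ there is no uniform positive lower bound on $u$ in $\Sigma_{(\beta,\infty)}$, so we can pick a sequence of points $x_n = (x_n', y_n)$ with $y_n \to +\infty$ and $u(x_n) \to 0$. (The divergence $y_n\to\infty$ is forced: on any slab $\{ 0\le y\le \beta\}$ the strong maximum principle and Hopf's lemma, Theorem \ref{semihop}, together with a local Harnack-type argument near the boundary, prevent $u$ from approaching zero at interior points, and gradient bounds control behavior near $\partial\mathbb{R}^N_+$; this part needs a short separate argument but is standard.)

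Next I would translate: set $u_n(x',y) := u(x' + x_n', y + y_n - 1)$ or, better, center at $x_n$ by $u_n(z) := u(z + x_n)$. Since $|\nabla u|\in L^\infty$ and $u\in C^{1,\alpha}_{loc}$ with the equation $-\Delta_p u = f(u)$, the $u_n$ are uniformly bounded in $C^{1,\alpha}_{loc}$ of $\mathbb{R}^N$ (using also that $u$ stays bounded: $u(x_n)\to 0$ and the Harnack inequality, Theorem \ref{semihop}-type results, bound $u$ uniformly on unit balls around $x_n$). By Ascoli--Arzel\`a and a diagonal argument, up to a subsequence $u_n \to w$ in $C^1_{loc}(\mathbb{R}^N)$, where $w\geq 0$ solves $-\Delta_p w = f(w)$ in $\mathbb{R}^N$ (the half-space boundary has escaped to infinity because $y_n\to\infty$), and $w(0) = \lim u_n(0) = \lim u(x_n) = 0$. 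By the strong maximum principle (Theorem \ref{semihop}), $w\equiv 0$. But then $u_n \to 0$ in $C^1_{loc}$, so for $n$ large $u_n < s_0$ on, say, $B_R(0)$ for any fixed $R$; on that ball $u_n$ satisfies $-\Delta_p u_n = f(u_n) \geq c_f u_n^{p-1}$, i.e.\ $u_n$ is a positive subsolution-type object for the sublinear-in-the-$p$-Laplacian sense. The standard way to derive a contradiction is to compare $u_n$ on a large ball $B_R$ with a suitable small positive solution (or to use that the first eigenvalue of $-\Delta_p$ on $B_R$ with the weight $c_f$ tends to $0$ as $R\to\infty$, so for $R$ large enough $-\Delta_p z = c_f z^{p-1}$ has a positive solution $z$ in $B_R$ vanishing on $\partial B_R$; then since $u_n>0 = z$ on $\partial B_R$ and $u_n$ is not identically zero, a comparison/sliding argument forces $u_n \geq \epsilon z$ for some fixed $\epsilon>0$ independent of $n$, contradicting $u_n\to 0$ at the center). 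Equivalently, test the equation for $u_n$ with an eigenfunction of $-\Delta_p$ on $B_R$ to get $\lambda_1(B_R)\int \phi^{p-1}u_n \geq$ (something forcing $\lambda_1(B_R)\geq c_f$), which is false for $R$ large.

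The main obstacle is making the final comparison step rigorous for the $p$-Laplacian with $p>2$: one cannot simply linearize, and the ``shrinking first eigenvalue'' argument must be phrased via the nonlinear eigenvalue problem $-\Delta_p \phi_R = \lambda_1(B_R)\phi_R^{p-1}$ together with a weak comparison principle valid on the fixed bounded ball $B_R$ (where the weight $|\nabla u_n|^{p-2}$ causes the usual degeneracy issues, but on a bounded domain one may invoke the known comparison principles for the $p$-Laplacian with positive nonlinearity, cf.\ Theorem \ref{hthCOMPARISONII} and the references \cite{DS1,DS2}). A clean alternative, avoiding eigenvalues, is: since $f(s)\geq c_f s^{p-1}$ near $0$, pick $R$ so large that $c_f > \lambda_1(B_R)$ (where $\lambda_1(B_R)\sim R^{-p}\to 0$); let $\phi_R>0$ be the first eigenfunction, normalized so $\phi_R \le \min_{\overline{B_{R/2}}} w_0$ for some fixed positive barrier — but since $w\equiv 0$ this normalization degenerates, which is precisely why we instead run the argument on $u_n$ before passing to the limit. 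Concretely: for each fixed large $R$ with $\lambda_1(B_R(x_n))<c_f$, as long as $u < s_0$ on $B_R(x_n)$ we get $u>0$ on $B_R(x_n)$ and $-\Delta_p u \ge \lambda_1 u^{p-1}$; a standard argument (multiply by $u\phi_R^p/(u+\epsilon)^{\,\cdot}$-type test function, or use the Picone-type inequality for the $p$-Laplacian) yields $\int_{B_R} \phi_R^p \big(c_f - \lambda_1(B_R)\big) u^{p-1}/(\dots) \le 0$, impossible. Thus $u$ must exceed $s_0$ somewhere in every such ball, and combined with Harnack this yields a uniform lower bound $\delta(\beta)>0$ on $\Sigma_{(\beta,\infty)}$ for $\beta$ chosen appropriately. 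I would organize the write-up so that the contradiction is reached directly from the translated sequence $u_n$, keeping all comparison arguments on the fixed bounded ball $B_R$ where classical $p$-Laplacian tools apply.
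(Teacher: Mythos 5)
Your route is essentially sound, but it is genuinely different from the paper's. The paper gives a direct (non-compactness) argument: it takes the first Dirichlet eigenfunction $\phi_1^R$ of $-\Delta_p$ on a ball $B_R$ with $R$ so large that $\lambda_1(R)\leq c_f$, scales it below $u$ in one ball whose center has height $>R$, and then \emph{slides} the ball horizontally and vertically inside $\{y>R\}$: if the translated, scaled eigenfunction ever touched $u$ from below, the strong comparison principle (Theorem \ref{hthCOMPARISONII}) would force it to coincide with $u$, impossible since it is compactly supported. This yields the explicit bound $\delta(\beta)=\max\tilde\phi^R_{1}$ with $\beta=R$, and it uses no global information on $u$ beyond positivity. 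Your argument instead negates the conclusion, translates along points $x_n$ with $y_n\to\infty$, $u(x_n)\to 0$, extracts a $C^1_{loc}$ limit $w\equiv 0$ (via the strong maximum principle, Theorem \ref{semihop}), and then contradicts $\lambda_1(B_R)<c_f$ on a fixed large ball where $0<u_n\leq s_0$, via the Picone/Allegretto--Piepenbrink inequality ($\int|\nabla\phi|^p\geq c_f\int\phi^p$ for all $\phi\in C^\infty_c(B_R)$, hence $\lambda_1(B_R)\geq c_f$). That final step is clean and, in my view, tidier than the touching-point/strong-comparison analysis; both proofs ultimately rest on the same fact, $\lambda_1(B_R)\to 0$ as $R\to\infty$.

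One caveat you should address explicitly: the lemma as stated assumes only $u\in C^{1,\alpha}(\mathbb{R}^N_+)$, whereas your compactness step invokes $|\nabla u|\in L^\infty$ (and through it uniform local bounds on $u$ and on $f(u)$ near the points $x_n$, needed for the uniform $C^{1,\alpha}_{loc}$ estimates and the Ascoli--Arzel\`a/diagonal extraction). Without some such hypothesis the translated sequence need not be locally equibounded, and the limit argument does not get off the ground; the paper's sliding proof avoids this entirely. Since the lemma is applied in the paper to solutions with $|\nabla u|\in L^\infty(\mathbb{R}^N_+)$ (Theorem \ref{mainthmfdfdfdfdf}), your proof suffices for the intended applications, but you should either add this hypothesis to your statement or replace the compactness step. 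Two smaller points: the parenthetical argument that $y_n\to\infty$ ``is forced'' is unnecessary (and shaky as phrased) — simply choose $\beta=n$, $\delta=1/n$ in the negation to get $y_n>n$ directly; and, like the paper, you implicitly assume $u\not\equiv 0$ so that $u>0$ in the open half-space by the strong maximum principle, which is what legitimizes the Picone test function $\phi^p/u_n^{p-1}$ on $B_R$.
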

\begin{proof}
Let $\phi^R_1\in C^{1,\alpha}(\overline{B_R(0)})$ be the first positive eigenfunction of the $p$-Laplacian in $B_R(0)$, namely a positive solution of
$$
\begin{cases}\label{eq:1_autovalore}
-\Delta_p \phi =\lambda_1(R)\phi^{p-1}, & \text{ in }B_R(0)\\
\qquad \phi=0,&  \text{ on }\partial B_R(0)\,.
\end{cases}
$$
It is well known that  $\lambda_1(R)\rightarrow 0$ if $R\rightarrow \infty.$ Let us set
$$ \phi^R_{1,x_0}(x)=\phi_1^R(x-x_0)\quad \text{in } B_R(x_0). $$ For  $R$ fixed sufficiently large we have
\begin{eqnarray}\label{eq:cucu1}
-\Delta_p \phi^R_{1,x_0}&=&\lambda_1(R)\left(\phi^R_{1,x_0}\right)^{p-1}\\\nonumber
&\leq&c_f\left(\phi^R_{1,x_0}\right)^{p-1} \quad \text{in } B_R(x_0),
\end{eqnarray}
where $c_f$ is as in the statement. Also by assumption
\begin{equation}\label{eq:cucu2}
-\Delta_p u\geq c_f u^{p-1} \quad \text{in } \mathbb{R}^N_+ \cap \{0\leq u(x) \leq s_0\}.
 \end{equation}
 Moreover we can redefine the first eigenfunction by scaling so that:
 $$\tilde{\phi}^R_{1,x_0}=s \phi^R_{1,x_0}< u \quad \text{in } B_R(x_0).$$

  \noindent Setting $$x_0(i,t)=(x_0'(i,t),y_0(i,t))=x_0+te_i\,,$$ under the condition  $y_0(i,t)>R$,   we can exploit a sliding technique by considering
  \begin{center}
  $\tilde{\phi}^R_{1,i,t}=\tilde{\phi}^R_{1}(x-x_0(i,t)) $,
  \end{center}
  where  $e_i \in S^{N-1}$. By \eqref{eq:cucu1} and \eqref{eq:cucu2}, exploiting  the Strong Comparison Principle (see Theorem \eqref{hthCOMPARISONII}), we get $\tilde{\phi}^R_{1,i,t}< u$
  for every $i,t$, such that $y_0(i,t)>R$.
  In fact, if this is not the case, we would have  $\tilde{\phi}^R_{1,i,t}$ touching from below $u$ at some point, namely
  it would exist some point $\hat x\in \mathbb{R}^N_+$ where $\tilde{\phi}^R_{1,i,t}(\hat x)= u(\hat x)$ for some $i,t$, and $\tilde{\phi}^R_{1,i,t}(\hat x)\leq u(\hat x)$. This (by the Strong Comparison Principle ) would imply $\tilde{\phi}^R_{1,i,t} \equiv u$, that is a contradiction since  $\tilde{\phi}^R_{1,i,t}$ is compactly supported. Thus we have the thesis with $\beta=R$ and $\delta (\beta)= \max \tilde{\phi}^R_{1,x_0}$ in $B_R(x_0)$.
\end{proof}

\begin{lemma}\label{vdcvs0987654}
Let $u\in C^{1,\alpha}(\overline{\mathbb{R}^N})$ be a  solution to \eqref{E:P} such that  $|\nabla u|$ is bounded, with $f$ positive and locally Lipschitz continuous. Assume that
\begin{equation}\label{gfgfgsdhjsjbvbbvbhcnncndnnd}
u\geq \underline{u}_\beta>0\qquad \text{in}\quad \{y\geq\beta\}\,,
\end{equation}
for some $\beta>0$ and some positive constant $\underline{u}_\beta\in \mathbb{R}^+$. Then it follows that
\begin{equation}
\frac{\partial u}{\partial y}\geq \underline{u'}_\theta>0\qquad \text{in}\quad \Sigma_{(0,\theta)}\,,
\end{equation}
for some $\theta >0$ and some positive constant $\underline{u'}_\theta\in \mathbb{R}^+$, with $ \Sigma_{(0,\theta)}=\left\{ (x',y): x'\in \mathbb{R}^{N-1}, y\in[0,\theta]\right\}$.
\end{lemma}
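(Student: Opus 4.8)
The plan is to produce, uniformly in the boundary point $(x_0',0)$, a Hopf-type lower bound $\partial_y u(x_0',0)\geq c>0$ by comparison with a single fixed $p$-harmonic barrier placed in a cylinder of height $\beta$ whose top face sits inside $\{y\geq\beta\}$, and then to propagate that bound into a full strip by means of a uniform gradient estimate up to $\partial\mathbb{R}^N_+$. Concretely, I would first fix once and for all the reference cylinder $\mathcal C:=B\times(0,1)$, with $B$ the unit ball of $\mathbb{R}^{N-1}$, and let $h\in W^{1,p}(\mathcal C)$ be the $p$-harmonic function with boundary value $1$ on the top face $B\times\{1\}$ and $0$ on the rest of $\partial\mathcal C$. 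Then $0\le h\le 1$, $h>0$ in $\mathcal C$ by the strong maximum principle, and — since the bottom face is flat and the datum is smooth near it — $h$ is $C^{1,\alpha}$ up to a neighbourhood of the origin in $\overline{\mathcal C}$, so that by Hopf's Lemma (Theorem \ref{semihop}) the number $2c_0:=\partial_y h(0,0)>0$ depends only on $N$ and $p$. For each $x_0'\in\mathbb{R}^{N-1}$ I then set $C_{x_0'}:=B_\beta(x_0')\times(0,\beta)$ and take the barrier $\underline{w}_{x_0'}(x):=\underline{u}_\beta\, h\!\left((x-(x_0',0))/\beta\right)$ on $C_{x_0'}$; being a rescaled translate of $h$, it satisfies $-\Delta_p\underline{w}_{x_0'}=0$ in $C_{x_0'}$ and $\partial_y\underline{w}_{x_0'}(x_0',0)=2c_0\,\underline{u}_\beta/\beta$, a quantity \emph{independent of $x_0'$} — which is the whole reason for using one fixed profile.

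Next I would compare $u$ and $\underline{w}_{x_0'}$ on $C_{x_0'}$: on the lateral and bottom parts of $\partial C_{x_0'}$ one has $\underline{w}_{x_0'}=0\le u$, while on the top face $B_\beta(x_0')\times\{\beta\}\subset\{y\ge\beta\}$ one has $\underline{w}_{x_0'}=\underline{u}_\beta\le u$ by \eqref{gfgfgsdhjsjbvbbvbhcnncndnnd}. Since $f\ge0$ on $[0,\infty)$, we have $-\Delta_p u=f(u)\ge 0=-\Delta_p\underline{w}_{x_0'}$ in $C_{x_0'}$, so the standard weak comparison principle for the $p$-Laplacian — test the difference of the two equations with $(\underline{w}_{x_0'}-u)^+\in W_0^{1,p}(C_{x_0'})$ and use the strict monotonicity of $\xi\mapsto|\xi|^{p-2}\xi$ — gives $u\ge\underline{w}_{x_0'}$ in $C_{x_0'}$. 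As $u$ and $\underline{w}_{x_0'}$ are $C^1$ up to the flat boundary near $(x_0',0)$ and both vanish there, comparing inward normal derivatives yields
\[
\partial_y u(x_0',0)\ \ge\ \partial_y\underline{w}_{x_0'}(x_0',0)\ =\ 2c_0\,\frac{\underline{u}_\beta}{\beta}\ =:\ 2\,\underline{u'}_*\ >\ 0 ,
\]
a bound uniform in $x_0'$.

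Finally I would turn this pointwise bound on $\{y=0\}$ into a bound on a strip. Since $u,|\nabla u|\in L^\infty(\mathbb{R}^N_+)$ and $f$ is locally Lipschitz, $f(u)\in L^\infty(\mathbb{R}^N_+)$, and the classical $C^{1,\alpha}$ estimate up to the boundary for $-\Delta_p u=f(u)$ (see \cite{Li,Di}) provides $\alpha\in(0,1)$ and $C_0>0$, independent of $x_0'$ by invariance under translations in $x'$, such that $|\partial_y u(x',y)-\partial_y u(x',0)|\le C_0\,y^\alpha$ for $y\in[0,1]$. Choosing $\theta:=\min\{1,(\underline{u'}_*/C_0)^{1/\alpha}\}$ we obtain, for every $x'\in\mathbb{R}^{N-1}$ and $y\in[0,\theta]$, $\partial_y u(x',y)\ge 2\underline{u'}_*-C_0\theta^\alpha\ge\underline{u'}_*=:\underline{u'}_\theta>0$, which is the assertion. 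The only genuinely delicate point is the \emph{uniformity in $x_0'$}: it is what forces the barrier to be a single translated–rescaled profile (a multiple of the first eigenfunction on a large ball, say, would have a Hopf constant degenerating in the relevant limit) and what makes the uniform up-to-the-boundary gradient Hölder estimate — rather than mere continuity of $\nabla u$ at each point — indispensable; a workable alternative for this last step is a translation–compactness argument, extracting a $C^1_{loc}$-limit along a hypothetical sequence on which $\partial_y u\to0$ near $\{y=0\}$ and contradicting the barrier bound.
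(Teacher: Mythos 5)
Your argument is correct in substance, but it follows a genuinely different route from the paper. The paper proves the lemma by contradiction and compactness: if $\partial_y u(x'_n,y_n)\to 0$ with $y_n\to 0$, it translates the solution, $u_n(x',y):=u(x'+x'_n,y)$, extracts via Ascoli a $C^{1,\alpha'}_{loc}$ limit $\tilde u$ solving the same problem in $\mathbb{R}^N_+$, observes that hypothesis \eqref{gfgfgsdhjsjbvbbvbhcnncndnnd} (which is preserved under the translations) rules out $\tilde u\equiv 0$, so that $\tilde u>0$ by the strong maximum principle, and then contradicts $\partial_y\tilde u(0,0)=0$ with the Hopf boundary lemma (Theorem \ref{semihop}). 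You instead give a constructive, quantitative proof: a single rescaled and translated $p$-harmonic barrier in $B_\beta(x'_0)\times(0,\beta)$, compared with $u$ through the weak comparison principle (legitimate, since $f(u)\ge 0$ on the range of $u$ and the barrier is $p$-harmonic), yields the uniform boundary bound $\partial_y u(x'_0,0)\ge 2c_0\,\underline{u}_\beta/\beta$, and the translation-invariant $C^{1,\alpha}$ estimate up to $\{y=0\}$ propagates it to a strip. Your method buys explicit constants ($\underline{u'}_\theta$ and $\theta$ in terms of $\underline{u}_\beta$, $\beta$, $\|\nabla u\|_\infty$, $N$, $p$), at the price of the barrier construction and the boundary-regularity bookkeeping; the paper's argument is softer and shorter but non-quantitative. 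Your closing remark that a translation--compactness argument could replace the last step is, in essence, the paper's entire proof.

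Two small points should be repaired, neither of which affects the core idea. First, for $p>2$ the discontinuous datum ($1$ on the top face, $0$ on the rest of $\partial\mathcal C$) is not the trace of a $W^{1,p}(\mathcal C)$ function (a jump across the edge $\partial B\times\{1\}$ lies in $W^{1-1/p,p}$ of the boundary only when $p<2$), so either take the Perron solution or, more simply, prescribe a continuous datum vanishing on the bottom and lateral faces and equal to a cutoff bounded by $1$ on the top face: all you actually use is $0\le h\le 1$, $h=0$ on the bottom and lateral faces, $h\not\equiv 0$, and $C^1$ regularity plus Hopf at the center of the bottom face, where the boundary is flat and the datum vanishes. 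Second, you invoke $u\in L^\infty(\mathbb{R}^N_+)$, which is not among the hypotheses; only boundedness of $u$ (hence of $f(u)$) in a strip such as $\{0\le y\le 2\}$ is needed for the uniform up-to-the-boundary estimate, and this follows from $u(x',0)=0$ together with $|\nabla u|\in L^\infty(\mathbb{R}^N_+)$.
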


\begin{proof}
We argue by contradiction. Were the claim false, we could find a sequence of points $x_n=(x'_{n},y_{n})$ such that,
for $n$ tending to infinity, we have
 \[
 \frac{\partial u}{\partial y}(x'_n,y_n)\rightarrow 0\quad \text{and}\quad y_n\rightarrow 0\,.
 \]
Let us now define
$$u_n(x',y)=u(x'+x'_n,y)$$
so that $\|u_n\|_\infty = \|u\|_\infty\leqslant C$.  Arguing as in \cite{FMS} and exploiting  Ascoli's theorem,
it follows  that, up to subsequences, we have

\begin{equation}\label{roccaseccatrubis}
u_n\overset{C^{1,\alpha'}_{loc}(\mathbb{R}^N_+)}{\longrightarrow}\tu
\end{equation}
up to subsequences, for for some $\alpha '>0$.
 We consider $\tu$  in the entire space $\mathbb{R}^N_+$ constructed by a standard diagonal process.\\

It is now standard to see that $ -\Delta_p  \tu=f(\tu)$ in $\mathbb{R}^N_+$ and it follows by construction that $\tu\geqslant 0 $ in $\mathbb{R}^N_+$ and consequently  $\tu> 0 $ in $\mathbb{R}^N_+$  by the Strong Maximum Principle (see \cite{PSB,V}),
since the case $\tu\equiv 0 $ is avoided by \eqref{gfgfgsdhjsjbvbbvbhcnncndnnd}. By construction (since $\frac{\partial u}{\partial y}(x'_n,y_n)\rightarrow 0$) it also follows that
\[
\frac{\partial \tu}{\partial y}(0,0)=0\,.
\]
Since $\tu$ is positive in the interior of $\mathbb{R}^N_+$,
the contradiction follows by the Hopf boundary lemma \cite{V},
and the thesis is proved.

\end{proof}

\begin{remark}\label{vdcvs0987654b}
With the same notation of Lemma \ref{vdcvs0987654}, it is now easy to see that actually we may assume that:
\begin{equation}\label{gfgfgsdhjsjbvbbvbhcnncndnndtrtrt}
u\geq \underline{u}_\beta>0\qquad \text{in}\quad \{y\geq\beta\}\,,
\end{equation}
for some $\beta>0$ and some positive constant $\underline{u}_\beta\in \mathbb{R}^+$ and
\begin{equation}
\frac{\partial u}{\partial y}\geq \underline{u'}_\beta>0\qquad \text{in}\quad \Sigma_{(0,2\beta)},
\end{equation}
for some positive constant $\underline{u'}_\beta\in \mathbb{R}^+$.

To prove this it is sufficient to use the monotonicity of the solution near the boundary, given by Lemma \ref{vdcvs0987654}, and
the Harnack inequality (see~\cite[Theorem 7.2.2]{PSB}) far from the boundary, like in Proposition \ref{pr:gradbis}.
\end{remark}

%\begin{corollary}\label{cor:SobConstant}
%In the same hypotheses of Proposition \ref{pro:SobConstant}, assume that
%\begin{itemize}
%\item [$(i)$] $a\,u^q\leq f(u)\leq A\,u^q$ for some positive constants $a,A>0$.
%\item [$(ii)$]$|\nabla u|\leq C |u|$, in $\Omega'_\delta.$
%\end{itemize} Then
%\begin{equation}\label{eq:SobConstant}
%\overline{\mathcal{C}}^*\leq \frac{CS}{p-\tau-1}\frac{1}{{\displaystyle{\left(\inf_{\Omega'_\delta} u\right)^{2q}}}}\frac{1}{\delta^2}\left( \sup_{\Omega'_\delta}u\right)^{2p-2-\tau}+\frac{CS}{{\displaystyle{\left(\inf_{\Omega'_\delta} u\right)^q}}}\frac 1\delta \left( \sup_{\Omega'_\delta}u\right)^{p-\tau-1}.
%\end{equation}
%\end{corollary}
%\begin{proof}
%It is sufficient to apply Proposition \ref{pro:SobConstant} considering  hypotheses $(i)$ and $(ii)$ and  the fact that for $p>1$ one has $2p-2-\tau>p-\tau-1$. Note that here the constants depend also on $a,A$ (see( $f_1$) and (i) here above).
%\end{proof}
\section{A weighted Sobolev-type inequality}\label{section4}
\begin{theorem}\label{thm: Sobolev}
Consider two sets $\Omega_1$ and $\Omega_2$ such that $\Omega_1\subset\Omega$, $\Omega_2 \subset\Omega$, $\Omega_1 \cap\Omega_2= \emptyset$ and $\overline{\Omega_1 \cup\Omega_2}= \overline{\Omega}$.

Let $\rho$ and $\eta$ two weight functions such that
\begin{eqnarray}\label{eq:weight}
\int_{\Omega_1} \frac{1}{\rho^t|x-y|^{\gamma}}\leq C^*_1, \\\nonumber
\int_{\Omega_2} \frac{1}{\eta^t|x-y|^{\gamma}}\leq C^*_2,
\end{eqnarray}
with $t=\frac{p-1}{p-2}r$, $\frac{p-2}{p-1}<r<1$, $\gamma< N-2$ $(\gamma =0 \,\,\text{if}\,\, N=2)$. Assume, in the case $N\geq 3$, without no lose of generality that
$$\gamma>N-2t,$$
which\footnote{Note that the condition $\gamma > N\,-\,2t$ holds true for $r\approx 1$ and $\gamma \approx N-2$ that we may assume with no loose of generality. } implies $Nt-2N+2t+\gamma>0$. Then,  for any $w\in H^{1,2}_{0,\rho}(\Omega_1)\cap H^{1,2}_{0,\eta}(\Omega_2)$,
 there exist  constants $C_{s_\rho}$ and $C_{s_\eta}$ such that
\begin{eqnarray}\label{FTFTnddjncj}
||w||_{L^q(\Omega)}&\leq& C_{s_\rho}||\nabla w||_{L^2(\Omega_1,\rho)}+C_{s_\eta}||\nabla w||_{L^2(\Omega_2,\eta)} \\\nonumber
&=&C_{s_\rho}\left( \int_{\Omega_1}\rho|\nabla w|^2\right)^{\frac 12}+C_{s_\eta}\left( \int_{\Omega_2}\eta|\nabla w|^2\right)^{\frac 12},
\end{eqnarray}
for any $1\leq q< 2^*(t)$
where
\begin{equation}\label{eq:2*}
\frac{1}{2^*(t)}=\frac 12 - \frac 1N + \frac 1t \left( \frac 12 - \frac{\gamma}{2N}\right).
\end{equation}
with
\begin{equation}\label{eq:Cs}
C_{s_\rho}=\hat C(C^*_1)^{\frac{1}{2t}}(C_M)^\frac{1}{(2t)'}\quad \text{and}\quad C_{s_\eta}=\hat C(C^*_2)^{\frac{1}{2t}}(C_M)^\frac{1}{(2t)'},
\end{equation}
where $\hat C$ is as in Condition (PE) \ref{rem:GT}, $C^*_1$ and  $C^*_2$ are as in the statement of theorem and $$C_M=\left(\frac{1-\delta}{\frac \alpha N-\delta}\right)^{1-\delta} \omega_n^{1-\frac{\alpha}{N}}|\Omega|^{\frac{\alpha}{N}-\delta}.$$
\end{theorem}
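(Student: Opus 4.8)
The plan is to reduce the split-domain inequality to the potential-estimate machinery of Condition (PE) together with Lemma \ref{le:potential}, following the strategy used for the one-weight weighted Sobolev inequality in \cite{DS1}. First I would fix $w\in H^{1,2}_{0,\rho}(\Omega_1)\cap H^{1,2}_{0,\eta}(\Omega_2)$; since $w$ vanishes on $\partial\Omega_1$ and on $\partial\Omega_2$ (in the appropriate weighted-trace sense) and $\overline{\Omega_1\cup\Omega_2}=\overline\Omega$, the extension of $w$ by zero lies in $W^{1,1}_0(\Omega)$ once one checks $\nabla w\in L^1(\Omega)$, which follows from the two weight conditions \eqref{eq:weight} via Hölder: on $\Omega_i$ one writes $|\nabla w|=\big(\rho^{1/2}|\nabla w|\big)\cdot\rho^{-1/2}$ and uses that $\rho^{-1}$ is locally integrable because $\int_{\Omega_i}\rho^{-t}|x-y|^{-\gamma}\le C^*_i$ with $t>1$. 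Hence Condition (PE) applies and gives the pointwise bound
\[
|w(x)|\leq \hat C\int_{\Omega}\frac{|\nabla w(y)|}{|x-y|^{N-1}}\,dy
=\hat C\int_{\Omega_1}\frac{|\nabla w(y)|}{|x-y|^{N-1}}\,dy+\hat C\int_{\Omega_2}\frac{|\nabla w(y)|}{|x-y|^{N-1}}\,dy.
\]

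Next, on each piece I would split the Riesz kernel against the weight: writing $\frac{1}{|x-y|^{N-1}}=\frac{1}{|x-y|^{\gamma/(2t)}}\cdot\frac{1}{|x-y|^{N-1-\gamma/(2t)}}$ and $|\nabla w|=\big(\rho^{1/2}|\nabla w|\big)\rho^{-1/2}$, apply Hölder in the $y$-variable with the three exponents $(2,2t,(2t)')$ — the triple Hölder inequality — to obtain, for $x\in\Omega$,
\[
\int_{\Omega_1}\frac{|\nabla w|}{|x-y|^{N-1}}\,dy
\leq\Big(\int_{\Omega_1}\rho|\nabla w|^2\Big)^{\frac12}
\Big(\int_{\Omega_1}\frac{1}{\rho^{t}|x-y|^{\gamma}}\,dy\Big)^{\frac{1}{2t}}
\Big(\int_{\Omega_1}\frac{dy}{|x-y|^{(N-1-\frac{\gamma}{2t})(2t)'}}\Big)^{\frac{1}{(2t)'}}.
\]
The middle factor is bounded by $(C^*_1)^{1/(2t)}$ by hypothesis, uniformly in $x$. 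The last factor is a Riesz potential $V_\mu[1,\Omega_1](x)$ with exponent $N(1-\mu)=(N-1-\frac{\gamma}{2t})(2t)'$; here the assumption $\gamma>N-2t$ (equivalently $Nt-2N+2t+\gamma>0$) is exactly what guarantees $\mu\in(0,1]$ so that $V_\mu$ is well defined and \eqref{eq:gilbard} applies. Taking $L^q$ norms in $x$ over $\Omega$ and invoking Lemma \ref{le:potential} (with the parameter $\delta=\frac1m-\frac1q$ chosen so that the exponent arithmetic matches \eqref{eq:2*}) converts the $L^\infty_x$-bounded middle factor and the $V_\mu$ factor into the constant $C_M$, yielding
\[
\Big\|\int_{\Omega_1}\tfrac{|\nabla w|}{|x-y|^{N-1}}\,dy\Big\|_{L^q(\Omega)}
\leq (C^*_1)^{\frac{1}{2t}}\,C_M^{\frac{1}{(2t)'}}\,\Big(\int_{\Omega_1}\rho|\nabla w|^2\Big)^{\frac12},
\]
and symmetrically for $\Omega_2$ with $\eta,C^*_2$. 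Summing the two contributions and absorbing $\hat C$ produces precisely \eqref{FTFTnddjncj} with $C_{s_\rho},C_{s_\eta}$ as in \eqref{eq:Cs}, and the range $1\le q<2^*(t)$ is dictated by the requirement $\delta<\mu$ in Lemma \ref{le:potential}, which unwinds to \eqref{eq:2*} after computing $\mu$ from $N(1-\mu)=(N-1-\frac{\gamma}{2t})(2t)'$.

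The delicate bookkeeping — and the step I expect to be the main obstacle — is the exponent arithmetic: one has to verify simultaneously that the triple-Hölder exponents are admissible, that the resulting Riesz exponent $\mu$ lies in $(0,1]$ (this is where $\gamma>N-2t$ is used, and why it can be assumed with no loss since we only need $r\approx1$, $\gamma\approx N-2$), and that the target integrability $q$ for which Lemma \ref{le:potential} gives a finite constant is exactly the $2^*(t)$ defined in \eqref{eq:2*}. A minor additional point requiring care is justifying the zero-extension of $w$ across the interface $\partial\Omega_1\cap\partial\Omega_2$: since $w$ lies in both weighted spaces with zero boundary data and the weights are locally bounded away from $0$ on compact subsets of $\Omega$ (a consequence of the local $C^{1,\alpha}$-nature of the solutions generating $\rho,\eta$ in the applications, or directly from \eqref{eq:weight}), the two zero-trace conditions are compatible and the glued function is a legitimate $W^{1,1}_0(\Omega)$ function, so Condition (PE) in form $(i)$ of Remark \ref{rem:GT} is applicable.
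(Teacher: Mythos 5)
Your overall architecture (Condition (PE), splitting the potential over $\Omega_1$ and $\Omega_2$, inserting the weights, invoking Lemma \ref{le:potential}, using $\gamma>N-2t$) is the paper's, but the central pointwise estimate is wrong. You apply a ``triple H\"older inequality'' with exponents $(2,2t,(2t)')$; since $\frac{1}{2t}+\frac{1}{(2t)'}=1$, the reciprocals sum to $\frac32\neq 1$, so this is not a legitimate H\"older application. In fact the inequality you write is false: take $\rho\equiv 1$, $|\nabla w|\equiv 1$ and $\Omega_1=B_\epsilon(x)$; the left-hand side is of order $\epsilon$, while the three factors on the right scale as $\epsilon^{N/2}$, $\epsilon^{(N-\gamma)/(2t)}$ and $\epsilon^{N/(2t)'-(N-1-\gamma/(2t))}$, whose product is of order $\epsilon^{N/2+1}=o(\epsilon)$ as $\epsilon\to 0$ --- the missing $\frac12$ in the exponent budget appears exactly as the spurious factor $\epsilon^{N/2}$. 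Nor can the idea be repaired by using the correct third exponent: with the admissible triple $\bigl(2,\,2t,\,\frac{2t}{t-1}\bigr)$ the last factor becomes $\bigl(\int_{\Omega_1}|x-y|^{-(N-1-\frac{\gamma}{2t})\frac{2t}{t-1}}\,dy\bigr)^{\frac{t-1}{2t}}$, and finiteness would require $N(t+1)<2t+\gamma$, which is impossible since $\gamma<N-2$ forces $(N-2)t<-2$. So the whole strategy of pulling the full weighted energy $\bigl(\int_{\Omega_1}\rho|\nabla w|^2\bigr)^{1/2}$ out of the potential \emph{pointwise in $x$} cannot give the theorem; no constant-in-$x$ bound of that form exists with the stated range of exponents.

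The paper avoids this by using only the two-exponent H\"older $(2t,(2t)')$: it keeps $\bigl(|\nabla w|\rho^{1/2}\bigr)^{(2t)'}$ \emph{inside} the Riesz potential, so that after the first factor is bounded by $(C^*_1)^{1/(2t)}$ one is left with $\bigl(V_\mu\bigl[(|\nabla w|\rho^{1/2})^{(2t)'},\Omega_1\bigr](x)\bigr)^{1/(2t)'}$ with $\mu=1-\frac1N\bigl(N-1-\frac{\gamma}{2t}\bigr)(2t)'$ (here $\gamma>N-2t$ gives $\mu>0$). The gain in integrability then comes from the $x$-integration: one takes the $L^{q(2t)'}(\Omega)$ norm, uses Minkowski to separate the two pieces, and applies Lemma \ref{le:potential} with $f=(|\nabla w|\rho^{1/2})^{(2t)'}$ and $\frac1m=\frac{(2t)'}{2}$, which is precisely where $\|f\|_{L^m}=\bigl(\int_{\Omega_1}\rho|\nabla w|^2\bigr)^{(2t)'/2}$ and the constant $C_M$ emerge, and where the relation $q(2t)'=2^*(t)$ fixes the range of $q$. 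Your proposal would be correct if you replaced the pointwise triple-H\"older step by this $L^m\to L^q$ mapping argument for $V_\mu$; as written, the key step fails. (Your preliminary reduction of Condition (PE) is also heavier than needed: in the application the paper verifies \eqref{eq:hip} directly for the odd reflection across $\{y=\beta\}$ with $\int w=0$, rather than via a zero extension in $W^{1,1}_0$.)
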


\begin{remark}\label{g675}
Note that the largest value of $2^*(t)$ is obtained at the limiting case $t\approx \frac{p-1}{p-2}$, and $\gamma \approx (N-2)$,
$\gamma =0$ for $N=2$. We
have therefore that  \eqref{FTFTnddjncj} holds for any $q<\tilde2^*$ where
\[
\frac{1}{ \tilde2^*}=\frac{1}{2}-\frac{1}{N}+\frac{p-2}{p-1}\,\cdot\,\frac{1}{N}\, ,
\]
Moreover one has $\tilde2^*>2$.
\end{remark}
\begin{proof}
Without loss of generality we assume $w$ belonging to  $C^1(\Omega)$ or $C_0^1(\Omega)$ depending on the case $(i)$ or $(ii)$ of Condition (PE). Hence equation \eqref{eq:hip} implies
\begin{equation}\label{eq:hip_real}|w(x)|\leq \hat C \int_{\Omega}\frac{|\nabla w(y)|}{|x-y|^{N-1}}dy.
\end{equation}
Then
\begin{eqnarray}\nonumber
|w(x)|&\leq& \hat C\int_{\Omega_1}\frac{|\nabla w(y)|}{|x-y|^{N-1}}dy+\hat C\int_{\Omega_2}\frac{|\nabla w(y)|}{|x-y|^{N-1}}dy\\\nonumber
&\leq&\hat C\int_{\Omega_1}\frac{1}{\rho^\frac 12|x-y|^{\frac{\gamma}{2t}}}\frac{|\nabla w(y)|\rho^{\frac 12}}{|x-y|^{N-1-\frac{\gamma}{2t}}}dy+\hat C\int_{\Omega_2}\frac{1}{\eta^\frac 12|x-y|^{\frac{\gamma}{2t}}}\frac{|\nabla w(y)|\eta^{\frac 12}}{|x-y|^{N-1-\frac{\gamma}{2t}}}dy\\\nonumber
&\leq&\hat C\left(\int_{\Omega_1}\frac{1}{\rho^t|x-y|^\gamma}dy\right)^{\frac{1}{2t}}\left(\int_{\Omega_1}\frac{\left(|\nabla w(y)|\rho^{\frac 12}\right)^{(2t)'}}{|x-y|^{(N-1-\frac{\gamma}{2t})(2t)'}}dy\right)^{\frac{1}{(2t)'}}\\\nonumber
&+&\hat C\left(\int_{\Omega_2}\frac{1}{\eta^t|x-y|^\gamma}dy\right)^{\frac{1}{2t}}\left(\int_{\Omega_2}\frac{\left(|\nabla w(y)|\eta^{\frac 12}\right)^{(2t)'}}{|x-y|^{(N-1-\frac{\gamma}{2t})(2t)'}}dy\right)^{\frac{1}{(2t)'}},
\end{eqnarray} where  in the last inequality we used H\"older inequality with $\frac{1}{2t}+\frac{1}{(2t)'}=1$. Hence
\begin{eqnarray}\label{eq:w1}
|w(x)|&\leq& \hat C (C^*_1)^{\frac{1}{2t}}\left(\int_{\Omega_1}\frac{\left(|\nabla w(y)|\rho^{\frac 12}\right)^{(2t)'}}{|x-y|^{(N-1-\frac{\gamma}{2t})(2t)'}}dy\right)^{\frac{1}{(2t)'}}\\\nonumber
&+&\hat C (C^*_2)^\frac{1}{2t}\left(\int_{\Omega_2}\frac{\left(|\nabla w(y)|\eta^{\frac 12}\right)^{(2t)'}}{|x-y|^{(N-1-\frac{\gamma}{2t})(2t)'}}dy\right)^{\frac{1}{(2t)'}}.
\end{eqnarray}
We point out that
\begin{equation}\label{eq:f_summability}
(|\nabla w|\rho^{\frac 12})^{(2t)'}\in L^{\frac{2}{(2t)'}}(\Omega_1)\qquad \text{and}\qquad (|\nabla w|\eta^{\frac 12})^{(2t)'}\in L^{\frac{2}{(2t)'}}(\Omega_2).
\end{equation}
From \eqref{eq:w1}, by using equation \eqref{eq:V} with $\mu=1 -\frac{1}{N}(N-1-\frac{\gamma}{2t})(2t)'$, we obtain
\begin{eqnarray}\label{eq:w2}
|w(x)|&\leq& \hat C (C^*_1)^{\frac{1}{2t}}\left(V_\mu\left[\left(|\nabla w(y)|\rho^\frac12\right)^{(2t)'},\Omega_1\right](x)\right)^\frac{1}{(2t)'}\\\nonumber
&+& \hat C (C^*_2)^{\frac{1}{2t}}\left(V_\mu\left[\left(|\nabla w(y)|\eta^\frac12\right)^{(2t)'},\Omega_2\right](x)\right)^\frac{1}{(2t)'}.
\end{eqnarray}
Moreover we remark that the assumption $\gamma>N-2t$ implies $\mu>0$.

\

We shall use now Lemma \ref{le:potential} setting
$$\displaystyle \frac 1m=\frac{(2t)'}{2},$$ see~\eqref{eq:f_summability}.  In order to apply \eqref{eq:potential}, since by assumption $Nt-2N+2t+\gamma>0$,   a direct calculation shows  that it is possible to find a $q>1$ such that
$$\frac 1m - \frac 1q <\mu.
$$
From \eqref{eq:w1} we have
\begin{eqnarray}\nonumber\\\nonumber
\left( \int_{\Omega}|w(x)|^{q(2t)'}dx\right)^{\frac{1}{q(2t)'}}&\leq&
 \Bigg(\int_{\Omega} \bigg(\hat C (C^*_1)^{\frac{1}{2t}}\Big(V_\mu\left[\left(|\nabla w(y)|\rho^\frac12\right)^{(2t)'},\Omega_1\right](x)\Big)^\frac{1}{(2t)'}\\\nonumber &+&\hat C (C^*_2)^{\frac{1}{2t}}\Big(V_\mu\left[\left(|\nabla w(y)|\eta^\frac12\right)^{(2t)'},\Omega_2\right](x)\Big)^\frac{1}{(2t)'}\bigg)^{q(2t)'}dx\Bigg)^\frac{1}{q(2t)'}
\end{eqnarray}
and by Minkowski inequality
\begin{eqnarray}\label{eq:ws1}\\
\nonumber
\left( \int_{\Omega}|w(x)|^{q(2t)'}dx\right)^{\frac{1}{q(2t)'}}&\leq&
 \hat C (C^*_1)^{\frac{1}{2t}}\Bigg\|V_\mu\left[\left(|\nabla w(y)|\rho^\frac12\right)^{(2t)'},\Omega_1\right](x)\Bigg\|_{L^q(\Omega)}^\frac{1}{(2t)'}
\\\nonumber &+&
\hat C (C^*_2)^{\frac{1}{2t}}\Bigg\|V_\mu\left[\left(|\nabla w(y)|\eta^\frac12\right)^{(2t)'},\Omega_2\right](x)\Bigg\|_{L^q(\Omega)}^\frac{1}{(2t)'}.
\end{eqnarray}

From \eqref{eq:ws1}, by using  Lemma \ref{le:potential} we get
\begin{eqnarray}\\\nonumber
\left( \int_{\Omega}|w(x)|^{q(2t)'}\right)^{\frac{1}{q(2t)'}}&\leq&
\hat C(C^*_1)^{\frac{1}{2t}}\left(\left(\frac{1-\delta}{\frac \alpha N-\delta}\right)^{1-\delta} \omega_n^{1-\frac{\alpha}{N}}|\Omega|^{\frac{\alpha}{N}-\delta}\right)^\frac{1}{(2t)'}\left(\int_{\Omega_1}\rho|\nabla w|^2\right)^{\frac 12}\\\nonumber &+&\hat C(C^*_2)^{\frac{1}{2t}}\left(\left(\frac{1-\delta}{\frac \alpha N-\delta}\right)^{1-\delta} \omega_n^{1-\frac{\alpha}{N}}|\Omega|^{\frac{\alpha}{N}-\delta}\right)^\frac{1}{(2t)'}\left(\int_{\Omega_2}\eta|\nabla w|^2\right)^{\frac 12},
\end{eqnarray}
that gives  \eqref{eq:2*} with $q(2t)'=2^*(t)$ and \eqref{eq:Cs} with $C_M=\left(\frac{1-\delta}{\frac \alpha N-\delta}\right)^{1-\delta} \omega_n^{1-\frac{\alpha}{N}}|\Omega|^{\frac{\alpha}{N}-\delta}$.
\end{proof}
Now we are ready to state the following
\begin{corollary}[Weighted Poincar\'e inequality]\label{cor:PoincarŽ}
Let $w$ be as in one of the following cases
\begin{itemize}
\item [$(i)$] $w\in H^{1,2}_{0,\rho}(\Omega)\cap H^{1,2}_{0,\eta}(\Omega)$,
\item [$(ii)$] $w\in H^{1,2}_{\rho}(\Omega)\cap H^{1,2}_{\eta}(\Omega)$ such that ${\displaystyle \int_\Omega w=0}$ and $\Omega$ convex,
\end{itemize}
and  $\Omega_1$ and $\Omega_2$ such that $\Omega_1\subset\Omega$, $\Omega_2 \subset\Omega$, $\Omega_1 \cap\Omega_2= \emptyset$ and $\overline{\Omega_1 \cup\Omega_2}= \overline{\Omega}$.

\

Then, if the weights $\rho$ and $\eta$ fulfill \eqref{eq:weight}, then
$$\int_{\Omega}w^2\leq {C_p}(\Omega)\hat C^2\left((C^*_1)^{\frac{1}{t}}(C_M)^\frac{2}{(2t)'} \int_{\Omega_1}\rho|\nabla w|^2+(C^*_2)^{\frac{1}{t}}(C_M)^\frac{2}{(2t)'} \int_{\Omega_2}\eta|\nabla w|^2\right),$$
where $\hat C, C^*_1, C^*_2, C_M$ are as in Theorem \ref{thm: Sobolev} and with ${C_p}(\Omega)\rightarrow 0$ if $|\Omega|\rightarrow 0$.\\
In particular, given any $0<\theta<1$, we can assume that
\begin{equation}\label{estimacav}
C_p(\Omega)\,\leq\,C\,|\Omega|^{\frac{2\,\theta}{(p-1)N}}\,.
\end{equation}
\end{corollary}
\begin{proof}  Choose $2< q<\tilde 2^*$. By Holder inequality we get:
\begin{equation}\label{poinstettema}
\int_{\Omega}w^2\leq\left(\int_{\Omega}w^q\right)^\frac 2q |\Omega|^\frac{q-2}{q},
\end{equation}
and then  using Theorem \ref{thm: Sobolev} one has
$$\int_{\Omega}w^2\leq {C_p}(\Omega)\hat C^2\left((C^*_1)^{\frac{1}{t}}(C_M)^\frac{2}{(2t)'} \int_{\Omega_1}\rho|\nabla w|^2+(C^*_2)^{\frac{1}{t}}(C_M)^\frac{2}{(2t)'} \int_{\Omega_2}\eta|\nabla w|^2\right).$$
By \eqref{poinstettema} and direct computation it follows \eqref{estimacav}.
\end{proof}
\section{A Weak Comparison Principle in narrow domains, Proof of Theorem \ref{th:wcpstrip}}\label{th:wcpstripsect}

We prove here below Theorem \ref{th:wcpstrip}. Let us start considering the case when ($H_1$) is assumed to hold, that is
($f_1$) holds and $2<p<3$. Since $u$ and $v$ are bounded, in the formulation of ($f_1$)  we fix $\mathcal{M}=\max\{\|u\|_\infty\,;\,\|v\|_\infty\}$
and $a=a(\mathcal{M})>0$ and $A=A(\mathcal{M})>0$ such that
\begin{equation}\label{ftfthuhuhcvcvcdffddfsesees}
\begin{split}
&a\, u^q\leq f(u)\leq A\, u^q\qquad \text{and } \qquad  \big |f'(u)\big |\leq A \,u^{q-1}\\
&a\, v^q\leq f(v)\leq A\, u^q\qquad \text{and } \qquad  \big |f'(v)\big |\leq A \,v^{q-1}
\end{split}
\end{equation}

%\begin{remark}\label{ftftfttconhgsghshgs}
% If the nonlinearity $f$ is Locally Lipschitz continuous and ($f_1$) holds, it follows also that, given any interval $[0\,,\, \mathcal{M}]$, changing eventually $a=a(\mathcal{M})\in\mathbb{R}^+$ and $A=A(\mathcal{M})\in\mathbb{R}^+$, we have
%\[
%a\, s^q\leq f(s)\leq A \,s^{q}
%\]
%for any $s\in [0\,,\, \mathcal{M}]$ and for some $q\geq p-1$.
%\end{remark}

\noindent In the sequel we further use the following  inequalities: \\

$\forall \eta, \eta' \in  \mathbb{R}^{N}$ with $|\eta|+|\eta'|>0$ there exists positive constants $C_1, C_2$ depending on $p$ such that
\begin{eqnarray}\label{eq:inequalities}
[|\eta|^{p-2}\eta-|\eta'|^{p-2}\eta'][\eta- \eta'] \geq C_1 (|\eta|+|\eta'|)^{p-2}|\eta-\eta'|^2, \\ \nonumber\\\nonumber
||\eta|^{p-2}\eta-|\eta'|^{p-2}\eta '|\leq C_2 (|\eta|+|\eta'|)^{p-2}|\eta-\eta '|,\\\nonumber\\\nonumber
[|\eta|^{p-2}\eta-|\eta'|^{p-2}\eta '][\eta-\eta ']\geq C_3 |\eta-\eta '|^p \qquad\mbox{if}\quad p\geq 2.
\end{eqnarray}
First of all we remark that  $(u-v)^+ \in L^{\infty}(\Sigma_{(\lambda, \beta)})$ since we assumed $u,v$ to be bounded in $\Sigma_{(\lambda, \beta)}$.\\

Let us now define
\begin{equation}\label{Eq:Cut-off}\Psi=[(u-v)^+]^{\alpha} \varphi_R^2,\end{equation} where $\alpha > 1,$ will be fixed later and $\varphi_R(x',y)=\varphi_R(x') \in C^{\infty}_c (\mathbb{R}^{N-1}) $, $\varphi_R \geq 0$ such that
\begin{equation}\label{Eq:Cut-off1}
\begin{cases}
\varphi_R \equiv 1, & \text{ in } B^{'}(0,R) \subset \mathbb{R}^{N-1},\\
\varphi_R \equiv 0, & \text{ in } \mathbb{R}^{N-1} \setminus B^{'}(0,2R),\\
|\nabla \varphi_R | \leq \frac CR, & \text{ in } B^{'}(0, 2R) \setminus B^{'}(0,R) \subset  \mathbb{R}^{N-1},
\end{cases}
\end{equation}
where $B^{'}(0,R)$ denotes the ball in $\mathbb{R}^{N-1}$ with center $0$ and radius $R>0$. From now on, for the sake of simplicity, we set $\varphi_R(x',y):=\varphi(x',y)$.

\

We note that $\Psi \in W_0^{1,p}(\Sigma_{(\lambda, \beta)})$ by \eqref{Eq:Cut-off1} and since $u\leq v \text{ on } \partial \Sigma_{(\lambda, \beta)}$.

Let us define the cylinder $$\mathcal{C}_{(\lambda, \beta)}(R)=\mathcal{C}(R):=\left\{ \Sigma_{(\lambda, \beta)}\cap \overline{\{B^{'}(0,R)\times \mathbb{R}\}} \right\}.$$
Then using $\Psi$ as test function in both equations of problem \eqref{Eq:WCP} and substracting we get
\begin{eqnarray}\label{eq:cn1}\\\nonumber
&&\alpha\int_{\mathcal{C}(2R)}\big(|\nabla u|^{p-2}\nabla u-|\nabla v|^{p-2}\nabla v,\nabla(u-v)^+\big)[(u-v)^+]^{\alpha-1}\varphi^2\\\nonumber&+&\int_{\mathcal{C}(2R)}\big(|\nabla u|^{p-2}\nabla u-|\nabla v|^{p-2}\nabla v,\nabla \varphi^2\big)[(u-v)^+]^{\alpha}\\\nonumber
&= &\int_{\mathcal{C}(2R)}(f(u)-f(v))[(u-v)^+]^{\alpha}\varphi^2.
\end{eqnarray} By \eqref{eq:inequalities} and the fact that $p \geq 2$, from \eqref{eq:cn1} one has
\begin{eqnarray}\label{eq:cn2}
&&\alpha \dot  C\int_{\mathcal{C}(2R)}(|\nabla u|+|\nabla v|)^{p-2}|\nabla(u-v)^+|^2[(u-v)^+]^{\alpha-1}\varphi^2 \\\nonumber &\leq&\alpha\int_{\mathcal{C}(2R)}\big(|\nabla u|^{p-2}\nabla u-|\nabla v|^{p-2}\nabla v,\nabla(u-v)^+\big)[(u-v)^+]^{\alpha-1}\varphi^2\\\nonumber
&=& - \int_{\mathcal{C}(2R)}\big(|\nabla u|^{p-2}\nabla u-|\nabla v|^{p-2}\nabla v,\nabla \varphi^2\big)[(u-v)^+]^{\alpha}\\\nonumber &+&\int_{\mathcal{C}(2R)}(f(u)-f(v))[(u-v)^+]^{\alpha}\varphi^2\\\nonumber &\leq&
\int_{\mathcal{C}(2R)}\left |\big(|\nabla u|^{p-2}\nabla u-|\nabla v|^{p-2}\nabla v,\nabla \varphi^2\big)\right|[(u-v)^+]^{\alpha} \\\nonumber &+&\int_{\mathcal{C}(2R)}(f(u)-f(v))[(u-v)^+]^{\alpha}\varphi^2\\\nonumber
&\leq&\check C\int_{\mathcal{C}(2R)} (|\nabla u|+|\nabla v|)^{p-2}|\nabla(u-v)^+| |\nabla \varphi^2| [(u-v)^+]^{\alpha} \\\nonumber &+&\int_{\mathcal{C}(2R)}(f(u)-f(v))[(u-v)^+]^{\alpha}\varphi^2,\end{eqnarray}
where in the last line we used Schwarz inequality and the second of \eqref{eq:inequalities}.

\

Setting
\begin{equation}\label{eq:I1}{ I_1:=\check C\int_{\mathcal{C}(2R)} (|\nabla u|+|\nabla v|)^{p-2}|\nabla(u-v)^+| |\nabla \varphi^2| [(u-v)^+]^{\alpha} }
\end{equation}
and
\begin{equation}\label{eq:I2'}
I_2:=\int_{\mathcal{C}(2R)}(f(u)-f(v))[(u-v)^+]^{\alpha}\varphi^2,
\end{equation}
equation \eqref{eq:cn2} becomes
\begin{equation}\label{eq:cn33}
\alpha \dot  C\int_{\mathcal{C}(2R)}(|\nabla u|+|\nabla v|)^{p-2}|\nabla(u-v)^+|^2[(u-v)^+]^{\alpha-1}\varphi^2\leq I_1+I_2.
\end{equation}
We proceed in three steps:

\

\emph{ Step 1: Evaluation  of $I_1$.}\\

\noindent From \eqref{eq:I1}, we obtain
\begin{eqnarray}\label{eq:cn3}\\\nonumber
I_1&=&2\check C\int_{\mathcal{C}(2R)} (|\nabla u|+|\nabla v|)^{p-2}|\nabla(u-v)^+| \varphi|\nabla \varphi| [(u-v)^+]^{\alpha} \\\nonumber
&=&2\check C\int_{\mathcal{C}(2R)} (|\nabla u|+|\nabla v|)^{\frac{p-2}{2}}|\nabla(u-v)^+| \varphi [(u-v)^+]^{\frac{\alpha-1}{2}} (|\nabla u|+|\nabla v|)^{\frac{p-2}{2}}|\nabla \varphi|  [(u-v)^+]^{\frac{\alpha+1}{2}}\\\nonumber &\leq&\delta' \check C\int_{\mathcal{C}(2R)}(|\nabla u|+|\nabla v|)^{p-2}|\nabla(u-v)^+|^2
\varphi^2[(u-v)^+]^{\alpha-1}
\\\nonumber
&+&\frac{\check C}{\delta'}\int_{\mathcal{C}(2R)}(|\nabla u|+|\nabla v|)^{p-2}
|\nabla\varphi|^2[(u-v)^+]^{\alpha+1},
\end{eqnarray} where in the last inequality  we used weighted Young inequality, and $\delta'$ will be chosen later.
 Hence
 \begin{equation}\label{eq:supI_1}
 I_1\leq I_1^a+I_1^b,
 \end{equation} where
 \begin{eqnarray}\label{eq:supI_1^a}
 I_1^a&:=&\delta' \check C\int_{\mathcal{C}(2R)}(|\nabla u|+|\nabla v|)^{p-2}|\nabla(u-v)^+|^2
\varphi^2[(u-v)^+]^{\alpha-1},\\\nonumber
I_1^b&:=&\frac{\check C}{\delta'}\int_{\mathcal{C}(2R)}(|\nabla u|+|\nabla v|)^{p-2}
|\nabla\varphi|^2[(u-v)^+]^{\alpha+1}.
 \end{eqnarray}

\

Let us consider now $\overline{N}=\overline{N}(R)$ cubes $Q_i$ with edge $l=\beta - \lambda$ and with  the $y-$coordinate of the center, say $y_C$,   such that  $y_C=\frac{\beta+\lambda}{2}$. More precisely we indicate with $(x_0^i\,,\,\frac{\beta+\lambda}{2})$ the center of the cube $Q_i$.
Moreover we assume that   $Q_i\cap Q_j=\emptyset$ for $i\neq j$ and
\begin{equation}\label{eq:Qunion}
\bigcup_{i=1}^{\overline{N}}\overline{Q_i}\supset \mathcal{C}(2R).
\end{equation}
It follows as well,  that each cube $Q_i$ has diameter
\begin{equation}\label{eq:diameterQ}
\text{diam}(Q_i)=d_Q=\sqrt{N}(\beta-\lambda), \qquad  i=1,\cdots,\overline{N}.
\end{equation}
The idea in considering the union  \eqref{eq:Qunion}, is to use in each cube $Q_i$ the weighted Poincar\'e inequality, see  Corollary \ref{cor:PoincarŽ} and taking advantage of the constant $\hat C$ that turns to be not depending on the index $i$ of \eqref{eq:Qunion}.   In fact let us define
\begin{equation}\label{eq:w}
w(x):=
\begin{cases}
\Big(u-v\Big)^+(x',y) & \text{if } (x',y)\in \overline{Q}_i; \\
-\Big(u-v\Big)^+(x',2\beta-y) & \text{if } (x',y)\in  \overline{Q}_i^r,
\end{cases}
\end{equation}
where $(x',y)\in \overline{Q}_i^r$ iff $(x',2\beta-y)\in \overline{Q}_i$.
\\
Since ${\displaystyle{\int_{Q_i \cup Q_i^r}w(x)dx=0}}$, we have that
$$w(x)=\hat C\int_{{Q_i \cup Q_i^r}}\frac{(x_i-z_i)D_iw(z)}{|x-z|^N}dz\quad  \,\,\text{a.e. } x\in {Q_i \cup Q_i^r},$$
where $\hat C= \frac{(\beta-\lambda)^N}{N|Q_i \cup Q_i^r|}$. Then for almost every $ x\in {Q_i}$ one has
\begin{eqnarray}\label{eq:wchanging}
|w(x)|&\leq& \hat C\int_{{Q_i \cup Q_i^r}}\frac{|\nabla w(z)|}{|x-z|^{N-1}}dz\\\nonumber
&=&\hat C\int_{{Q_i}}\frac{|\nabla w(z)|}{|x-z|^{N-1}}dz+\hat C\int_{{Q_i^r}}\frac{|\nabla w(z)|}{|x-z|^{N-1}}dz\\\nonumber
&\leq& 2\hat C\int_{{Q_i}}\frac{|\nabla w(z)|}{|x-z|^{N-1}}dz\,,
\end{eqnarray}
where in the last line we used, the following  standard changing of variables
$$
\begin{cases}
z_1^t=z_1\\
\vdots
\\
z_{N-1}^t=z_{N-1}\\
z_N^t=2\beta - z_N,
\end{cases}
$$
the fact that for $x\in Q_i$, one has $(|x-z|)\Big |_{z\in Q_i}\leq (|x-z^t|)\Big |_{z\in Q_i}$ and that, by \eqref{eq:w} it holds $|\nabla w(z)|=|\nabla w(z^t)|$.  Once we have \eqref{eq:wchanging}, the proof of Theorem \ref{thm: Sobolev} applies by considering $\Omega=Q_i$,  that is the case we are interested here.

\

\begin{itemize}
\item[-] We analyze the term $I_1^b$.
\end{itemize}

\noindent By \eqref{Eq:Cut-off1} and   since $\nabla u, \nabla v \in L^{\infty}(\Sigma_{(\lambda,y_0)})$, we have
\begin{eqnarray}\label{eq:cn4}
I_1^b\leq\sum_{i=1}^{\overline{N}}\frac{C}{\delta' R^2}\int_{\mathcal{C}(2R) \cap Q_i }\left([(u-v)^+]^{\frac{\alpha+1}{2}}\right)^2.
\end{eqnarray}
Then we are going to  use Corollary  \ref{cor:PoincarŽ} with
$$\Omega_1^i=\mathcal{C}(2R) \cap Q_i \cap\{u > \frac{1}{R^{m}}\}$$ and
$$\Omega_2^i=\mathcal{C}(2R) \cap Q_i \cap\{u \leq \frac{1}{R^{m}}\},$$ 
with $m>0$ to be chosen later and considering the weight $\eta \equiv 1$ in $\Omega_2^i$ and the weight $\rho \equiv |\nabla u|^{p-2}$ in $\Omega_1^i$.\\
\noindent At this stage, it is important to note that actually each domain $\Omega^i_{1}$ and $\Omega^i_{2}$  depends in fact on $R$.  Anyway, to make simpler  the reading,   we use the notation $\Omega^i_{1}$ instead of $\Omega^i_{1}(R)$ and $\Omega^i_{2}$ instead of $\Omega^i_{2}(R)$.\\
\noindent 
We set
\begin{equation}\label{eq:Csharp}
C^{\sharp}=C_p(\Omega_1^i)\cdot\hat C^2(d_Q)\cdot(C^*_1)^{\frac{1}{t}}\cdot(C_M)^\frac{2}{(2t)'},
\end{equation}
where all the constants are those given in Corollary \ref{cor:PoincarŽ}. Let us emphasize that the Poincar\'{e} constant in $\Omega_2^i$
is estimated as for the standard (not-weighted) case, since we choose $\eta\equiv 1$ in  $\Omega_2^i$.

\

Thus,   using  Corollary \ref{cor:PoincarŽ} and the classical inequality $(a+b)^p\leq2^p(a^p+b^p)$, for $a,b>0$,  we get from \eqref{eq:cn4}
\begin{eqnarray}\label{eq:cn5}\\\nonumber
I_1^b&\leq&\sum_{i=1}^{\overline{N}}\Bigg( C^{\sharp}\frac{C}{\delta' R^2}\int_{\Omega_1^i}|\nabla u|^{p-2}\left|\nabla[(u-v)^+]^{\frac{\alpha+1}{2}}\right|^2\\\nonumber
&+&\frac{C(\Omega_2^i,d_Q,\alpha)}{\delta' R^2}\int_{\Omega_2^i}[(u-v)^+]^{\alpha-1}|\nabla(u-v)^+|^2 \Bigg)\\\nonumber
&\leq&\sum_{i=1}^{\overline{N}} C^{\sharp}\frac{C(\alpha,\delta' )}{R^2}\int_{\Omega_1^i}|\nabla u|^{p-2}[(u-v)^+]^{\alpha -1}\left|\nabla[(u-v)^+\right|^2\\\nonumber
&+&2^{N-1}\beta\omega_{N-1} \frac{C(\Omega_2^i,d_Q,\alpha,\delta')}{R^{2+m(\alpha-1)}}R^{N-1},
\end{eqnarray}
being $2^{N-1}{\displaystyle\beta R^{N-1}\omega_{N-1}\geq\sum_{i=1}^{\overline{N}}|\Omega_2^i|}$, where $\omega_{N-1}$ is the volume of the unit ball in $\mathbb{R}^{N-1}$. Thus \eqref{eq:cn5} states as
\begin{eqnarray}\label{eq:cn55}\\\nonumber
I_1^b&\leq&\sum_{i=1}^{\overline{N}}C^{\sharp}\frac{C(\alpha,\delta')}{R^2}\int_{\Omega_1^i}|\nabla u|^{p-2}[(u-v)^+]^{\alpha -1}\left|\nabla[(u-v)^+\right|^2\\\nonumber
&+&\frac{C(\Omega_2^i,d_Q,\alpha,\delta',\beta,N)}{R^{2+m(\alpha -1)+1-N}}.
\end{eqnarray}
To estimate  $C^\sharp$  we are going to estimate the constant $C^*_1$ in   \eqref{eq:Csharp}.

\

Since we are considering the domain $\Omega_1^i=\mathcal{C}(2R) \cap Q_i \cap\{u > \frac{1}{R^{m}}\}$, we have  that
\begin{equation}\label{eq:dist}dist\Big(\Omega^i_1, \{u=0\}\Big) \,\geq\,\frac{1}{\|\nabla \,u\|_\infty \,} \frac{1}{R^m}> \frac{C}{R^m},
\end{equation}
for some positive constant $C$, that does not depend on $R$ since $|\nabla u|$ is bounded. In fact by mean value theorem one has $u(x',y)\leq C y$, that implies \eqref{eq:dist} by the definition of $\Omega_1^i.$

\noindent Now we apply  Corollary \ref{cor:SobConstant} with
 $\delta=\frac{\epsilon}{R^m}$
 and $\epsilon$ fixed sufficiently small in order that
 \[
 u > \frac{1}{2\,R^{m}}
 \]
 in the neighborhood of radius $\delta $ of  $\Omega_1^i$. Note that such $\epsilon>0$ exists, and does not depend on $R$, since the gradient of $u$ is bounded.

\noindent  Moreover the number $S=S(\delta)$ for the covering of every $Q_i$ (see Proposition \ref{pro:SobConstant} and Remark \ref{countingSdelta}) can be estimated by $$S\leq C\,R^{m\,N},$$
 for some constant $C>0$.

\

\noindent Exploiting Corollary \ref{cor:SobConstant} (see \eqref{eq:SobConstant}), we obtain
$$C_1^*\leq C \,R^{(N+2q+2)m} \quad \text{in } \Omega^i_{1}.$$
Thus equation \eqref{eq:cn55}, by using \eqref{eq:Csharp},   becomes
\begin{eqnarray}\label{eq:cn6}\\\nonumber
I_1^b&\leq&C^\flat\int_{\mathcal{C}(2R)}(|\nabla u|+|\nabla v|)^{p-2 }[(u-v)^+]^{\alpha -1}\left|\nabla(u-v)^+\right|^2\\\nonumber
&+&\frac{C(\Omega_2^i,d_Q,\alpha,\delta',\beta,N)}{R^{2+m(\alpha -1)+1-N}},
\end{eqnarray}
with
\begin{equation}\label{eq:Cflat}
C^\flat=C\cdot C_p(\Omega_1^i)\cdot\hat C^2(d_Q)\cdot(C_M)^\frac{2}{(2t)' }\cdot R^{(N+2q+2)\frac m t-2}.
\end{equation}
 It is here that we choose $m$ small and $\alpha$ big such that
 \begin{itemize}
 \item [$(i)$] ${\displaystyle (N+2q+2)\frac m t-2\leq -1}$;
 \\
 \item [$(ii)$] $2+m(\alpha -1)+1-N \geq 1$.
 \end{itemize}
Note that later $t$ will be fixed close to $\frac{p-1}{p-2}$.
 We point out  that  condition $(i)$ holds true for  $m $ close to zero,    since $t>1$  (see  Theorem \ref{thm: Sobolev}); instead  condition $(ii)$ is satisfied for $\alpha$ sufficiently large. \\

 It is crucial here that the choice of $m$ and $\alpha$ does not depend on $\lambda$ neither on  $d_Q$.\\

\noindent From \eqref{eq:cn6},  we have
\begin{eqnarray}\label{eq:cn6f}\\\nonumber
I_1^b&\leq&\frac{C_1 (\Omega_1^i,d_Q,\delta')}{R}\int_{\mathcal{C}(2R)}(|\nabla u|+|\nabla v|)^{p-2 }[(u-v)^+]^{\alpha -1}\left|\nabla[(u-v)^+\right|^2\\\nonumber
&+&\frac{C_2(\Omega_2^i,d_Q,\delta')}{R},
\end{eqnarray}
for some positive constants
\begin{eqnarray}\label{eq:C_1C_2}
C_1(\Omega_1^i,d_Q,\delta')\rightarrow0  \qquad &\text{if }& \quad |Q_i| \rightarrow 0 \,\,  \text{or } d_Q  \rightarrow 0 \,,\\\nonumber
C_2(\Omega_2^i,d_Q,\delta') \rightarrow0  \qquad &\text{if }& \quad |Q_i| \rightarrow 0 \,\,  \text{or } d_Q  \rightarrow 0.
\end{eqnarray}
Moreover we remark  that, for the sake of simplicity and reader convenience, we have explicited in the constants $C_1(\cdot, \cdot,\cdot)$ and  $C_2(\cdot, \cdot,\cdot)$ only the dependence on the parameter that in the sequel we are going to use.

\

Thus, by using \eqref{eq:supI_1^a} and \eqref{eq:cn6f}, equation \eqref{eq:supI_1} states as\begin{eqnarray}\label{eq:1I1}
I_1&\leq&\delta' \check C\int_{\mathcal{C}(2R)}(|\nabla u|+|\nabla v|)^{p-2}|\nabla(u-v)^+|^2
\varphi^2[(u-v)^+]^{\alpha-1}\\\nonumber
&+&\frac{C_1 (\Omega_1^i,d_Q,\delta')}{R}\int_{\mathcal{C}(2R)}(|\nabla u|+|\nabla v|)^{p-2 }[(u-v)^+]^{\alpha -1}\left|\nabla[(u-v)^+\right|^2\\\nonumber
&+&\frac{C_2(\Omega_2^i,d_Q,\delta')}{R}.
\end{eqnarray}

\

\emph{ Step 2: Evaluation  of  $I_2$.} \\

\noindent We set
 \begin{eqnarray}\label{eq:I2}
 I_2=\int_{\mathcal{C}(2R)}\frac{f(u)-f(v)}{(u-v)^+}[(u-v)^+]^{\alpha+1}\varphi^2,
 \end{eqnarray}
 and
 \begin{equation}\label{eq:alfa}
 {\displaystyle \underline{\kappa}_i=\inf_{Q_{i_{\bar\delta}}}v} \qquad\text{and} \qquad \overline{\kappa}_i=\sup_{Q_{i_{\bar\delta}}}v,\end{equation}

 where
 \[
 Q_{i_{\bar\delta}}:=\{x\in\mathbb{R}^N\,|\, \text{dist}(x\,,\,Q_i)\leq \bar\delta\}
 \]
 and $\bar\delta$ as in the statement.
 We set
 \[
 v_0^i\,:= v(x_0^i\,,\,\frac{\beta+\lambda}{2})
 \]
 recalling that $(x_0^i\,,\,\frac{\beta+\lambda}{2})$ is the center of the cube $Q_i$.
 By  Harnack inequality we have
 \begin{equation}\label{eq:alfaH}
 \underline{\kappa}_i\leq v_0^i\leq \overline{\kappa}_i\leq C_H\underline{\kappa}_i\leq C_H v_0^i\,.
 \end{equation}
 Let us consider  the two following cases:

 \

 {\bf Case 1: } $q\geq 2$.\\

 \

By Taylor expansion of $f(\cdot)$, we obtain
$$f(u)=f(v)+f'(v)(u-v)+\frac{f''(\xi)}{2}(u-v)^2,$$
 with $v<\xi<u$.
 Then \eqref{eq:I2} turns out to be
 \begin{eqnarray}\label{eq:cn8}
I_2&=&\sum_{i=1}^{\overline{N}}\int_{\mathcal{C}(2R)\cap Q_i} f'(v)[(u-v)^+]^{\alpha+1}+\sum_{i=1}^{\overline{N}}\int_{\mathcal{C}(2R)\cap Q_i}\frac{f''(\xi)}{2}[(u-v)^+]^{\alpha+2}\\\nonumber
&=&I_{2}^a+I_{2}^b,
 \end{eqnarray}
 with
 \begin{equation}\label{eq:I_2a}
 I_{2}^a:=\sum_{i=1}^{\overline{N}}\int_{\mathcal{C}(2R)\cap Q_i} f'(v)[(u-v)^+]^{\alpha+1}
 \end{equation}
 and
 $$I_{2}^b:=\sum_{i=1}^{\overline{N}}\int_{\mathcal{C}(2R)\cap Q_i}\frac{f''(\xi)}{2}[(u-v)^+]^{\alpha+2}.$$

 \

\begin{itemize}
\item[-] We start estimating $I_{2}^a$.
\end{itemize}

\

\noindent We have
 \begin{equation}\nonumber
 I_{2}^a\leq C\sum_{i=1}^{\overline{N}}(\overline{\kappa}_i)^{q-1}\int_{\mathcal{C}(2R)\cap Q_i}\left([(u-v)^+]^{\frac{\alpha+1}{2}}\right)^2\leq
 C(v_0^i)^{q-1}\sum_{i=1}^{\overline{N}}\int_{\mathcal{C}(2R)\cap Q_i}\left([(u-v)^+]^{\frac{\alpha+1}{2}}\right)^2.
 \end{equation}
Setting
\begin{equation}\label{eq:Csharp2}
C^{\sharp}=C_p(Q_i)\cdot\hat C^2(d_Q)\cdot(C^*_1)^{\frac{1}{t}}\cdot(C_M)^\frac{2}{(2t)'},
\end{equation}
(where all the constants are those given in Corollary \ref{cor:PoincarŽ}) by using the weighted Poincar\'{e} inequality given in Corollary \ref{cor:PoincarŽ}, we have
 \begin{equation}\label{eq:cn8}
 \begin{split}
 &I_{2}^a\leq C (v_0^i)^{q-1}C^{\sharp}\sum_{i=1}^{\overline{N}}\int_{\mathcal{C}(2R)\cap Q_i}|\nabla v|^{p-2}|\nabla(u-v)^+|^2[(u-v)^+]^{\alpha-1}\leq\\
& \leq C (v_0^i)^{q-1}C^{\sharp}\sum_{i=1}^{\overline{N}}\int_{\mathcal{C}(2R)\cap Q_i}( |\nabla u|+|\nabla v|)^{p-2}|\nabla(u-v)^+|^2[(u-v)^+]^{\alpha-1}\,,
\end{split}
 \end{equation}
 since $p>2$.
Considering definition \eqref{eq:Csharp2}, we shall estimate  $(v_0^i)^{q-1}(C^*_1)^\frac{1}{t}$.
By our assumption  \eqref{condcrucial}, for $\frac{p-2}{p-1}<r<1$, we can exploit   Theorem \ref{eq:weight}.
In this case our assumption  \eqref{condcrucial}  replaces the general assumption \eqref{eq:weight} in Theorem \ref{eq:weight}.
Thus we get
$$(v_0^i)^{q-1}(C^*_1)^\frac{1}{t}\leq C \beta^{[N-2p+(p-1)r-\gamma]\frac{p-2}{(p-1)r}} (v_0^i)^{q-1+[2p-2-(p-1)r-2q]\frac{p-2}{(p-1)r}}.$$
Here, we are using the relation $\tau=(p-1)r=(p-2)t$. Recall also that $\gamma=0$ if $N=2$, while if $N\geq 3$ we can take any $\gamma <N-2$,
with $\gamma $ sufficiently close to $N-2$ ($\gamma >N-2t$), according to Theorem \ref{eq:weight}.\\

\noindent For $q-1+p-2-2q\frac{p-2}{p-1}> 0$, namely (we use here the assumption $2<p<3$) for:
$$[q-(p-1)] (p-3)<0,$$
we can consequently take $r-1$ sufficiently small such that
\[
q-1+[2p-2-(p-1)r-2q]\frac{p-2}{(p-1)r}>0\,,
\]
and consequently
we get by \eqref{eq:cn8}  
\begin{equation}
\begin{split}
(v_0^i)^{q-1}C^{\sharp}&< C\,(C_M)^\frac{2}{(2t)'}\, C_p(Q_i)  (v_0^i)^{q-1+[2p-2-(p-1)r-2q]\frac{p-2}{(p-1)r}}\beta^{[N-2p+(p-1)r-\gamma]\frac{p-2}{(p-1)r}}\\
&\leq C C_p(Q_i) \beta^{[N-2p+(p-1)r-\gamma]\frac{p-2}{(p-1)r}},
\end{split}
\end{equation}
where we also used that $v_0^i\leq \|v\|_\infty\leq C$.\\
\noindent Recall now that $C_p(Q_i)$ is given by  Corollary \ref{cor:PoincarŽ} (see \eqref{estimacav}).
In particular, for  any $2< q<\tilde 2^*$ (see Remark \ref{g675}) we have
\begin{equation}\label{poinstettema}
C_p(Q_i)\leq C |Q_i|^\frac{q-2}{q}\leq C(\beta-\lambda)^{\frac{2}{p-1}\theta},
\end{equation}
where $\theta$ is any number such that $0<\theta<1$ (actually we take $\theta$ close to $1$).  \\
\noindent For $\frac{2}{p-1}>p-2$, namely (and we use again here the assumption $2<p<3$) for:
\[
p(p-3)<0
\]
we can fix $\theta$ close to $1$, $\gamma$ close to $N-2$, $r$ close to $1$, such that
 \[
 \frac{2}{p-1}\theta+[N-2p+(p-1)r-\gamma]\frac{p-2}{(p-1)r}>0
 \]
so that  we can rewrite
\eqref{eq:cn8} as follows:

\begin{eqnarray}\label{eq:cn8bis}\\\nonumber
 I_{2}^a&\leq& C_{3,a}(d_Q)\int_{\mathcal{C}(2R)}(|\nabla u|+|\nabla v|)^{p-2}|\nabla(u-v)^+|^2[(u-v)^+]^{\alpha-1}\\\nonumber
 \end{eqnarray}
with\begin{equation}\label{eq:C_3_1}
C_{3,a}(d_Q)\rightarrow0  \qquad \text{if } \quad d_Q \rightarrow 0,
\end{equation}
where in the constant, for simplicity, we have stated the dependence on $d_Q$ since it will be used in the sequel. Actually we have
$C_{3,a}(d_Q)\leq C(\beta-\lambda)^s$ for some $s>0$.

\

\begin{itemize}
\item[-] Consider now  the term $I_{2}^b$.
\end{itemize}

\

One has
\begin{eqnarray}\label{eq:cn9}
I_{2}^b&\leq&C\sum_{i=1}^{\overline{N}}\int_{\mathcal{C}(2R)\cap Q_i}\left([(u-v)^+]^\frac{\alpha+2}{p}\right)^p\\\nonumber
&\leq&C\sum_{i=1}^{\overline{N}}C_p(d_Q)\int_{\mathcal{C}(2R)\cap Q_i}[(u-v)^+]^{\alpha+2-p}|\nabla(u-v)^+|^p,
\end{eqnarray}
where we used Poincar\'e inequality in $W^{1,p}(Q_i)$, since $(u-v)^+$ is zero on $\partial \Sigma_{(\lambda,\beta)}$.  Being $u,v,\nabla u, \nabla v \in L^{\infty}(\Sigma_{(\lambda, \beta)})$, since we  have
\begin{equation}\label{eq:sup2}
|\nabla(u-v)^+|^p\leq (|\nabla u|+|\nabla v|)^{p-2}|\nabla(u-v)^+|^2,\end{equation}
from \eqref{eq:cn9} it follows
\begin{eqnarray}\label{eq:cn10}\\\nonumber
I_{2}^b&\leq&C\sum_{i=1}^{\overline{N}}C_p(d_Q)\left(||u||_{\infty}+||v||_{\infty}\right)^{3-p}\int_{\mathcal{C}(2R)\cap Q_i}\left( |\nabla u|+|\nabla v|\right)^{p-2}|\nabla (u-v)^+|^2 [(u-v)^+]^{\alpha-1}\\\nonumber
&\leq&C_{3,b}(d_Q)\int_{\mathcal{C}(2R)} \left( |\nabla u|+|\nabla v|\right)^{p-2}|\nabla (u-v)^+|^2 [(u-v)^+]^{\alpha-1},
\end{eqnarray}
where as above
\begin{equation}\label{eq:C_3_2}
C_{3,b}(d_Q)\rightarrow0  \qquad \text{if } \quad d_Q \rightarrow 0\,,
\end{equation}
and again we are using the assumption $2<p<3$.

\noindent Consider now\\

 \noindent {\bf Case 2: $p-1<q< 2$.}

$$I_2=\int_{\mathcal{C}(2R)}(f(u)-f(v))[(u-v)^+]^{\alpha}\varphi^2.$$
Consider first the  function $(\sqrt u)^{2q}$. By Taylor expansion we have
$$ u^q=v^{q}+2q(\sqrt v)^{2q-1} (\sqrt u-\sqrt v)+2q(2q-1)\xi^{2q-2}(\sqrt u-\sqrt v)^2,$$
with $\sqrt v<\xi<\sqrt u$.
Then, since $u\geq v$,  we obtain
\begin{eqnarray}\label{eq:cn12}\\\nonumber
\Big|u^q-v^q \Big | &\leq&C \Big | v^{q-\frac 12}(\sqrt u- \sqrt v)+ \xi^{2q-2}(\sqrt u - \sqrt v )^2 \Big |\\ \nonumber
&=&C\Big |v^{q-\frac 12}\frac{(u-v)^+}{(\sqrt u + \sqrt v)}+ \xi^{2q-2}\frac{[(u-v)^+]^2}{(\sqrt u + \sqrt v)^2}\Big|\\\nonumber
&\leq&C v^{q-1}(u-v)^++C[(u-v)^+]^{p-1}\frac{u^{q-1}[(u-v)^+]^{3-p}}{(\sqrt u + \sqrt v)^{2-2(3-p)}(\sqrt u + \sqrt v)^{2(3-p)}}\\\nonumber
&\leq&C v^{q-1}(u-v)^++Cu^{q+1-p}[(u-v)^+]^{p-1}.
\end{eqnarray}
for some positive constant   $C=C(q)$. In the last line of \eqref{eq:cn12} we used that, by a straightforward calculation,  one has
$$ \frac{u^{q-1}}{(\sqrt u + \sqrt v)^{2-2(3-p)}}\frac{[(u-v)^+]^{3-p}}{(\sqrt u + \sqrt v)^{2(3-p)}}\leq C u^{q+1-p}$$
with $C=C(||u||_{\infty})$ and $q>p-1$, recalling that $2<p<3$.

\

\noindent By \eqref{eq:cn12} and recalling that $\frac{f(s)-f(t)}{s^q-t^q}\leq \tilde A$ by condition ($f_1$) for $s> t$, the term $I_2$ can be estimated as follows:
\begin{eqnarray}\label{eq:cn69}\\\nonumber
I_2&\leq& C \left(\int_{\mathcal{C}(2R)}v^{q-1}[(u-v)^+]^{\alpha+1}dx+\int_{\mathcal{C}(2R)}[(u-v)^+]^{\alpha+p-1}dx\right)\\\nonumber
&\leq&C\left(\sum_{i=1}^{\overline{N}}\int_{\mathcal{C}(2R)\cap Q_i}v^{q-1}[(u-v)^+]^{\alpha+1}dx+
\sum_{i=1}^{\overline{N}}\int_{\mathcal{C}(2R)\cap Q_i}[(u-v)^+]^{\alpha+p-1}dx\right),
 \end{eqnarray}
for some positive constant $C=C(q, ||u||_{L_\infty})$.

\

\noindent Following  exactly the same calculations used for the term $I_{2}^a$ in \eqref{eq:I_2a}, we estimate the first integral on the right of  \eqref{eq:cn69} as follows
\begin{eqnarray}\label{eq:sup1}
&&
\sum_{i=1}^{\overline{N}}\int_{\mathcal{C}(2R)\cap Q_i}v^{q-1}[(u-v)^+]^{\alpha+1}dx\\\nonumber
&\leq& C(d_Q)\int_{\mathcal{C}(2R)}(|\nabla u|+|\nabla v|)^{p-2}|\nabla(u-v)^+|^2[(u-v)^+]^{\alpha-1}
\end{eqnarray}
with
$$
C(d_Q)\rightarrow0  \qquad \text{if } \quad d_Q \rightarrow 0.
$$The second integral on the right of  \eqref{eq:cn69} states as
\begin{eqnarray}\label{eq:cn13tribiss}
&&\sum_{i=1}^{\overline{N}}\int_{\mathcal{C}(2R)\cap Q_i}\left( [(u-v)^+]^\frac{\alpha +p -1}{p}\right )^pdx
\\\nonumber&\leq& \sum_{i=1}^{\overline{N}}C_p(d_Q)\int_{\mathcal{C}(2R)\cap Q_i}[(u-v)^+]^{\alpha-1}|\nabla(u-v)^+|^p\\\nonumber&\leq&
C_p(d_Q)\int_{\mathcal{C}(2R)}(|\nabla u|+|\nabla v|)^{p-2}|\nabla(u-v)^+|^2[(u-v)^+]^{\alpha-1}
\end{eqnarray}
where we used equation~\eqref{eq:sup2} and  Poincar\'e inequality in $W^{1,p}(Q_i)$ with $C_p(d_Q)\rightarrow 0$ if $d_Q\rightarrow0 $.

\

\noindent Then, in the case $p-1<q<2$ by \eqref{eq:cn69},  \eqref{eq:sup1} and \eqref{eq:cn13tribiss} for $I_2$ we have
\begin{equation}\label{eq:cn70}
I_2\leq C_{3,c}(d_Q)\int_{\mathcal{C}(2R)}(|\nabla u|+|\nabla v|)^{p-2}|\nabla(u-v)^+|^2[(u-v)^+]^{\alpha-1}dx,
\end{equation}
for some
\begin{equation}\label{eq:C_3_3}
C_{3,c}(d_Q)\rightarrow0  \qquad \text{if } \quad d_Q \rightarrow 0.
\end{equation}
Then, for any $q>(p-1)$, by equations \eqref{eq:cn8bis}, \eqref{eq:cn10}, \eqref{eq:cn70}, from \eqref{eq:cn8}
we get
\begin{equation}\label{2I2}
I_2 \leq C_3(d_Q)\int_{\mathcal{C}(2R)}(|\nabla u|+|\nabla v|)^{p-2}|\nabla(u-v)^+|^2[(u-v)^+]^{\alpha-1},
 \end{equation}
where
$$C_3(d_Q)=2\max\{C_{3,a}(d_Q),\,\,C_{3,b}(d_Q),\,\,C_{3,c}(d_Q)\}$$
and moreover, from equations \eqref{eq:C_3_1}, \eqref{eq:C_3_2} and \eqref{eq:C_3_3} one has
\begin{equation}\label{eq:C_3}
C_{3}(d_Q)\rightarrow0  \qquad \text{if } \quad d_Q \rightarrow 0.
\end{equation}

\

\emph{ Step 3: Passing to the limit and concluding  the proof.} \\
\noindent From equations \eqref{eq:cn33}, \eqref{eq:1I1} and \eqref{2I2} we obtain
\begin{eqnarray}\label{eq:L1}
&&\bar \alpha \dot  C\int_{\mathcal{C}(2R)}(|\nabla u|+|\nabla v|)^{p-2}|\nabla(u-v)^+|^2[(u-v)^+]^{\alpha-1}\varphi^2\\\nonumber
&\leq&\delta' \check C\int_{\mathcal{C}(2R)}(|\nabla u|+|\nabla v|)^{p-2}|\nabla(u-v)^+|^2
\varphi^2[(u-v)^+]^{\alpha-1}\\\nonumber
&+&\frac{C_1 (\Omega_1^i,d_Q,\delta')}{R}\int_{\mathcal{C}(2R)}(|\nabla u|+|\nabla v|)^{p-2 }|\nabla[(u-v)^+|^2[(u-v)^+]^{\alpha -1}+\frac{C_2(\Omega_2^i,d_Q,\delta')}{R} \\\nonumber&+& C_3(d_Q)\int_{\mathcal{C}(2R)}(|\nabla u|+|\nabla v|)^{p-2}|\nabla(u-v)^+|^2[(u-v)^+]^{\alpha-1}.
\end{eqnarray}
Let us choose $\delta'$ small  in \eqref{eq:L1}, say $\bar{\delta'}$ such that
\begin{itemize}
\item[$(i)$]$\tilde C=\bar \alpha \dot C - \bar{\delta'} \check C>0$\,.
\end{itemize}
Also let $R$ sufficiently large and  $d_Q$ sufficiently small  such that
\begin{itemize}
\item[$(ii)$] $\displaystyle \theta=\frac{1}{\tilde C}\left(\frac{C_1 (\Omega_1^i,d_Q,\delta')}{R}+C_3(d_Q)\right)< 2^{-N}$.
\end{itemize}
Let us set
\begin{equation}\nonumber
 \mathcal {L}(R)\,:=\,\int_{\mathcal{C}(R)}(|\nabla u|+|\nabla v|)^{p-2}|\nabla(u-v)^+|^2[(u-v)^+]^{\alpha-1}
\end{equation}
and
$$ g(R)=\frac{C_2(\Omega_2^i,\bar{d_Q},\bar{\delta'})}{\tilde C R}.$$
Then, since $u,\nabla u, v,\nabla v \in L^{\infty}(\Sigma_{(\lambda, \beta)})$, by \eqref{eq:L1}, one has
$$
\begin{cases}
\mathcal{L}(R)\leq \theta \mathcal{L}(2R)+g(R) & \forall R>0,\\
\mathcal{L}(R)\leq CR^{N} & \forall R >0,
\end{cases}
$$
and  from Lemma \ref{Le:L(R)} with $\nu=N$, since $\theta $ can by taken such that $\theta<2^{-N}$,  we get $$\mathcal{L}(R)\equiv0$$ and consequently the thesis, in the case when ($H_1$) is assumed.\\

Let us now consider the more simple case  when $\lambda>\underline{\lambda}>0$ and
 $v\geq \underline{v}>0$ in $\Sigma_{(\lambda-2\bar\delta, \beta+2\bar\delta)}$.
 In this case the constant in \eqref{condcrucial} is uniformly bounded  and \eqref{condcrucial} is:
 \begin{equation}\label{condcrucialttt}
\begin{split}
&\int_{\mathcal{K}(x'_0)} \frac{1}{|\nabla v|^{\tau}}\frac{1}{|x-y|^\gamma}\leq C\,.
\end{split}
\end{equation}
 Consequently the weighted Poinver\'{e} constant  provided by Corollary \ref{cor:PoincarŽ}, are also uniformly bounded. Therefore,
 the proof used in the previous case (when ($H_1$) is assumed) can be repeated verbatim. The fact that the weighted Poinver\'{e} constants  provided by Corollary \ref{cor:PoincarŽ} are uniformly bounded, allows to get
  \eqref{gfgfdtrscvbnmzzxs} for any $p>2$. Also the reader will guess that we only need in this case to
   estimate the term $\frac{f(u)-f(v)}{u-v}$  by a constant, and the assumption that $f$ is locally Lipschitz continuous is enough.

\begin{corollary}\label{th:wcpstripbg}
Let   $u \in C^{1, \alpha}_{loc}$ be a solution to \eqref{E:P} and assume that ($H_1$)  hold.
Let as above $\Sigma_{(\lambda, \beta)}:= \left\{ \mathbb{R}^{N-1}\times [\lambda, \beta]\right \}$ with $0\leq\lambda<\beta$.
Assume that $|\nabla u|$ is bounded and define $u_\beta$ to be the reflection of $u$ (w.r.t. the hyperplane $\{y=\beta\}$) defined by:
\[
u_\beta(x',y)\,:=\, u(x',2\beta-y)\,.
\]
Then there exists $d_0=d_0(p,u, f,N)>0$ such that, if  $0~<~(\beta-\lambda)<~ d_0$ and $ u \leq u_\beta$ on $\partial\Sigma_{(\lambda, \beta)}$, then it follows that
$$ u \leq u_\beta \qquad \text{ in } \Sigma_{(\lambda, \beta)}.$$

\noindent The same conclusion holds assuming $\Sigma_{(\lambda', \beta')}\subseteq \Sigma_{(0, \beta)}$, $ u \leq u_\beta$ on $\partial\Sigma_{(\lambda', \beta')}$ and $(\beta'-\lambda')$ sufficiently small (say $(\beta'-\lambda ')\leq d_0$).

\end{corollary}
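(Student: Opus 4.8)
The plan is to deduce Corollary~\ref{th:wcpstripbg} directly from Theorem~\ref{th:wcpstrip} applied with the choice $v:=u_\beta$. All that has to be done is to check that this $v$ satisfies the hypotheses of that theorem, and among them the only non-trivial one is the integral bound~\eqref{condcrucial}, which will be furnished by Proposition~\ref{pr:gradbis} applied to $u$ via the reflection $\Phi$.

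If $u\equiv 0$ there is nothing to prove, so assume $u\not\equiv 0$, whence $u>0$ in $\mathbb{R}^N_+$ by Theorem~\ref{semihop}. Fix $\bar\delta\in(0,\beta/2)$. The reflection $\Phi(x',y):=(x',2\beta-y)$ is an isometry of $\mathbb{R}^N$ that leaves $-\Delta_p$ invariant and, since $\bar\delta<\beta/2$, maps $\Sigma_{(\lambda-2\bar\delta,\beta+2\bar\delta)}$ into the open half-space; therefore $v=u_\beta=u\circ\Phi$ is a positive $C^{1,\alpha}_{loc}$ solution of $-\Delta_p v=f(v)$ in $\Sigma_{(\lambda-2\bar\delta,\beta+2\bar\delta)}$ with $v,\nabla v\in L^\infty$, and $u\le v$ on $\partial\Sigma_{(\lambda,\beta)}$ holds by hypothesis (it is an equality on $\{y=\beta\}$). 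It remains to verify~\eqref{condcrucial} for $v=u_\beta$.

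Fix $x'_0$ and admissible exponents $\gamma$ (close to $N-2$, or $\gamma=0$ if $N=2$) and $\tau\in[\max\{p-2,0\},p-1)$. Using $\lambda\ge0$, the cylinder $\mathcal{K}(x'_0)=B_{(\beta-\lambda)\sqrt N}(x'_0)\times(\lambda,\beta)$ is carried by $\Phi$ into $B_{(\beta-\lambda)\sqrt N}(x'_0)\times(\beta,2\beta-\lambda)$, which is contained in $\widetilde{\mathcal K}(x'_0):=B_{\beta\sqrt N}(x'_0)\times(\beta,2\beta)$, the cylinder appearing in Proposition~\ref{pr:gradbis}. Since $\Phi$ is an isometry of unit Jacobian and $|\nabla v|=|\nabla u|\circ\Phi$, the substitution $x\mapsto\Phi(x)$ gives
\begin{equation}\nonumber
\int_{\mathcal{K}(x'_0)}\frac{1}{|\nabla v|^{\tau}}\frac{1}{|x-y|^{\gamma}}
=\int_{\Phi(\mathcal{K}(x'_0))}\frac{1}{|\nabla u|^{\tau}}\frac{1}{|x-\Phi(y)|^{\gamma}}
\le\int_{\widetilde{\mathcal K}(x'_0)}\frac{1}{|\nabla u|^{\tau}}\frac{1}{|x-\Phi(y)|^{\gamma}}\,,
\end{equation}
and Proposition~\ref{pr:gradbis} bounds the right-hand side by $C\,\beta^{(N+\tau-2p-\gamma)}u_0^{2p-2q-2-\tau}$ with $u_0=u(x'_0,\tfrac{3}{2}\beta)$. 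Since $\lambda<\beta$, the point $\Phi(x'_0,\tfrac{\beta+\lambda}{2})=(x'_0,\tfrac{3\beta-\lambda}{2})$ lies in $\widetilde{\mathcal K}(x'_0)$, so by Remark~\ref{ghhghgjfjfjbisrem} we may replace $u_0$ by $u_\beta(x'_0,\tfrac{\beta+\lambda}{2})=v_0$ at the price of a Harnack factor absorbed into $C$; this is exactly~\eqref{condcrucial}. Theorem~\ref{th:wcpstrip} then yields $u\le u_\beta$ in $\Sigma_{(\lambda,\beta)}$ provided $\beta-\lambda<d_0$, with $d_0=d_0(p,u,f,N)$ as in Theorem~\ref{th:wcpstrip}.

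The sub-strip statement is obtained by the same argument run on $\Sigma_{(\lambda',\beta')}\subseteq\Sigma_{(0,\beta)}$, still reflecting across $\{y=\beta\}$: now $\Phi$ sends $B_{(\beta'-\lambda')\sqrt N}(x'_0)\times(\lambda',\beta')$ into $B_{(\beta'-\lambda')\sqrt N}(x'_0)\times(2\beta-\beta',2\beta-\lambda')\subseteq\widetilde{\mathcal K}(x'_0)$ (using $0\le\lambda'<\beta'\le\beta$), and since for the admissible $\gamma,\tau$ the exponent $N+\tau-2p-\gamma$ is negative (it is $\approx 1-p<0$), one has $\beta^{(N+\tau-2p-\gamma)}\le\beta'^{(N+\tau-2p-\gamma)}$; combined with Remark~\ref{ghhghgjfjfjbisrem} applied at $\Phi(x'_0,\tfrac{\beta'+\lambda'}{2})\in\widetilde{\mathcal K}(x'_0)$, this reproduces~\eqref{condcrucial} with $\beta'$ in place of $\beta$, and Theorem~\ref{th:wcpstrip} closes the argument once $\beta'-\lambda'$ is small enough. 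The whole proof is essentially bookkeeping; the one point that must be handled with care is the inclusion $\Phi(\mathcal{K}(x'_0))\subseteq\widetilde{\mathcal K}(x'_0)$ — matching both the radius of the $(N-1)$-dimensional ball and the vertical range — and checking that the powers of $\beta$ and of $v_0$ produced by Proposition~\ref{pr:gradbis} and Remark~\ref{ghhghgjfjfjbisrem} coincide with those appearing in~\eqref{condcrucial}.
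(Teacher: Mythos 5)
Your proof is correct and follows exactly the paper's route: the paper's own argument is just to apply Theorem \ref{th:wcpstrip} with $v\equiv u_\beta$, with \eqref{condcrucial} supplied by Proposition \ref{pr:gradbis} together with Remark \ref{ghhghgjfjfjbisrem}. Your write-up simply makes explicit the bookkeeping (the inclusion $\Phi(\mathcal{K}(x'_0))\subseteq B_{\beta\sqrt N}(x'_0)\times(\beta,2\beta)$, the change of base point via Harnack, and the sign of the exponent of $\beta$ for the sub-strip case) that the paper leaves implicit.
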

\begin{proof}
We apply Theorem \ref{th:wcpstrip} to $u$ and $v\equiv u_\beta$, so that the condition expressed by \eqref{condcrucial}, turns to be satisfied thanks to Proposition \ref{ghhghgjfjfjbis} (see also Remark \ref{ghhghgjfjfjbisrem}).
\end{proof}

\section{Proof of Theorem \ref{mainthm}, Theorem \ref{mainthmfdfdfdfdf} and Theorem \ref{liouvillenextgenerationtris}}\label{sec666}

\noindent \emph{Proof of Theorem \ref{mainthm}}.\\
\noindent The proof of Theorem \ref{mainthm}  follows directly by  Theorem \ref{th:wcpstrip} (exactly the version given by Corollary \ref{th:wcpstripbg}), and repeating verbatim the proof of Theorem 1.3 in \cite{FMS},
by replacing the application of Theorem 1.1 in \cite{FMS} with the application of Theorem \ref{th:wcpstrip} (see Corollary \ref{th:wcpstripbg}) proved here.

We only remark that, doing this,  Theorem \ref{th:wcpstrip} has to be exploited in strips
$$\Sigma_{(0, \beta)}:= \left\{ \mathbb{R}^{N-1}\times [0, \beta]\right \},$$
where the solution $u$ is bounded since we assumed that $|\n u| \in L^{\infty}(\mathbb{R}^N_+)$ and taking into account the Dirichlet assumption.
This allows to exploit Theorem \ref{th:wcpstrip}.

Theorem \ref{th:wcpstrip} therefore applies and leads to the proof of the first part of Theorem \ref{mainthm}, that is
\[
\frac{\partial u}{\partial x_N}\,>\,0 \quad \text{in}\quad \mathbb{R}^N_+.
 \]
It follows now that $u$ has no critical points and therefore $u \in C^{2}_{loc}({\overline {{\mathbb{R}^N_+}}})$ by standard regularity theory, since
the $p$-Laplace operator is non-degenerate outside the critical points of the solution.\\

Let us now assume that $u$ is bounded and that  $N=3$ (the case $N=2$ is analogous and  has been already considered in \cite{DS3})  and let us show that  $u$ has one-dimensional symmetry with $u(x',x_N)=u(x_N)$.
We exploit  some arguments used in \cite{FSV} to which we refer for more details.
For any~$(x_1,x_2,y)\in\R^3$ and~$t\in\mathbb{R}$,
we define
\begin{equation}\label{gfgfvsvbodddd}
 u^\star(x_1,x_2,y):=\left\{
\begin{matrix}
u(x_1,x_2,y) & {\mbox{ if $y\geq 0$,}}\\
-u(x_1,x_2,-y) & {\mbox{ if $y\leq 0$}},
\end{matrix}
\right.
\end{equation}
and
$$ f^\star(t):=\left\{
\begin{matrix}
f(t) & {\mbox{ if $t\geq 0$,}}\\
-f(-t) & {\mbox{ if $t\leq 0$.}}
\end{matrix}
\right.$$
It follows, taking into account that $f(0)=0$, that
\begin{equation}\label{BequaU}
-\Delta_p u^\star=f^\star(u^\star)\qquad \text{in}\,\,\,\, \mathbb{R}^N.
\end{equation}
Moreover   $u^\star$ is monotone with $u^\star_y>0$ by construction. The conclusion follows therefore by
the $1$-D results in \cite{FSV,FSV2}. In particular by Theorem 1.1 and Theorem 1.2    in \cite{FSV} it follows that $u^\star$ (and therefore $u$) is one dimensional.\\

%is a monotone (and stable) solution of~\eqref{E:P} in $\R^N$. Consequently, according to \cite{FSV,FSV2}, $u$ depends only on one variable (the $x_n$-variable), and a consequently ODE analysis
% gives the thesis.\\

% If else $N\geqslant 3$, and $(f_1)$ holds with
%\[
%p-1\leq q< \frac{(N-1)p}{(N-1)-p}
%\]
%then,  since $ \frac{\partial u}{\partial x_N}\,>\,0 \quad \text{in}\quad \mathbb{R}^N_+$ by Theorem \ref{mainthm} and
%$u$ is bounded, we can define
%\begin{equation}\begin{split}\label{w77w8ss88qq}
%& w(x')
%:=\lim_{t\rightarrow \infty} u(x',y+t).
%\end{split}\end{equation}
%so that, see for example \cite{FSV}, we have
%\begin{equation}\label{a766321-2}
%-\Delta_p w=f(w)\qquad \text{in} \qquad \R^{N-1}\,.
%\end{equation}
%By  \cite[Theorem III pag. 84]{SZ}, it follows $w=0$. This shows that actually $u=0$.

\noindent \emph{Proof of Theorem \ref{mainthmfdfdfdfdf}}.\\
\noindent Taking into account \eqref{gfgfgsdhjsjbvbbvbhcnncndnndhuhu}, we can apply Lemma \ref{vdcvs0987654}
to get that
\begin{equation}
\frac{\partial u}{\partial y}\geq \underline{u'}_\theta>0\qquad \text{in}\quad \Sigma_{(0,\theta)}\,,
\end{equation}
for some $\underline{u'}_\theta\,,\,\theta >0$ and $ \Sigma_{(0,\theta)}=\left\{ (x',y): x'\in \mathbb{R}^{N-1}, y\in[0,\theta]\right\}$.\\
Consequently $u(x',y)<u_{\frac{\theta}{2}}(x',y)=u(x',\theta-y) $  in $\Sigma_{(0,\frac{\theta}{2})}$ and the moving plane procedure can be started.
To conclude it is needed to repeat the proof of Theorem 1.3 in \cite{FMS}. In this case, since we already started the moving plane procedure, we only have to exploit
the weak comparison principle in narrow domains (Theorem \ref{th:wcpstrip}) far from the boundary, with $v=u_\beta$ the reflection of $u$ w.r.t.
the hyperplane $\{y=\beta\}$.
It is important now to remark that by Lemma \ref{vdcvs0987654} (see also Remark \ref{vdcvs0987654b}) $v=u_\beta$ is uniformly bounded away from zero far from the boundary and  we can exploit the second part of the statement of Theorem \ref{th:wcpstrip} which allows the result to hold for  $f$  positive($f(s)>0$ for $s>0$) and locally Lipschitz continuous and for any $p>2$.\\
If ($H_2$) holds, we can exploit Lemma \ref{le:cuccurucucu} to deduce  \eqref{gfgfgsdhjsjbvbbvbhcnncndnndhuhu} and the thesis by the above arguments.\\

\noindent \emph{Proof of Theorem \ref{liouvillenextgenerationtris}}.\\
\noindent The proof of the first part of Theorem \ref{liouvillenextgenerationtris} (that is $u=0$ if $q<q_c(N,p)$)  follows directly by Proposition 2.3 in \cite{DFSV}, recalling that monotone solutions are also stable solutions. Equivalently we can also apply Theorem 1.5 in \cite{DFSV}
if we assume that $u$ is defined in the whole space by odd reflection as in \eqref{gfgfvsvbodddd}. \\

\noindent To prove the second part of Theorem \ref{liouvillenextgenerationtris} (that is $u=0$ if $q<q_c((N-1),p)$) we argue as in Theorem 12 of \cite{Fa2} and we assume by contradiction that $u$ is not identically zero. Therefore, $u>0$ in $\mathbb{R}^N_+$ by the strong maximum principle \cite{V}. Also for simplicity we assume that $u$ is defined in the whole space by odd reflection as in \eqref{gfgfvsvbodddd}. \\
Consequently we can exploit  Theorem \ref{mainthm}  and get that $u$ is monotone increasing with $\frac{\partial u}{\partial x_N}\,>\,0 $ in $\mathbb{R}^N_+$.
Since $u$ is bounded by assumption in this case, we can  define
\begin{equation}\begin{split}\label{w77w8ss88qq}
& w(x')
:=\lim_{t\rightarrow \infty} u(x',y+t).
\end{split}\end{equation}
The limit in
\eqref{w77w8ss88qq}
holds in
$C^{1}_{\rm loc}(\R^{N-1})$
and $w$
is a bounded
weak solution of
\begin{equation}\label{a766321-2}
-\Delta_p w=w^q\qquad \text{in} \qquad \R^{N-1},
\end{equation}
see for example \cite{FSV}.
Here below we will show that $w$ is \emph{stable}  so that
the thesis $u=0$ will follow  by Theorem 1.5 in \cite{DFSV} applied in $\mathbb{R}^{N-1}$.\\

Let us therefore show that $w$ is stable, that is
\begin{equation}\label{linearizedgeneral}
\begin{split}
  L_w(\phi,\phi)&=\int_\Omega |\nabla w|^{p-2}|\nabla\phi|^2+(p-2)\int_\Omega |\nabla w|^{p-4}(\nabla
w,\nabla\phi)^2 - \int_\Omega q\,w^{q-1}\phi^2 \geq 0
\end{split}
\end{equation}
for any $\phi\in C^\infty_c(\mathbb{R}^{N-1})$. We set
\[
u^t(x',y)\,:=\, u(x',y+t).
\]
Since $u$ is monotone so does $u^t$ for any $t\in\mathbb{R}$, consequently  $u^t$  is also stable for any $t\in\mathbb{R}$ (see \cite{DFSV}), and therefore $L_{u^t}(\varphi,\varphi)\geq 0$
for any $\varphi\in C^\infty_c(\mathbb{R}^{N})$. \\

We  then take~$\varphi:= \varphi_1 (x')
\varphi_2(y)$, with~$\varphi_1\in C^\infty_c(B'_R)$ where
$B'_R$ is the ball of radius $R$ in $\mathbb{R}^{N-1}$ centered at zero,
and~$\varphi_2:\R\rightarrow\R$
of the form~$\varphi_2(y):=\sqrt\mu \tau(\mu y)$, where~$\mu>0$ is a small parameter,~$\tau
\in C^\infty_0(\R)$ and
$ \int_\R \tau^2 (y) dx_N= 1\,$, so
 that
\begin{equation}\label{ASahay7q7q8Saq1p2}
\int_\R \varphi_2^2 (y) dx_N= 1\,.
\end{equation}
The stability condition for $u^t$ reads as
\begin{equation}\label{GYIHBHGCjghvchvxj}
 L_{u^t}(\varphi_1 (x')
\varphi_2(y),\varphi_1 (x')
\varphi_2(y))\geq 0\,.
\end{equation}
Note now that $q>p-1$ and $p>2$ implies that $qs^{q-1}$ is $C^1$.  \\
\noindent This and the assumption $p>2$, together with the fact that
the limit in
\eqref{w77w8ss88qq}
holds in
$C^{1}_{\rm loc}(\R^{N-1})$, gives that $\big||\nabla u|^{p-2}-|\nabla w|^{p-2}\big|$ and $\big|u^{q-1}-w^{q-1}\big|$
are uniformly small in the cylinder  $B'_R\times supp \, (\varphi_2)$ for $t=t(\mu)$ large. Consequently by the stability condition \eqref{GYIHBHGCjghvchvxj} and some elementary calculations we get
\[
0\leq L_u(\varphi_1 (x')
\varphi_2(y)\,,\,\varphi_1 (x')
\varphi_2(y))= L_w(\varphi_1\,,\,\varphi_1)\,+\, r(\mu\,,\,t)\,,
\]
where $r(\mu\,,\,t)$ can be taken arbitrary small for $\mu $ small and  $t=t(\mu)$ large. This shows that  $L_w(\varphi_1\,,\,\varphi_1)\geq 0$ and and the stability of $w$ in $\mathbb{R}^{N-1}$.\\

\noindent Let us now prove the last part of the thesis, and assume that  $p>2$, with $f(\cdot)$ satisfying ($f_2$). We deduce again by Theorem \ref{mainthm}  that $u$ is monotone increasing with $\frac{\partial u}{\partial x_N}\,>\,0$ in $\mathbb{R}^N_+$. We therefore define $w$ as above and get the thesis by the fact that $w=0$ by \cite{MP}.

\

\noindent{\bf Acknowledgements.}
The authors would like to thank the anonymous referees for their useful comments and suggestions.

\bigskip

\end{document}